\documentclass[12pt,dvips]{amsart}
  \usepackage{amsfonts, amssymb, latexsym, epsfig}
  \usepackage{euler, amsfonts, amssymb, latexsym, epsfig, epic}
  
  \setlength{\oddsidemargin}{0in}
  \setlength{\evensidemargin}{0in}
  \setlength{\marginparwidth}{0in}
  \setlength{\marginparsep}{0in}
  \setlength{\marginparpush}{0in}
  \setlength{\topmargin}{0in}
  \setlength{\headheight}{0pt}
  \setlength{\headsep}{0pt}
  \setlength{\footskip}{.3in}
  \setlength{\textheight}{9.2in}
  \setlength{\textwidth}{6.5in}
  \setlength{\parskip}{4pt}

  \theoremstyle{plain}
    \newtheorem{theorem}{Theorem}[section]

  \newtheorem*{nonnegativityconjecture}{Nonnegativity conjecture}
  \newtheorem*{upperconjecture}{Upper semicontinuity conjecture}

  \newtheorem{proposition}[theorem]{Proposition}
  
  \newtheorem{lemma}[theorem]{Lemma}
  
\newtheorem{defprop}[theorem]{Definition--Proposition}
  
  \newtheorem{conjecture}[theorem]{Conjecture}
  \newtheorem{problem}[theorem]{Problem}
  
\theoremstyle{definition}
  \newtheorem{definition}[theorem]{Definition}
  \newtheorem{example}[theorem]{Example}
  
  \newtheorem{question}[theorem]{Question}

 \theoremstyle{remark}
  \newtheorem{remark}[theorem]{Remark}
\numberwithin{equation}{section}

\def\Ess{{\mathcal{E}}}

\def\ex{{\rm ex}}
\def\ey{{\rm depth}}
\def\sat{{\rm sat}}

\newcommand{\excise}[1]{}

%

\newcommand{\cellsize}{15}
\newlength{\cellsz} \setlength{\cellsz}{\cellsize\unitlength}
\newsavebox{\cell}
\sbox{\cell}{\begin{picture}(\cellsize,\cellsize)
\put(0,0){\line(1,0){\cellsize}}
\put(0,0){\line(0,1){\cellsize}}
\put(\cellsize,0){\line(0,1){\cellsize}}
\put(0,\cellsize){\line(1,0){\cellsize}}
\end{picture}}
\newcommand\cellify[1]{\def\thearg{#1}\def\nothing{}%
\ifx\thearg\nothing
\vrule width0pt height\cellsz depth0pt\else
\hbox to 0pt{\usebox{\cell} \hss}\fi%
\vbox to \cellsz{
\vss
\hbox to \cellsz{\hss$#1$\hss}
\vss}}
\newcommand\tableau[1]{\vtop{\let\\\cr
\baselineskip -16000pt \lineskiplimit 16000pt \lineskip 0pt
\ialign{&\cellify{##}\cr#1\crcr}}}
%

\newcommand{\kellsize}{18}
\newlength{\kellsz} \setlength{\kellsz}{\kellsize\unitlength}
\newsavebox{\kell}
\sbox{\kell}{\begin{picture}(\kellsize,\kellsize)
\put(0,0){\line(1,0){\kellsize}}
\put(0,0){\line(0,1){\kellsize}}
\put(\kellsize,0){\line(0,1){\kellsize}}
\put(0,\kellsize){\line(1,0){\kellsize}}
\end{picture}}
\newcommand\kellify[1]{\def\thearg{#1}\def\nothing{}%
\ifx\thearg\nothing
\vrule width0pt height\kellsz depth0pt\else
\hbox to 0pt{\usebox{\kell} \hss}\fi%
\vbox to \kellsz{
\vss
\hbox to \kellsz{\hss$#1$\hss}
\vss}}
\newcommand\ktableau[1]{\vtop{\let\\\cr
\baselineskip -16000pt \lineskiplimit 16000pt \lineskip 0pt
\ialign{&\kellify{##}\cr#1\crcr}}}
%

\font\co=lcircle10

\def\jr{\smash{\raise2pt\hbox{\co \rlap{\rlap{\char'005} \char'007}}
               \raise6pt\hbox{\rlap{\vrule height5pt}}
               \raise2pt\hbox{\rlap{\hskip4pt \vrule height0.4pt depth0pt
                width5.7pt}}
               \raise2pt\hbox{\rlap{\hskip-9.5pt \vrule height.4pt depth0pt
                width6.2pt}}
               \lower6pt\hbox{\rlap{\vrule height4.5pt}}}}
\def\rj{\smash{\raise2pt\hbox{\co \rlap{\rlap{\char'004} \char'006}}
               \raise6pt\hbox{\rlap{\vrule height5pt}}
               \raise2pt\hbox{\rlap{\hskip4pt \vrule height0.4pt depth0pt
                width5.7pt}}
               \raise2pt\hbox{\rlap{\hskip-9.5pt \vrule height.4pt depth0pt
                width6.2pt}}
               \lower6pt\hbox{\rlap{\vrule height4.5pt}}}}
\def\je{\smash{\raise2pt\hbox{\co \rlap{\rlap{\char'005}
                \phantom{\char'007}}}\raise6pt\hbox{\rlap{\vrule height5pt}}
               \raise2pt\hbox{\rlap{\hskip-9.5pt \vrule height.4pt depth0pt
                width6.2pt}}}}
\def\ej{\smash{\raise2pt\hbox{\co \rlap{\rlap{\char'004}\phantom{\char'006}}}
               \raise2pt\hbox{\rlap{\hskip-9.5pt \vrule height.4pt depth0pt
                width6.2pt}}
               \lower6pt\hbox{\rlap{\vrule height4.5pt}}}}
\def\er{\smash{\raise2pt\hbox{\co \rlap{\rlap{\phantom{\char'005}} \char'007}}
               \raise2pt\hbox{\rlap{\hskip4pt \vrule height0.4pt depth0pt
                width5.7pt}}
               \lower6pt\hbox{\rlap{\vrule height4.5pt}}}}
\def\re{\smash{\raise2pt\hbox{\co \rlap{\rlap{\phantom{\char'004}} \char'006}}
               \raise6pt\hbox{\rlap{\vrule height5pt}}
               \raise2pt\hbox{\rlap{\hskip4pt \vrule height0.4pt depth0pt
                width5.7pt}}}}
\def\+{\smash{\lower6pt\hbox{\rlap{\vrule height17pt}}
                \raise2pt
                \hbox{\rlap{\hskip-9pt \vrule height.4pt depth0pt
                width18.7pt}}}}
\def\hor{\smash{\raise2pt\hbox{\rlap{\hskip-9.5pt \vrule height.4pt depth0pt
                width19.2pt}}}}
\def\ver{\smash{\lower6pt\hbox{\rlap{\vrule height17pt}}}}
\def\ho{\smash{\hbox{\rlap{\vrule height5pt}}
                \raise2pt
                \hbox{\rlap{\hskip-9pt \vrule height.4pt depth0pt
                width18.7pt}}}}




\def\textcross{\ \smash{\lower4pt\hbox{\rlap{\hskip4.15pt\vrule height14pt}}
                \raise2.8pt\hbox{\rlap{\hskip-3pt \vrule height.4pt depth0pt
                width14.7pt}}}\hskip12.7pt}
\def\textelbow{\ \hskip.1pt\smash{\raise2.75pt%
                \hbox{\co \hskip 4.15pt\rlap{\rlap{\char'004} \char'006}
                \lower6.8pt\rlap{\vrule height3.5pt}
                \raise3.6pt\rlap{\vrule height3.5pt}}
                \raise2.8pt\hbox{%
                  \rlap{\hskip-7.15pt \vrule height.4pt depth0pt width3.5pt}%
                  \rlap{\hskip4.05pt \vrule height.4pt depth0pt width3.5pt}}}
                \hskip8.7pt}

\begin{document}
\pagestyle{plain}
\title{Kazhdan-Lusztig polynomials and drift configurations}
\author{Li Li}
\address{Dept. of Mathematics and Statistics\\
Oakland University\\
Rochester, Michigan 48309}
\email{li2345@oakland.edu}

\author{Alexander Yong}
\address{Dept. of Mathematics\\
University of Illinois at Urbana-Champaign\\
Urbana, IL 61801}
\email{ayong@math.uiuc.edu}
\subjclass[2000]{14M15; 05E15, 20F55}
\keywords{Kazhdan-Lusztig polynomials, Hilbert series, Schubert varieties}

\date{August 21, 2010}

\begin{abstract}
The coefficients of the Kazhdan-Lusztig polynomials $P_{v,w}(q)$
are nonnegative integers that are upper semicontinuous on Bruhat order.
Conjecturally, the same properties hold for
$h$-polynomials $H_{v,w}(q)$ of local rings of Schubert varieties. This suggests a parallel between
the two families of polynomials.
We prove our conjectures for Grassmannians, and more generally,
covexillary Schubert varieties in complete flag varieties, by deriving a combinatorial
formula for $H_{v,w}(q)$. We
introduce \emph{drift configurations} to formulate a new and compatible combinatorial
rule for $P_{v,w}(q)$. From our rules
we deduce, for these cases, the coefficient-wise inequality $P_{v,w}(q)\preceq H_{v,w}(q)$.
\end{abstract}

\maketitle
\vspace{-.1in}

\section{Introduction}

\subsection{Overview}
This paper studies two families of polynomials
$\{P_{v,w}(q)\}$ and $\{H_{v,w}(q)\}$ defined for pairs of permutations
$v,w$ in the symmetric group $S_n$ (or more generally, any Weyl group $W$). The former family consists of
the celebrated \emph{Kazhdan-Lusztig polynomials},
which were introduced in \cite{Kazhdan.Lusztig}
to study representations of Hecke algebras.
There it was
conjectured that $P_{v,w}(q)\in {\mathbb Z}_{\geq 0}[q]$. This was later established \cite{KL:proof}
by interpreting $P_{v,w}(q)$ as the Poincar\'{e}
polynomial for Goresky-MacPherson's local intersection cohomology for the
torus fixed point $e_v$ of the Schubert variety $X_w$ in the complete flag variety ${\rm Flags}({\mathbb C}^n)$.

A key contribution to the theory is R.~Irving's theorem \cite{Irving} that the $P_{v,w}(q)$ are
{\bf upper semicontinuous}:
if $v'\leq v \leq w$
in Bruhat order, then $P_{v,w}(q)\preceq P_{v',w}(q)$, where ``$\preceq$''
means that, for each $i$,  the coefficient of $q^i$ in $P_{v,w}(q)$ is
weakly smaller than the coefficient of $q^i$ in $P_{v',w}(q)$. Thus, the Kazhdan-Lusztig polynomials are measures of the
singularities of Schubert varieties whose coefficient growth tracks
the worsening pathology of singularities as one moves along torus invariant ${\mathbb P}^{1}$'s towards the ``most singular'' point
$e_{id}\in X_w$. In particular, $P_{v,w}(q)=1$ if and
only if $e_v\in X_w$ is a (rationally) smooth point.

Conversely, the desire for insight into the combinatorics of Kazhdan-Lusztig polynomials
naturally leads to the basic problem of understanding where and how the singularities
of Schubert varieties worsen.
In view of this converse problem, the growth of any semicontinuous singularity
measure of Schubert varieties is of interest. One seeks concrete
comparisons of different measures; see, e.g., \cite{WYII} and the references therein.

Specifically, a well-studied semicontinuous measure is given by the {\bf Hilbert-Samuel
multiplicity} ${\rm mult}_{e_v}(X_w)$. However, while this
contains useful local data about $X_w$, even more is carried by the
${\mathbb Z}$-graded Hilbert series of ${\rm gr}_{\mathfrak{m}_{e_v}}{\mathcal O}_{e_v,X_w}$,
the associated graded ring of the local ring ${\mathcal O}_{e_v,X_w}$,
\[{\rm Hilb}({\rm gr}_{{\mathfrak m}_{e_v}}{\mathcal O}_{e_v,X_w},q)=\frac{H_{v,w}(q)}{(1-q)^{\ell(w)}},\]
where $\ell(w)={\rm dim}(X_w)$ is the {\bf Coxeter length} of $w$. In particular, ${\rm mult}_{e_v}(X_w)=H_{v,w}(1)$.

Conjecturally, each {\bf $h$-polynomial} $H_{v,w}(q)$ is also in ${\mathbb Z}_{\geq 0}[q]$, and moreover
is upper semicontinuous, just as is the case for Kazhdan-Lusztig polynomials.
These conjectures suggest that
the growth of the coefficients of the two families of polynomials is somehow correlated.
In this paper, we offer an examination in the Grassmannian case, and more generally in the case of
covexillary Schubert varieties inside ${\rm Flags}({\mathbb C}^n)$. There
the nonnegativity and semicontinuity conjectures are proved by deriving a new combinatorial rule for $H_{v,w}(q)$.
In addition, by introducing \emph{drift configurations} as a model for the Kazhdan-Lusztig polynomials
in these settings (after \cite{LS:KL} and \cite{Lascoux}), we
prove the inequality $P_{v,w}(q)\preceq H_{v,w}(q)$. This combinatorial discovery further indicates the link
between the two families; no alternative explanation via algebraic or geometric methods seems available at present.

Summarizing, the main thesis of this paper is that there exists
a parallel between $\{P_{v,w}(q)\}$ and $\{H_{v,w}(q)\}$. Our basis
for this perspective comes from proofs of compatible and
positive combinatorial rules for the two families of polynomials.

\subsection{Statements of the main conjecture and theorems}
Recapitulating, this paper formulates, and constructs supporting combinatorics for, the
following conjecture:
\begin{conjecture}
\label{conj:mainintro}
The $h$-polynomials $H_{v,w}(q)$ have nonnegative integral coefficients.
In addition, they are upper semicontinuous, i.e., if $v'\leq v$ in Bruhat order then
$H_{v,w}(q)\preceq H_{v',w}(q)$.
\end{conjecture}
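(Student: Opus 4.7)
The plan is to establish, at least in the covexillary case treated by this paper, a positive combinatorial formula for $H_{v,w}(q)$ from which both the nonnegativity and the upper semicontinuity statements can be read off directly; the full conjecture would remain open under this approach.

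First I would reduce to a local problem. Let $\mathcal{N}_{v,w}$ denote the Kazhdan--Lusztig variety at $e_v$ in $X_w$, realized as a closed subscheme of an affine space through restriction of matrix coordinates. The completion of $\mathcal{O}_{e_v,X_w}$ agrees with that of the standard graded coordinate ring $\mathbb{C}[\mathcal{N}_{v,w}]$ at the origin, so $H_{v,w}(q)$ is precisely the $h$-polynomial in the numerator of the Hilbert series of $\mathbb{C}[\mathcal{N}_{v,w}]$. The defining ideal $I_{v,w}$ is given by explicit rank conditions on submatrices, and for covexillary $w$ these conditions are controlled by a small essential set.

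Next I would construct a Gr\"obner degeneration of $I_{v,w}$ to a squarefree monomial ideal, choosing a term order adapted to the covexillary hypothesis. Since Hilbert series are preserved by Gr\"obner degeneration, $H_{v,w}(q)$ becomes the $h$-polynomial of the associated Stanley--Reisner complex $\Delta_{v,w}$. The facets of $\Delta_{v,w}$ should admit an explicit description in terms of concrete combinatorial objects --- tableau-like configurations analogous to the drift configurations introduced for $P_{v,w}(q)$ --- yielding a formula $H_{v,w}(q)=\sum_T q^{\mathrm{wt}(T)}$ with manifestly nonnegative integer coefficients.

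For upper semicontinuity, given a Bruhat cover $v' \lessdot v \leq w$, I would build a weight-preserving injection from the combinatorial objects indexing $H_{v,w}(q)$ into those indexing $H_{v',w}(q)$. The guiding principle is that passing from $e_v$ to the more singular point $e_{v'}$ should relax the rank conditions defining the Kazhdan--Lusztig variety, so one expects more tableaux to survive for $v'$ than for $v$; designing this injection explicitly and verifying that it respects the weight statistic is the combinatorial heart of the argument.

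The principal obstacle will be the Gr\"obner degeneration step: exhibiting a term order yielding a squarefree initial ideal for a covexillary $I_{v,w}$, and identifying the resulting facets combinatorially. The covexillary hypothesis is used essentially here --- no analogous statement is known outside this regime, which is precisely why the conjecture in full generality remains open. By comparison, constructing the semicontinuity injection is less algebraic but still demands careful combinatorial design, particularly in tracking how the weight statistic transforms under elementary Bruhat moves.
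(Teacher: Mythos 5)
Your strategy tracks the paper's at a high level---Gr\"{o}bner degeneration, a positive tableau formula, and an injection for semicontinuity---but two of the three steps hide genuine gaps that the paper spends real effort closing.

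First, you treat $\mathbb{C}[\mathcal{N}_{v,w}]$ as a standard graded ring, writing ``$H_{v,w}(q)$ is precisely the $h$-polynomial in the numerator of the Hilbert series of $\mathbb{C}[\mathcal{N}_{v,w}]$.'' For covexillary $w$ the ideal $I_{v,w}$ is in general \emph{not} homogeneous with respect to the standard grading (the paper emphasizes this as the salient difference between the Grassmannian and covexillary cases), so $\mathbb{C}[\mathcal{N}_{v,w}]$ carries no useful $\ZZ$-grading. The object to degenerate is the associated graded ring ${\rm gr}_{\mathfrak{m}_{e_v}}\mathcal{O}_{e_v,X_w}$ (equivalently the projectivized tangent cone), and one needs the fact from \cite{Li.Yong} that the chosen Gr\"{o}bner degenerations degenerate the tangent cone as well as $\mathcal{N}_{v,w}$ itself. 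Without this your reduction step is false as written.

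Second, obtaining a formula ``with manifestly nonnegative integer coefficients'' directly from the facets of $\Delta_{v,w}$ is not automatic and is not what the paper does. The degeneration plus the tableau-complex machinery of \cite{KMY} yield, via \cite{Li.Yong}, the alternating-sign expression $H_{v,w}(q)=\sum_{U\in {\rm SetSSYT}(\lambda(w),\mathbf{b})}(q-1)^{\ex(U)}$ over \emph{set-valued} tableaux. Positivity only becomes visible after the nontrivial combinatorial reformulation via the $\sat$/$\sup$ bijection: grouping set-valued tableaux over their $\sup$-fibers and using $\sum_{U\in\sup^{-1}(T)}(q-1)^{\ex(U)}=(1+(q-1))^{\ey(T)}=q^{\ey(T)}$. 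Your sketch asserts the existence of such a positive formula but does not supply the mechanism for eliminating the signs; this is the actual content of the nonnegativity proof.

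Finally, the semicontinuity argument is considerably simpler than you anticipate. There is no cover-by-cover weight-preserving injection to design: the paper shows directly that $v'\le v$ implies $r^{v'}_{\mathfrak e}\le r^v_{\mathfrak e}$ for every essential box $\mathfrak e$, hence $B(v,w)\subseteq B(v',w)$, hence the flags satisfy $\mathbf{b}(\Theta_{v,w})\le \mathbf{b}(\Theta_{v',w})$ componentwise. This gives a literal containment ${\rm SSYT}(\lambda(w),\mathbf{b}(\Theta_{v,w}))\subseteq {\rm SSYT}(\lambda(w),\mathbf{b}(\Theta_{v',w}))$, and the depth statistic $\ey(T)$ depends only on $T$, so the inclusion is weight-preserving for free. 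Your approach of building an explicit injection under a Bruhat cover and tracking the weight's transformation would likely work but is not needed and would introduce unnecessary combinatorial difficulty.
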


The nonnegativity claim would actually be immediate
if ${\rm gr}_{{\mathfrak m}_{e_v}}{\mathcal O}_{e_v,X_w}$ is Cohen-Macaulay (see Section~2.2).
However, this latter assertion seems to be a folklore conjecture. Although ${\mathcal O}_{e_v,X_w}$ is
itself Cohen-Macaulay \cite{Raman},
this property might be lost when degenerating to ${\rm gr}_{{\mathfrak m}_{e_v}}{\mathcal O}_{e_v,X_w}$. On the other hand,
the results detailed in this paper and in \cite{Li.Yong} also support the
Cohen-Macaulayness conjecture.
In particular, it would follow from the stronger claim
\cite[Conjecture~8.5]{Li.Yong} asserting the vertex decomposability of Stanley-Reisner
simplicial complexes of certain Gr\"{o}bner degenerations of Kazhdan-Lusztig varieties.

The semicontinuity claim is itself a strengthening of the nonnegativity claim
since the smoothness of $X_w$ at $e_w$ implies $H_{w,w}(q)=1$. Furthermore, although the betti numbers of ${\rm gr}_{{\mathfrak
m}_{e_v}}{\mathcal O}_{e_v,X_w}$
are semicontinuous, the coefficients of $H_{v,w}(q)$ are an involved, signed expression in terms of those numbers. Therefore, this semicontinuity phenomenon seems substantive.

The natural projection map
\[\pi: {\rm Flags}({\mathbb C}^n) \twoheadrightarrow  {\rm Gr}_{k}({\mathbb C}^n): \ \ \
(\langle 0\rangle\subset F_1\subset\cdots\subset F_k\subset\cdots\subset F_{n-1}\subset {\mathbb C}^n)
\mapsto F_k,\]
where ${\rm Gr}_{k}({\mathbb C}^n)$ is the Grassmannian of $k$-dimensional planes
in ${\mathbb C}^n$, is a fibration: local properties of torus
fixed points $e_{\mu}\in X_{\lambda}\subseteq {\rm Gr}_{k}({\mathbb C}^n)$
for Young diagrams $\lambda,\mu\subseteq k\times (n-k)$,
are equivalent to local properties of $e_v\in X_w\subseteq {\rm Flags}({\mathbb C}^n)$ where
$v,w\in S_n$ are maximal  Coxeter length representatives of $\lambda,\mu$ where
the latter are thought of as cosets of $S_n/(S_{k}\times S_{n-k})$; see, e.g.,
\cite[Example~1.2.3]{Brion}. These
$v$ and $w$ are {\bf cograssmannian}, i.e., they have a unique ascent, at position $k$: $v(k)<v(k+1)$ and $w(k)<w(k+1)$.

Lifting Grassmannian problems to ${\rm Flags}({\mathbb C}^n)$ has the advantage of allowing one
to embed them within the wider class of {\bf covexillary Schubert varieties} $X_w$, i.e., where
$w$ is $3412$-avoiding: there are no indices $i_1<i_2<i_3<i_4$ such that
$w(i_1), w(i_2), w(i_3), w(i_4)$ are in the same relative order as $3412$.
This class appears more tractable than general flag Schubert varieties
since it shares many of the same features as Grassmannian Schubert varieties. However,
there is a salient difference: Grassmannian Schubert varieties are locally defined by equations that are homogeneous
with respect to the standard grading that assigns each variable degree one. In general, this is not true in the covexillary case.
This homogeneity means that taking associated graded of the local ring essentially does nothing,
and so ${\rm gr}_{{\mathfrak m}_{e_v}}{\mathcal O}_{e_v,X_w}$ is automatically Cohen-Macaulay;
see, e.g., \cite[Section~1]{Li.Yong} and Section~2.2.

The covexillary condition has already attracted significant attention; see, e.g., \cite{Lakshmibai.Sandhya, Lascoux, Manivel, KM:annals, KMY,
KMY:crelle, Li.Yong} and the references therein.
In particular, \cite[Section~2.4]{KM:annals}
connects the condition to \emph{ladder determinantal ideals} studied in commutative algebra.
Our three main theorems below concern the covexillary setting, providing our main cases of support
towards
both our main thesis and Conjecture~\ref{conj:mainintro}.

One of our results is to prove the following link between $H_{v,w}(q)$ and $P_{v,w}(q)$:
\begin{theorem}
\label{thm:mainineq}
For $w$ covexillary, $P_{v,w}(q)\preceq H_{v,w}(q)$ and $\deg  P_{v,w}(q)=\deg  H_{v,w}(q)$.
\end{theorem}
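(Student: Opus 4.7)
The plan is to derive both assertions directly from the two combinatorial rules established earlier in the paper: the new formula expressing $H_{v,w}(q)$ as a weighted generating function over a family $\mathcal{H}_{v,w}$ of combinatorial objects, and the drift-configuration rule giving $P_{v,w}(q)$ as a generating function over drift configurations $\mathcal{D}_{v,w}$, each weighted by an appropriate ``size'' statistic. Since the abstract advertises these rules as \emph{compatible}, I expect drift configurations to embed naturally as a distinguished subclass of the $H$-objects, and I plan to exploit this directly rather than route through geometry.

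The first step is to construct a weight-preserving injection $\iota \colon \mathcal{D}_{v,w} \hookrightarrow \mathcal{H}_{v,w}$. The likely form of $\iota$ is essentially inclusion after a suitable identification: a drift configuration should be precisely an $H$-object satisfying an additional ``fully drifted'' stability condition, e.g., every element has been pushed as far as permitted before creating a collision in the ambient planar diagram. Given such an $\iota$, coefficient-wise domination $P_{v,w}(q) \preceq H_{v,w}(q)$ is immediate, since for each $i$ the number of drift configurations of weight $i$ is at most the number of $H$-objects of weight $i$.

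For the degree equality, the direction $\deg P_{v,w}(q) \leq \deg H_{v,w}(q)$ is a formal consequence of the coefficient-wise inequality. For the reverse direction, I would exhibit a drift configuration that realizes the top degree of $H_{v,w}(q)$: the natural candidate is a canonical ``maximally drifted'' $H$-object, which by its very extremality should automatically satisfy the drift stability condition and hence lie in the image of $\iota$. Matching the top-weight $H$-object with its preimage shows the leading coefficient of $P_{v,w}(q)$ is nonzero at degree $\deg H_{v,w}(q)$, giving the equality. A sanity check is to compare against the classical bound $\deg P_{v,w}(q) \leq (\ell(w)-\ell(v)-1)/2$ and the analogous expected bound for $H_{v,w}(q)$.

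The main obstacle, I expect, is not the inequality itself (which is formal once $\iota$ is in place) but pinning down the right combinatorial characterization of the image of $\iota$ and verifying that the two weight statistics agree on it. Drift configurations are defined through a local drifting rule on a planar grid (in the spirit of \cite{LS:KL} and \cite{Lascoux}), while the $H$-objects presumably arise from a Gr\"obner-style degeneration of the Kazhdan-Lusztig variety, as hinted in Section~2 and pursued in \cite{Li.Yong}. Reconciling these two origins so that weights match cell-by-cell, and so that ``fully drifted'' is the intrinsic local condition singling out $\mathcal{D}_{v,w}$ inside $\mathcal{H}_{v,w}$, will be the technical heart of the argument.
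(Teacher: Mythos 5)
Your first step matches the paper's in spirit: the paper constructs an injection $\Psi\colon{\rm drift}(v,w)\to{\rm SSYT}(\lambda(w),\mathbf{b})$ and shows in Lemma~\ref{lemma:map g} that it is well-defined, injective, and weight-preserving in the sense that ${\rm wt}(\mathcal{D})=\ey(\Psi(\mathcal{D}))$; combined with the two generating-function formulas (Theorem~\ref{thm:main}(I) for $P_{v,w}$ and Theorem~\ref{thm:Kvwrule} for $H_{v,w}$) this immediately gives $P_{v,w}(q)\preceq H_{v,w}(q)$, exactly as you anticipate. However, your proposed intrinsic characterization of the image is off the mark: the paper's Lemma~\ref{lemma:image of g} shows the image consists of those $T$ for which the local recursion $T(i,j)=\min\{T(i+1,j)-1,\,T(i-1,j+1)+1\}$ holds at every box that is not the top of a column over a special box (condition (a)), together with a southwest-northeast monotonicity of drift distances (condition (b)). This is closer to a ``minimally filled away from the special columns'' condition than to a ``fully drifted'' stability condition.

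The genuine gap is in your degree argument. You hope that a tableau of maximum depth is \emph{automatically} in the image of $\Psi$ ``by its very extremality,'' but you give no mechanism for this, and it is not obvious (nor does the paper prove it): a maximal-depth $T_0\in{\rm SSYT}(\lambda(w),\mathbf{b})$ may well violate conditions (a) or (b). The paper's actual device is an iterated augmentation: starting from any $T_0$, pick the first box $(i,j)$ (in a fixed reading order) that is not a northeast corner and violates the recursion, replace its entry by the value prescribed by the recursion, and show (i) the result $T_1$ is still a flagged SSYT, and (ii) ${\rm depth}(T_1)\ge{\rm depth}(T_0)$ — the latter by a small case analysis reducing to convexity of $f(a)=|a-u+1|+|a-v-1|$. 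A termination argument on a reverse-lex order of failure vectors guarantees the process reaches some $T_k$ satisfying (a), and the paper then checks $T_k$ also satisfies (b), hence $T_k=\Psi(\mathcal{D})$ for some $\mathcal{D}$ with ${\rm wt}(\mathcal{D})={\rm depth}(T_k)\ge{\rm depth}(T_0)$. Applying this to a $T_0$ realizing $\deg H_{v,w}(q)$ gives $\deg P_{v,w}(q)\ge\deg H_{v,w}(q)$. So you need to replace ``extremality implies image membership'' with a constructive deformation-and-monotonicity argument; without it, the degree equality is unproven.
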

While the Grassmannian case \emph{per se} is new and supports our thesis,
the covexillary generality also further highlights the amenability of covexillary Schubert varieties.
Our proof of Theorem~\ref{thm:mainineq} is based on a new formula for covexillary Kazhdan-Lusztig polynomials.
An earlier rule was given by
A.~Lascoux \cite{Lascoux}, generalizing his earlier Grassmannian rule
with M.-P.~Sch\"{u}tzenberger \cite{LS:KL} (for more recent treatments of the Grassmannian case see, e.g.,
\cite{Zinn, Jones.Woo}).
Our formulation of a covexillary rule is in terms of \emph{drift configurations}. It is
entirely graphical and is perhaps more handy to compute.

To state our rule we use standard combinatorics of the symmetric group, see, e.g., \cite[Chapter~2]{Manivel}
as well as some terminology introduced in \cite{Li.Yong} (the reader may wish to compare
Examples~\ref{exa:drift} and~\ref{exa:433} below with what follows). Let $w\in S_n$ be covexillary.
Superimpose the {\bf graph} $G(v)$ of $v$ drawn with dots $\circ$ in positions $(n-w(j)+1,j)$
on top of the {\bf diagram}
\[D(w)=\{(i,j): i<n-w(j)+1, \textrm{ and } j<w^{-1}(n-i+1)\}\subset [n]\times [n].\]
Throughout, we use the convention that rows are indexed from bottom to top, and columns are indexed from left to right.
Move each box ${\mathfrak e}$ of the {\bf essential set}
\[\Ess(w)=\{(i,j)\in D(w): (i+1,j), (i,j+1)\notin D(w)\}\]
diagonally southwest by the number of dots of $G(v)$ weakly southwest of ${\mathfrak e}$.
Call the resulting boxes $\{{\mathfrak e}'\}$, and define
$B(v,w)$ to be the smallest Young diagram that contains
$\{{\mathfrak e}'\}$ and $(1,1)$ (we use French convention for our Young diagrams).
The {\bf shape} $\lambda(w)$ of $w$ is obtained by sorting the
vector counting the number of boxes in nonempty rows of $D(w)$ into decreasing order.
Now, draw $\lambda(w)$ in the southwest
corner of $B(v,w)$.

Declare that any corner of $\lambda(w)$ is $0$-{\bf special}.
Let ${\rm arm}(b)$ (respectively, ${\rm leg}(b)$)
refer to the boxes in $\lambda(w)$ strictly to the right (above) of $b$ and in the same row (column).
Inductively, a box $b\in\lambda(w)$ is
{\bf $z$-special}, for $z\in {\mathbb N}$ if it is maximally northeast subject to
\begin{itemize}
\item $|{\rm leg}(b)|=|{\rm arm}(b)|$; and
\item none of the boxes of $\{b\}\cup{\rm arm}(b)\cup {\rm leg}(b)$ are $y$-special for any $y< z$.
\end{itemize}
A box is ${\bf special}$ if it is $z$-special for some $z$.
The {\bf continent} of a special box $b$ is the set of $x\in\lambda(w)$ such that
$b$ is the maximally northeast special box that is weakly southwest of $x$.
The union of continents is ${\rm Pangaea}(v,w)\subseteq \lambda(w)$ (the set difference being an immovable reference
continent).\footnote{As in the supercontinent that has been hypothesized to exist
$250$ million years ago in the theories of continental drift and plate tectonics}

\begin{definition}\label{def:drift} A {\bf drift configuration} ${\mathcal D}$
is a non-overlapping configuration of continents inside $B(v,w)$, such that
\begin{itemize}
\item each special box is diagonally weakly northeast
of its position in ${\rm Pangaea}(v,w)$; and
\item relative southwest-northeast positions of special cells are maintained.
\end{itemize}
\end{definition}
Let ${\rm drift}(v,w)$ be the set of all such ${\mathcal D}$ and let ${\rm wt}({\mathcal D})$
be the total distance traveled by the continents from ${\rm Pangaea}(v,w)$. Consider the generating series
\[Q_{v,w}(q)=\sum_{{\mathcal D}\in {\rm drift}(v,w)}q^{{\rm wt}({\mathcal D})}.\]

\begin{theorem}
\label{thm:main}
If $v,w\in S_n$ and $w$ is covexillary then:
\begin{itemize}
\item[(I)] $P_{v,w}(q)=Q_{v,w}(q)$.
\item[(II)] If we instead take every box of $\lambda(w)$ to be a separate ``country'', each of which ``drifts'' according to the
 rules of Definition~\ref{def:drift}, the total number of
drift configurations is ${\rm mult}_{e_v}(X_w)$; hence $P_{v,w}(1)\leq {\rm mult}_{e_v}(X_w)$ is manifest from {\rm (I)}.
\item[(III)] There is a vertex decomposable (thus shellable) simplicial complex ${\rm KL}_{v,w}$ that is homeomorphic to a ball or a
    sphere, and whose facets are labeled by ${\mathcal D}\in {\rm drift}(v,w)$.
\end{itemize}
\end{theorem}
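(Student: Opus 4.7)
The plan is to establish the three parts in sequence, each reducing to a comparison between drift configurations and a previously-understood combinatorial model.

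For part (I), the strategy is to identify $Q_{v,w}(q)$ with A.~Lascoux's formula \cite{Lascoux} for covexillary Kazhdan-Lusztig polynomials (which itself generalizes the Grassmannian rule of \cite{LS:KL}). Concretely, I would construct an explicit weight-preserving bijection between drift configurations ${\mathcal D}\in {\rm drift}(v,w)$ and Lascoux's objects (chains of peelable diagrams). The guiding principle is that the continent decomposition of $\lambda(w)$ records exactly the data of Lascoux's recursive peeling: each continent corresponds to one peeling step, the northeast displacement of a continent encodes the ``height'' of that peel, and the drift weight ${\rm wt}({\mathcal D})$ translates to Lascoux's degree statistic. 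An alternative route would be to verify the Deodhar/Kazhdan-Lusztig recursion for $Q_{v,w}(q)$ directly, but the bijective approach appears more tractable combinatorially.

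For part (II), I would compare the atomic version of drift configurations (each box of $\lambda(w)$ drifts independently) with the non-intersecting lattice path multiplicity formulas for covexillary Schubert varieties, as in the work of Krattenthaler, Kreiman--Lakshmibai, Rosenthal--Zelevinsky, and \cite{Li.Yong}. An atomic drift configuration specifies, for each box of $\lambda(w)$, a weakly-northeast target in $B(v,w)$ subject to non-overlap and preservation of relative positions; this is precisely the data of a tuple of non-crossing lattice paths counted by those formulas. The inequality $P_{v,w}(1)\leq {\rm mult}_{e_v}(X_w)$ is then immediate from part (I): a continent-level drift configuration is the special case of an atomic one in which all boxes of a common continent translate by the same vector, so the continent configurations inject into the atomic ones.

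For part (III), I would construct ${\rm KL}_{v,w}$ combinatorially so that its vertices are elementary (one-step) northeast drifts of individual continents and its faces are collections of such elementary moves that together specify a consistent (partial) drift configuration; the facets are then in bijection with ${\rm drift}(v,w)$. Geometrically, ${\rm KL}_{v,w}$ should arise as the Stanley--Reisner complex of a Gr\"obner degeneration of a Kazhdan--Lusztig variety in the sense of \cite{Li.Yong}. Vertex decomposability is then proved by induction, taking as shedding vertex the elementary drift of the southwesternmost mobile continent: the deletion (where this continent is forced to remain fixed) and the link (where this continent has already moved one step) each reduce to an instance of the same construction on strictly smaller combinatorial data. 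The ball-versus-sphere dichotomy should be governed by whether the ``fully-drifted'' boundary of ${\rm drift}(v,w)$ closes up on itself. I expect part (III) to be the main obstacle: the interactions between continents after partial drifts have been committed are delicate, and arranging the inductive set-up so that both the deletion and the link remain of the same combinatorial type as the original complex is the technical heart of the argument, very much in the spirit of \cite[Conjecture~8.5]{Li.Yong}.
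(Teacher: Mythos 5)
Your approach to parts (I) and (II) matches the paper in broad strokes. For (I) the paper indeed constructs a weight-preserving bijection with Lascoux's rule, though Lascoux's objects (as presented in \cite{Billey.Lakshmibai}, 6.3.29) are rooted trees built from the parenthesis-word of $\lambda(w)$ with edge-labelings bounded at the leaves, not ``chains of peelable diagrams'': the paper constructs an isomorphism ${\mathcal T}\to {\mathcal T}'$ between the tree of adjacent continents and Lascoux's parenthesis tree, and then the crux is verifying that the leaf bound $b_h-h$ on the $1\times 1$ continents equals Lascoux's ``distance'' of the associated maximal bigrassmannian permutation from $\widetilde{v}$ (Proposition~\ref{prop:finalcalc}). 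For (II) the paper simply cites \cite{Li.Yong}, and your observation that the inequality follows because continent configurations inject into box-by-box configurations is exactly how the paper phrases it.

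Part (III) has a genuine gap. You propose a complex whose \emph{vertices} are elementary one-step northeast drifts of continents and whose faces are ``collections of such elementary moves'' specifying a consistent partial drift. This is not a simplicial complex in any obvious sense: the set of moves from ${\rm Pangaea}$ to a given configuration is not subset-closed (omitting an intermediate move need not give a valid partial configuration), and it is unclear what the empty face, links, or deletions would be. The paper's construction is the dual of this and is designed precisely to avoid these problems. It views a drift configuration as a \emph{semistandard drift tableau} (a filling of each continent by its drift distance), forms the \emph{empty-face drift tableau} ${\mathcal E}_{v,w}$ as the union over all semistandard fillings, and then defines ${\rm KL}_{v,w}$ as a \emph{tableau complex} in the sense of \cite{KMY}: a vertex is a pair $(b\not\mapsto y)$ recording the \emph{exclusion} of value $y$ from continent $b$, a face is any exclusion-set whose complementary set-valued tableau is still \emph{limit semistandard}, and a facet corresponds to excluding everything except a single semistandard filling, i.e.\ a single ${\mathcal D}\in{\rm drift}(v,w)$. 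Vertex decomposability, shellability, and the ball/sphere dichotomy then fall out at once from \cite[Theorem~2.8]{KMY}, with no bespoke induction needed. Your proposed custom induction on ``the southwesternmost mobile continent'' would still require you to verify that both deletion and link are again complexes of the same form, which is exactly the technical burden the tableau-complex framework is built to discharge; without invoking that machinery (or something equivalent) the argument does not close.
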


Our proof of (I) is a bijection with A.~Lascoux's rule (which descends to a bijection with
the rule of \cite{LS:KL} for Grassmannians).
The multiplicity rule from (II)
just restates the theorem from \cite{Li.Yong} (cf. the Grassmannian rule of \cite{Naruse}).
Although the inequality of (II) is a consequence of Theorem~\ref{thm:mainineq}, we are emphasizing that
our rule from (I) is
compatible with our multiplicity rule and makes the inequality transparent.
Actually, whether such an inequality might exist was first asked
to us (independently) by S.~Billey and A.~Woo. Afterwards, H.~Naruse informed us that he has a
proof for all cominuscule $G/P$. These questions and results
provided us initial motivation for our work towards Theorem~\ref{thm:main}.
Note that as with the more general inequality of Theorem~\ref{thm:mainineq}, this inequality is not
true in general. For example, $P_{13425,34512}(1)=3$ while ${\rm mult}_{e_{13425}}(X_{34512})=2$.

The statement (III) is derived from \cite{KMY}. It points out a further resemblance to the
combinatorics of ${\rm mult}_{e_v}(X_w)$ in \cite{Li.Yong}, where a similar complex
also appears.
\begin{example}\label{exa:drift}
Figure~\ref{fig:1} depicts ${\rm Pangaea}(v,w)$ with six (colored) continents where
$$w=\underline{20}\;\underline{19}\;\underline{18}\;\underline{11}\;\underline{10} \;9 \;8\; \underline{12}\;
\underline{17}\;\underline{16}\;7\;6\;\underline{15}\;\underline{14}\;\underline{13}\;5\;4\;3\;2\;1,  \mbox{\ and $v=id$.\ \ \ \ \ \ \ \ \ \ \ \ \ \ \  \ \ \ \ \  \ \ \ \ \ \ \ \ \qed}$$
  \begin{figure}[h]
    \centering 
    \includegraphics[width=0.80\columnwidth]{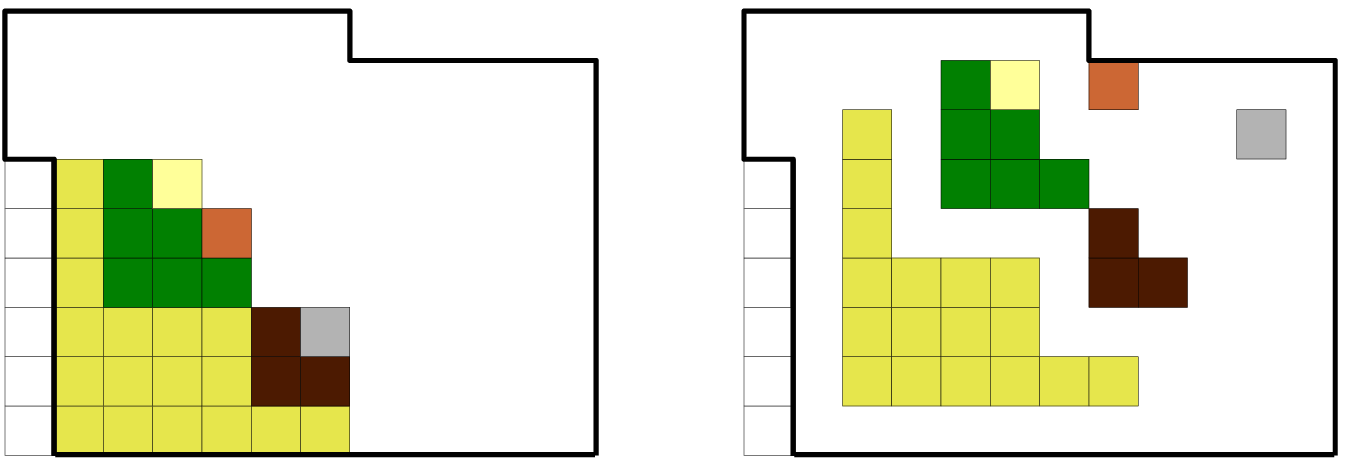}
\begin{picture}(1,1)
\put(-300,60){${\rm Pangaea}(v,w)$}
\put(-30,60){${\mathcal D}$}
\end{picture}
\caption{\label{fig:1} ${\rm Pangaea}(v,w)$ and a particular ${\mathcal D}\in {\rm drift}(v,w)$; ${\rm wt}({\mathcal D})=14$}
    \end{figure}
\end{example}

\begin{example}
\label{exa:433}
Let $w=\underline{10}954382761$, $v=234651789\underline{10}$. Here $\lambda(w)=(4,4,3)$. Starting from
$D(w)$, and the overlaid dots $\circ$ of $G(v)$, we derive $B(v,w)$. The special boxes are marked by $+$'s. See Figure~\ref{fig:2}.
  \begin{figure}[h]
    \centering 
    \includegraphics[width=.3\columnwidth]{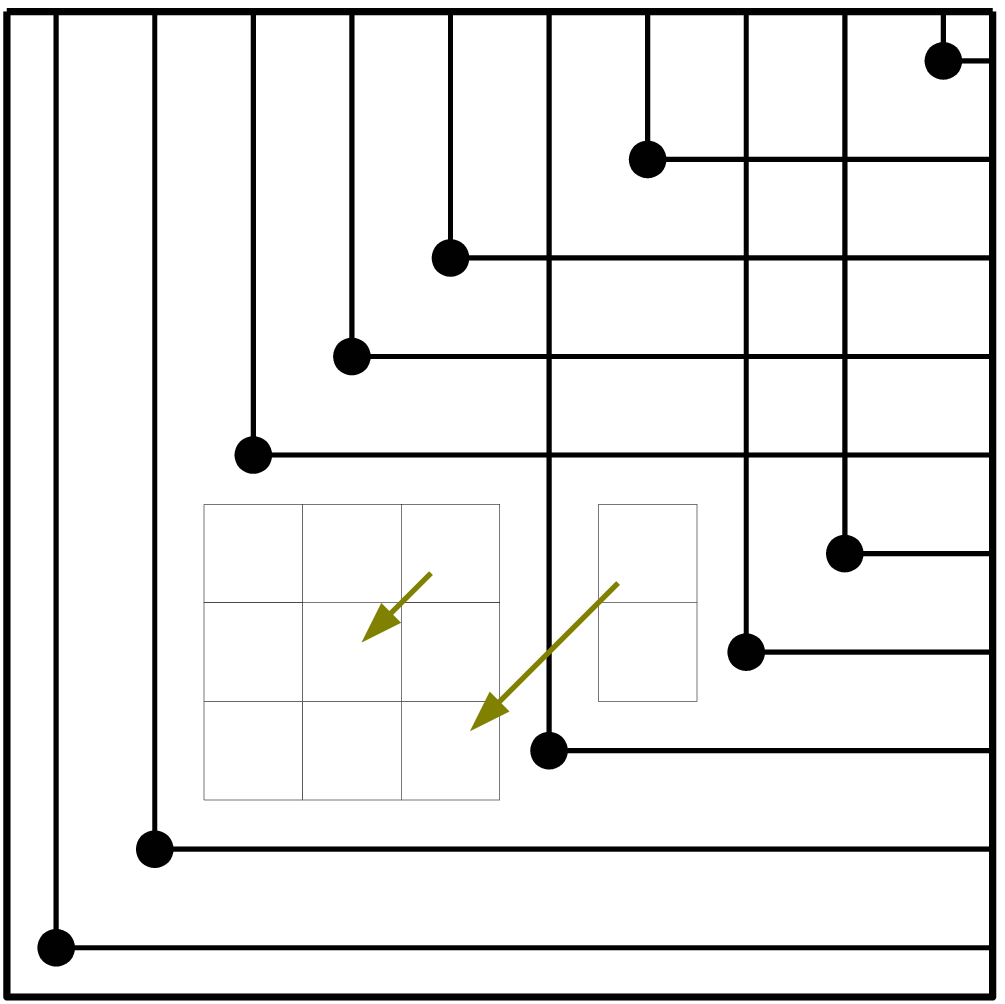}
\begin{picture}(1,1)
\put(-70,130){$\circ$}
\put(-140,115){$\circ$}
\put(-126,101){$\circ$}
\put(-112,87){$\circ$}
\put(-84,73){$\circ$}
\put(-98,59){$\circ$}
\put(-56,45){$\circ$}
\put(-42,31){$\circ$}
\put(-28,18){$\circ$}
\put(-14,5){$\circ$}
\put(-84,62){${\mathfrak e}_1$}
\put(-57,62){${\mathfrak e}_2$}
\put(-102,45){${\mathfrak e}'_1$}
\put(-88,32){${\mathfrak e}'_2$}
\end{picture}
\quad
\quad
\quad
\quad
\quad
  \includegraphics[width=.2\columnwidth]{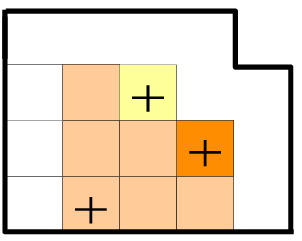}
\begin{picture}(1,1)
\put(-30,80){$B(v,w)$}
\put(-90,25){$\lambda(w)$}
\put(-140,30){$\implies$}
\end{picture}
\caption{\label{fig:2} An overlay of $D(w)$ with $G(w)$ ($\bullet$'s) and $G(v)$ ($\circ$'s); constructing $B(v,w)$}
    \end{figure}
Now $\Ess(w)=\{{\mathfrak e}_1, {\mathfrak e}_2\}$ (being the maximally northeast boxes of each
connected component of $D(w)$) move to
$\{{\mathfrak e}_1', {\mathfrak e}_2'\}$, as determined by the $\circ$'s of $G(v)$. The five drift configurations
are shown in Figure 3.
  \begin{figure}[h]
    \centering 
    \includegraphics[width=1\columnwidth]{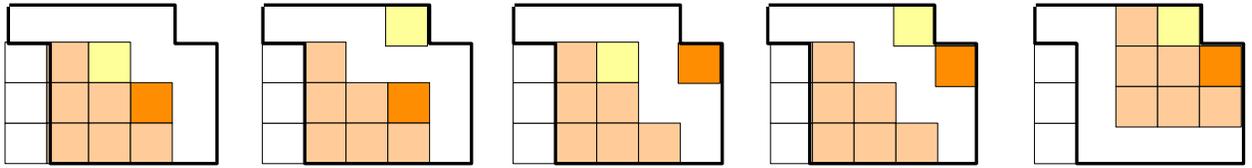}
\caption{\label{figure3} Drift configurations for $Q_{234651789\underline{10},\underline{10}954382761} (q)=1+2q+q^2+q^3$}
    \end{figure}
    \qed
\end{example}
Our proof of Theorem~\ref{thm:mainineq} also depends on a new (and the first manifestly
positive)
combinatorial rule for covexillary $H_{v,w}(q)$. It additionally implies special cases of the nonnegativity and upper semicontinuity
conjectures. Identify a partition
$\lambda=(\lambda_1\geq\cdots\geq\lambda_\ell>0)$ with its Young diagram (in French notation). Recall,
a Young tableau $T$ of shape $\lambda$ is {\bf semistandard} if it is weakly increasing along rows and strictly increasing up columns.
Given a vector $\textbf{b}=(b_1,\dots,b_\ell)$, we say $T$ is {\bf flagged} by ${\bf b}$ if each entry in row $i$ is
at most $b_i$. Let ${\rm SSYT}(\lambda,{\bf b})$ denote the set of semistandard Young tableaux flagged by ${\bf b}$.
A (nonempty) set-valued filling is semistandard
if each tableau obtained by choosing a singleton from each set
gives a semistandard tableaux in the above sense \cite{Buch:KLR}.
Similarly, we define {\bf flagged set-valued semistandard tableaux}, and the set
${\rm SetSSYT}(\lambda,{\bf b})$ \cite{KMY}.

Define $U\in {\rm SetSSYT}(\lambda,{\bf b})$ to be {\bf lower saturated} if no smaller number can be added to
any box $U(i,j)$ while maintaining semistandardness, i.e., in symbols, each
$$U(i,j)=[\alpha,\beta]:=\{\alpha, \alpha+1,\dots,\beta-1,\beta\},$$
for some $\alpha,\beta$ (depending on $i,j$) where
$$\alpha=\max\big{\{}\max U(i,j-1), 1+\max U(i-1,j)\big{\}}.$$
Our convention for lower saturated tableaux is that  $U(i,0)=1$ for all  $i>0$ and $U(0,j)=0$ for all $j>0$.
Let
\[{\rm Lower}(\lambda,{\bf b})\subseteq {\rm SetSSYT}(\lambda,{\bf b})\]
denote this subset of lower saturated tableaux.

Define the {\bf saturation} ${\rm sat}(T)\in {\rm Lower}(\lambda,{\bf b})$
of $T\in {\rm SSYT}(\lambda,{\bf b})$ to be
$${\rm sat}(T)(i,j)=[\max\{T(i,j-1), 1+T(i-1,j)\}, T(i,j)].$$

For $U\in {\rm SetSSYT}(\lambda,{\bf b})$, let
\[\ex(U)=|U|-|\lambda|,\]
where $|U|$ refers to the number of entries of $U$ and $|\lambda|=\lambda_1+\lambda_2+\cdots$.

Finally, if $T\in {\rm SSYT}(\lambda,{\bf b})$ set
\begin{equation}
\label{eqn:depth}
\ey(T):=\ex(\sat(T))=|\sat(T)|-|T|.
\end{equation}

If $\lambda(w)=(\lambda(w)_1\ge\cdots\geq\lambda(w)_\ell>0)$ then define ${\bf b}={\bf b}(\Theta_{v,w})=(b_1,\dots,b_\ell)$ by
$$b_i=\max\{m:\; B(v,w)_m\ge \lambda(w)_i+m-i\}.$$
This is the maximum distance that the rightmost box in row $i$ can drift diagonally northeast within $B(v,w)$ (ignoring presence
of other boxes).
\begin{theorem}
\label{thm:Kvwrule}
Let $w\in S_n$ be covexillary. Then
$$H_{v,w}(q)=\sum_{T\in {\rm SSYT}(\lambda(w),{\bf b}(\Theta_{v,w}))}q^{\ey(T)}=\sum_{U\in {\rm Lower}(\lambda(w),{\bf
b}(\Theta_{v,w}))}q^{\ex(U)}.$$
Moreover, Conjecture~\ref{conj:mainintro} is true under the hypothesis.
\end{theorem}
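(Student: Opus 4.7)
The strategy is to compute $H_{v,w}(q)$ via a Gr\"{o}bner degeneration of the Kazhdan-Lusztig ideal cutting out $X_w$ locally at $e_v$, following \cite{KMY, Li.Yong}, and then to read off the $h$-polynomial from the associated Stanley-Reisner complex. Since the coordinate ring of the Kazhdan-Lusztig variety has Hilbert series $H_{v,w}(q)/(1-q)^{\ell(w)}$ and flat degeneration preserves Hilbert series, it suffices to exhibit a squarefree initial ideal whose Stanley-Reisner complex $\Delta_{v,w}$ has the claimed $h$-polynomial.

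The facets of $\Delta_{v,w}$ will be naturally labeled by ${\rm Lower}(\lambda(w),{\bf b}(\Theta_{v,w}))$, specializing at $q=1$ to the multiplicity count of \cite{Li.Yong} (equivalently, the single-box version of Theorem~\ref{thm:main}(II)). By the vertex decomposability obtained in the covexillary case via \cite{KMY}, $\Delta_{v,w}$ is shellable, and I will exhibit a shelling order under which the restriction statistic of the facet $F_U$ equals $\ex(U)$. A natural candidate is to linearly order facets by comparing the corresponding $U$ in a reverse row-reading order; then each ``extra'' entry in $U(i,j)$ --- an element of $U(i,j)\setminus\{\max U(i,j)\}$ --- should correspond to a subface forced into the restriction by an earlier facet. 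Verifying this compatibility between the shelling and $\ex$ is the principal technical challenge; once it is in hand, $H_{v,w}(q)=\sum_{U\in{\rm Lower}(\lambda(w),{\bf b}(\Theta_{v,w}))}q^{\ex(U)}$.

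The equivalence of the two displayed formulas is a purely combinatorial bijection given by saturation: $\sat$ maps ${\rm SSYT}(\lambda,{\bf b})$ to ${\rm Lower}(\lambda,{\bf b})$, with inverse $U\mapsto (\max U(i,j))_{i,j}$, which is easily checked to land in ${\rm SSYT}(\lambda,{\bf b})$. Equation~(\ref{eqn:depth}) then yields $\ey(T)=\ex(\sat(T))$, so the two statistics match under this bijection.

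Finally, Conjecture~\ref{conj:mainintro} follows under the covexillary hypothesis as follows. Nonnegativity is immediate from either manifestly positive expression. For semicontinuity, suppose $v'\leq v$ in Bruhat order; I will establish the componentwise bound ${\bf b}(\Theta_{v,w})_i\leq {\bf b}(\Theta_{v',w})_i$, which reduces to the inclusion $B(v,w)\subseteq B(v',w)$. This in turn follows from the recipe for $B(v,w)$: a single Bruhat cover can only move a dot of $G(v)$ weakly southwest, so every box of $\Ess(w)$ acquires at least as many dots of $G(v')$ weakly southwest of it and therefore drifts at least as far, enlarging the enclosing Young diagram. The induced inclusion ${\rm SSYT}(\lambda(w),{\bf b}(\Theta_{v,w}))\hookrightarrow {\rm SSYT}(\lambda(w),{\bf b}(\Theta_{v',w}))$ preserves the $\ey$ statistic, yielding $H_{v,w}(q)\preceq H_{v',w}(q)$ coefficientwise.
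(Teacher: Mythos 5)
Your overall plan diverges substantially from the paper's proof, and it leaves the central step unresolved. The paper does \emph{not} prove the formula by producing a shelling and matching a restriction statistic to $\ex$. Instead, it begins from the already-established formula in \cite[Theorem~6.6]{Li.Yong},
\[
H_{v,w}(q)\;=\!\!\!\sum_{U\in {\rm SetSSYT}(\lambda(w),{\bf b})}\!\!\!(q-1)^{\ex(U)},
\]
which is a signed (Gr\"obner/Stanley--Reisner) expansion, and then performs a purely combinatorial resummation: grouping the set-valued tableaux $U$ by $T=\sup(U)$, observing $\#\sup^{-1}(T)=2^{\ey(T)}$, and applying the binomial theorem to get $\sum_{U\in\sup^{-1}(T)}(q-1)^{\ex(U)}=(1+(q-1))^{\ey(T)}=q^{\ey(T)}$. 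No shelling or $h$-vector-from-restriction argument is needed. By contrast, your proposal makes the identification of the shelling restriction statistic with $\ex(U)$ the crux of the argument and then explicitly defers it (``the principal technical challenge''); without that verification the proposal does not actually establish the formula. This is not a cosmetic omission -- it is precisely the hard content, and it is also a harder route than the one the paper takes given that the signed set-valued formula is already available.

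Two further points. First, a technical inaccuracy in the setup: the Kazhdan--Lusztig ideal $I_{v,w}$ is not in general homogeneous, so one cannot simply degenerate the Kazhdan--Lusztig variety and read $H_{v,w}(q)$ off its Hilbert series; what degenerates (as the paper explains in Section~2.1, citing \cite{Li.Yong}) is the \emph{projectivized tangent cone} ${\rm Proj}\bigl({\rm gr}_{{\mathfrak m}_{e_v}}{\mathcal O}_{e_v,X_w}\bigr)$. Relatedly, in the tableau-complex picture the facets are naturally labeled by ordinary ${\rm SSYT}$, not by ${\rm Lower}$; the two are of course in bijection via $\sat$, but the statistic attached to a facet in this framework is $\ey$, so the bookkeeping in your sketch is off. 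Second, your semicontinuity paragraph has the monotonicity running the wrong way: for $v'\le v$ the standard Bruhat characterization gives $r^{v'}_{\mathfrak e}\le r^{v}_{\mathfrak e}$ (fewer dots of $G(v')$ weakly southwest), so each essential box drifts \emph{less} far southwest for $v'$, which is what makes $B(v',w)$ \emph{larger} and gives $B(v,w)\subseteq B(v',w)$. Your phrase ``acquires at least as many dots ... and therefore drifts at least as far'' asserts the reverse inequality, and ``a single Bruhat cover can only move a dot weakly southwest'' is not correct either (in a cover two dots swap, one moving northeast and the other southwest). You arrive at the right containment of $B$'s, but the stated justification is wrong and should be replaced with the rank-function comparison $r^{v'}_{\mathfrak e}\le r^{v}_{\mathfrak e}$. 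The final inclusion of ${\rm SSYT}$ sets and the conclusion $H_{v,w}\preceq H_{v',w}$ then match the paper.
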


\begin{example}
 \label{exa:a}
 For $n=5$, $w=52341$, $v=12345$. There are five semistandard tableaux of shape $(2,1)$ and flagged by $(2,3)$:
$$\ktableau{{2}\\{1}&{1}}\quad
\ktableau{{3}\\{1}&{1}}\quad
\ktableau{{2}\\{1}&{2}}\quad
\ktableau{{3}\\{1}&{2}}\quad
\ktableau{{3}\\{2}&{2}}
$$
Their saturations are:
$$\ktableau{{2}\\{1}&{1}}\quad
\ktableau{{2,3}\\{1}&{1}}\quad
\ktableau{{2}\\{1}&{1,2}}\quad
\ktableau{{2,3}\\{1}&{1,2}}\quad
\ktableau{{3}\\{1,2}&{2}}
$$
The corresponding $\ex$ values are:
$$0, 1, 1, 2, 1.$$
Thus by Theorem~\ref{thm:Kvwrule}, $H_{v,w}(q)=1+3q+q^2.$\qed
\end{example}

\begin{example}
Continuing Example~\ref{exa:a}, there are four drift configurations of the two continents,
\begin{figure}[h]
  \centering 
  \includegraphics[width=.5\columnwidth]{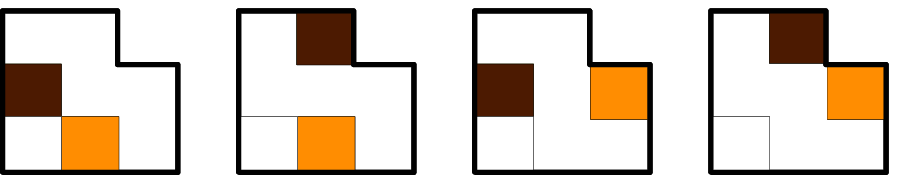}
\end{figure}

%
The Kazhdan-Lusztig polynomial $P_{v,w}(q)=1+2q+q^2$. We see that $P_{v,w}(q)\preceq H_{v,w}(q)$, in agreement with
Theorem~\ref{thm:mainineq}.\qed
\end{example}

\subsection{Organization and contents}
In Section~2, we state some preliminaries and further discuss Conjecture~\ref{conj:mainintro}.
 We then prove Theorem~\ref{thm:Kvwrule}. In Section~3, we briefly
recall, for comparison, basics about Kazhdan-Lusztig theory.
We then prove Theorem~\ref{thm:mainineq} while temporarily assuming Theorem~\ref{thm:main}(I).
Section~4 is devoted to the construction of
the simplicial complex of Theorem~\ref{thm:main}(II) and proof of its asserted properties.
We furthermore define polynomials
generalizing $Q_{v,w}(q)$ that naturally arise from this complex. In Section~5
we prove Theorem~\ref{thm:main}(I). We end that section with two comments (Remarks~\ref{remark:twosymmetries} and~\ref{remark:anotherbound}) about further properties of $P_{v,w}(q)$ that can be
deduced from the rule.
In Section~6, we give a formula for a different ``$q$-analogue'' of
${\rm mult}_{e_v}(X_w)$ than $H_{v,w}(q)$.
In Section~7, we offer some final remarks.


\section{Hilbert series of the local ring ${\mathcal O}_{e_v,X_w}$}

\subsection{Preliminaries}
We use the usual identification ${\rm Flags}({\mathbb C}^n)=GL_n/B$ where $B$ is the
Borel subgroup consisting of invertible upper triangular matrices. Thus $GL_n$ acts on ${\rm Flags}({\mathbb C}^n)$ by left
multiplication, as does $B$, and the torus $T$ of invertible diagonal matrices.
For each $v\in S_n$, let $e_v$ denote the associated $T$-fixed point.
The {\bf Schubert cell} $X_{w}^{\circ}:=Be_w$ while
its Zariski closure is the {\bf Schubert variety}
$X_w={\overline{X_{w}^{\circ}}}$, an irreducible variety of dimension $\ell(w)$.
We have that $e_v\in X_w$ if and only if $v\leq w$ in {\bf Bruhat order}. A neighborhood of each point
$p\in X_w$ is isomorphic to a neighborhood of some $e_v$, by the action of $B$.
Hence, it suffices to restrict attention to
$T$-fixed points. Let $B_{-}$ be the opposite Borel subgroup of invertible lower triangular matrices.
If we set $\Omega_{v}^{\circ}=B_{-}vB/B$ to be the {\bf opposite Schubert cell}, then
up to crossing by affine space, a local neighbourhood of $e_v\in X_w$ is given by the {\bf Kazhdan-Lusztig variety}
${\mathcal N}_{v,w}=X_w\cap \Omega_{v}^{\circ}$ \cite[Lemma~A.4]{Kazhdan.Lusztig}.

Suppose $p$ is a point on a scheme $Y$. Let ${\rm gr}_{{\mathfrak m}_{p}}{\mathcal O}_{p,Y}$
denote the {\bf associated graded ring} of the local ring
${\mathcal O}_{p,Y}$ with respect to its maximal ideal ${\mathfrak m}_p$, i.e.,
\[{\rm gr}_{{\mathfrak m}_{p}}{\mathcal O}_{p,Y}=\bigoplus_{i\geq 0} m_{p}^i/m_p^{i+1}.\]
Since ${\rm gr}_{{\mathfrak m}_{p}}{\mathcal O}_{p,Y}$ picks up a ${\mathbb Z}$-grading,
it now makes sense to discuss its Hilbert series. One can always
express this series in the form
\[{\rm Hilb}({\rm gr}_{{\mathfrak m}_p}{\mathcal O}_{p,Y},q)=\frac{H_{p,Y}(q)}{(1-q)^{\dim Y}}\]
where $H_{p,Y}(q)\in {\mathbb Z}[q]$ is the {\bf $h$-polynomial} associated to $p\in Y$.
It follows from standard facts that $H_{p,Y}(1)={\rm mult}_{p}(Y)$;
see, e.g., \cite[Theorem 5.4.15]{Kreuzer.Robbiano}. Hence $H_{p,Y}(q)=1$ if and only if $Y$ is smooth at $p$. In addition, note
$H_{p,Y}(0)=1$, since this is the
dimension of the zero graded piece of ${\rm gr}_{{\mathfrak m}_p}{\mathcal O}_{p,Y}$, i.e., the dimension of the field
${\mathcal O}_{p,Y}/{\mathfrak m}_p$.

Now, for any $v,w\in S_n$, we define $H_{v,w}(q)\in\mathbb{Z}[q]$ to be the $h$-polynomial associated to
$e_v\in X_w$. At present, there is no purely combinatorial formula (even non-positive or recursive) for
computing $H_{v,w}(q)$. However, instead one can utilize the explicit coordinates and equations for the ideal $I_{v,w}$
 to define ${\mathcal N}_{v,w}={\rm Spec}\left({\mathbb C}[z^{(v)}]/I_{v,w}\right)$, as done in \cite[Section~3.2]{WYII}.
  Then one can Gr\"{o}bner degenerate ${\mathcal N}_{v,w}$ to a scheme theoretic union
of coordinate subspaces ${\mathcal N}_{v,w}'$,
using any of the term orders $\prec_{v,w,\pi}$ from \cite[Section~3]{Li.Yong}. As explained in Theorem~3.1 (and its proof)
of \cite{Li.Yong}, the stated Gr\"{o}bner degenerations degenerate not only ${\mathcal N}_{v,w}$ but also its projectivized
tangent cone ${\rm Proj}({\rm gr}_{{\mathfrak m}_{e_v}}{\mathcal O}_{e_v, X_w})$.
Therefore the $h$-polynomial of ${\mathcal N}_{v,w}'$
equals $H_{v,w}(q)$.

\subsection{Conjectures}
Let us now return to the discussion of Conjecture~\ref{conj:mainintro}. Using the method
for computing $H_{v,w}(q)$ summarized above, we obtained exhaustive checks for $n\leq 7$ of the following claim, restated
from the introduction:
\begin{nonnegativityconjecture}
$H_{v,w}(q)\in {\mathbb Z}_{\geq 0}[q]$.
\end{nonnegativityconjecture}

In \cite[Conjecture~8.5]{Li.Yong} we conjectured that within the family of term orders $\prec_{v,w,\pi}$, at least one gives a
Gr\"{o}bner
limit scheme ${\mathcal N}_{v,w}'$ that is reduced, equidimensional and
whose Stanley-Reisner simplicial complex $\Delta_{v,w}$ is a vertex-decomposable ball or sphere.
This implies in particular that $\Delta_{v,w}$ is shellable and thus Cohen-Macaulay. If this conjecture were true, it would follow that
${\rm gr}_{\mathfrak{m}_{e_v}}{\mathcal O}_{e_v,X_w}$ is Cohen-Macaulay. Thus the nonnegativity Conjecture
would hold by, e.g., \cite[Corollary~4.1.10]{Bruns.Herzog}.

In the case that $I_{v,w}$ is a homogeneous ideal, with respect to the standard
grading that assigns each variable degree $1$, since ${\mathcal O}_{e_v,X_w}$ is Cohen-Macaulay \cite{Raman},
it follows that the associated graded ring is Cohen-Macaulay; see e.g., \cite[Exercise~2.1.27(c)]{Bruns.Herzog}. Hence nonnegativity
follows in this case. A.~Knutson has shown that this homogeneity occurs whenever $w$ is $321$-avoiding \cite[pg.~25]{Knutson:frob}.
Moreover,
in \cite[Section~5]{WYIII} it was explained how ``parabolic moving'' reduces
a large percentage of cases (for $n\leq 10$) to the homogeneous case. However, not every case can be so reduced, including those
in the covexillary class. Thus, these cases provide further support
for the above conjecture, separate from Theorem~\ref{thm:Kvwrule}.

\begin{upperconjecture}
If $v'\le v \leq w$ in Bruhat order, then $H_{v,w}(q)\preceq H_{v',w}(q)$.
\end{upperconjecture}

Unfortunately, even if we knew ${\rm gr}_{\mathfrak{m}_{e_v}}{\mathcal O}_{e_v,X_w}$ to be Cohen-Macaulay, we do not know any
way to express these coefficients in homological terms that would make the upper semicontinuity
conjecture transparent. It should be noted that the proof of this property for Kazhdan-Lusztig polynomials in \cite{Irving}
was not achieved using the geometry of Schubert varieties. However, see the geometric argument for the more general result
\cite[Theorem~3.6]{Braden}.

Although any proof of the above conjectures is desired, ideally one would also like combinatorial explanations
of the properties.

Let us pause to collect some further facts for small $n$ in the following computational result.
For (D) below we refer the reader to \cite[Section~2.1]{WYII} for the
definition of \emph{interval pattern avoidance} of $[x,y]\in S_\infty\times S_{\infty}$. There we explain that the existence
of an \emph{interval pattern embedding} guarantees ${\mathcal N}_{x,y}\cong {\mathcal N}_{\tilde w,w}$,
where $[x,y]\cong [\tilde w,w]$ is an isomorphism of posets of Bruhat intervals in $S_{\infty}$. Thus, if the inequality
$P_{x,y}(q)\preceq H_{x,y}(q)$ fails, so must $P_{\tilde w,w}(q)\preceq H_{\tilde w,w}(q)$.

\begin{proposition}
\label{prop:less6}
\begin{itemize}
\item[(A)] $\deg H_{v,w}(q)\le \deg P_{v,w}(q)$ for $v\leq w\in S_n$ and $n\leq 6$.
\item[(B)] $\deg H_{v,w}(q)\le \frac{\ell(w)-\ell(v)-1}{2}$ for $v<w\in S_n$ and $n\leq 7$.
\item[(C)] The coefficients of $H_{v,w}(q)$ form a unimodal sequence for $v,w\in S_n$ and  $n\leq 7$.
\item[(D)] $P_{v,w}(q)\preceq H_{v,w}(q)$ holds for all $v\leq w\in S_n$ and $n\leq 6$, if and
only if $w$ interval pattern avoids
\[\begin{array}{c}
${\rm [14235,45123], [31524,53412], [14325,45312]}$\\
${\rm [13425,34512],  [24153,45231],  [154326,564312]}.$
\end{array}\]
(Note that the first and fourth intervals, and the second and fifth intervals are related by taking inverses.
For all $n\geq 1$, the inequality fails whenever $w$ contains one of these intervals.)
\end{itemize}
\end{proposition}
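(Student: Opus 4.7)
The plan is a finite computational verification in each part, with the interval pattern formalism from \cite{WYII} used in (D) to repackage the finite data as a clean statement and (via the sentence preceding the proposition) to extend the failure direction to arbitrary $n$. First I would compute $H_{v,w}(q)$ for all Bruhat-comparable pairs $v\leq w\in S_n$ (with $n\leq 6$ for (A), (D) and $n\leq 7$ for (B), (C)) by the Gr\"obner degeneration method recalled in Section~2.1: for each pair, write down the Kazhdan-Lusztig ideal $I_{v,w}\subseteq {\mathbb C}[z^{(v)}]$ as in \cite{WYII,Li.Yong}, degenerate under a term order $\prec_{v,w,\pi}$ to a union of coordinate subspaces ${\mathcal N}_{v,w}'$, and read off $H_{v,w}(q)$ from the $h$-polynomial of the resulting Stanley-Reisner complex $\Delta_{v,w}$. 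In parallel, compute $P_{v,w}(q)$ by the original recursion of \cite{Kazhdan.Lusztig}. Reasonable consistency checks include $H_{v,w}(1)={\rm mult}_{e_v}(X_w)$ (against, e.g., \cite{Li.Yong,Naruse}) and the palindromy/degree bounds for $P_{v,w}$ when $e_v\in X_w$ is rationally smooth.

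Parts (A), (B), (C) are then direct scans of the resulting database: for each pair, compare $\deg H_{v,w}$ with $\deg P_{v,w}$ and with $(\ell(w)-\ell(v)-1)/2$, and test unimodality of the coefficient sequence of $H_{v,w}(q)$. Since $|S_7|=5040$ and one only needs Bruhat-comparable pairs, the check is within reach computationally, although for $n=7$ it is substantial enough that one should organize the enumeration by Coxeter length and exploit the duality $[v,w]\leftrightarrow [w_0ww_0,w_0vw_0]$ (and $[v,w]\leftrightarrow [v^{-1},w^{-1}]$) on isomorphism classes of Kazhdan-Lusztig varieties to cut the work.

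The substantive step is (D). The ``only if'' direction is immediate from the discussion before the proposition: an interval pattern embedding $[x,y]\hookrightarrow [\tilde w,w]$ gives ${\mathcal N}_{x,y}\cong {\mathcal N}_{\tilde w,w}$, so both $P$ and $H$ are preserved, and hence the failure of $P_{v,w}(q)\preceq H_{v,w}(q)$ propagates from any embedded pair. Thus I would enumerate all pairs $v\leq w\in S_n$ for $n\leq 6$ for which the inequality fails, and then compute the minima of this set under the interval pattern embedding order (again modding out by the inverse and long-element symmetries). The forward direction then reduces to checking that every pair in $S_n$ with $n\leq 6$ either contains one of these minimal obstructions or satisfies the inequality; this is exactly what the completed database produces.

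The hard part is (D), and specifically the bookkeeping rather than any conceptual difficulty: one must make sure the search for minima of the failure set is exhaustive, that the resulting six intervals genuinely lie in $S_5$ or $S_6$ (so ``$n\leq 6$'' is the right threshold), and that the duality relationships announced in the parenthetical remark are correctly identified so the list is not artificially enlarged. Once the list is stabilized and every remaining pair in $S_n$, $n\leq 6$, is certified to satisfy $P_{v,w}\preceq H_{v,w}$, the statement follows with no additional input beyond the Kazhdan-Lusztig variety isomorphism used for the infinite-$n$ failure clause.
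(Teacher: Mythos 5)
Your proposal is correct and follows essentially the same route as the paper: a finite computational verification (the paper reports using \texttt{Macaulay 2}) of all Bruhat-comparable pairs in the stated ranges, with $H_{v,w}(q)$ computed via the Gr\"obner-degeneration/Stanley--Reisner method of Section~2.1 and $P_{v,w}(q)$ via the Kazhdan--Lusztig recursion, together with the interval-pattern-embedding isomorphism ${\mathcal N}_{x,y}\cong{\mathcal N}_{\tilde w,w}$ to propagate failures of the inequality to all $n>6$ in part~(D). The only detail you leave implicit that the paper flags is that (A) and (B) are consistent because $\deg P_{v,w}(q)\le\frac{\ell(w)-\ell(v)-1}{2}$ is already a standard KL fact, so (B) is a weakened form of (A); this is an observation rather than an extra step, and your computation would reveal it in any case.
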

\noindent
\emph{Proof and discussion:} Each of the
assertions were verified using {\tt Macaulay~2}. For (A) and (B) note that
$\deg P_{v,w}(q)\leq \frac{\ell(w)-\ell(v)-1}{2}$
is a standard fact about Kazhdan-Lusztig polynomials; cf. Section~3.1.

For (D), computation shows that
$P_{v,w}(q)=H_{v,w}(q)$ for $n\leq 4$, so the inequality holds in that situation.
We checked that each of the intervals $[x,y]$
listed corresponds to a failure of the inequality for $n\leq 5$. For $n=6$ we computationally
verified the claim (there are $36$ cases $w\in S_6$ where the inequality fails for some $v\leq w$, and of those
only one cannot be blamed on the $n=5$ cases). The $n> 6$ case follows from general properties of interval
pattern embeddings recalled above.
\qed

One might conjecture that both (A) and its weak form (B) hold for all $n$. However with (A), experience has shown that
data for $n\leq 6$ is soft evidence for any conjecture that involves Kazhdan-Lusztig polynomials. Note that
if (A) is true, one cannot have
$P_{v,w}(q)\preceq H_{v,w}(q)$ unless $\deg H_{v,w}(q)= \deg P_{v,w}(q)$, which is indeed what we show when
$w$ is covexillary.

In view of (C), it is also natural to guess that unimodality is true in general.
One warning however
is that the stronger assertion that the coefficients of $H_{v,w}(q)$ are \emph{log-concave} is false, as the
example below shows:

\begin{example}
Let $w=5671234$, $v=1352476$, computation using {\tt Macaulay~2} shows there is a choice of $\prec_{v,w,\pi}$ such that
${\mathcal N}_{v,w}'$ is Cohen-Macaulay (but not Gorenstein), and that $H_{1352476,5671234}(q)=1+2q+q^2+q^3$, which is not
log-concave.\qed
\end{example}

By contrast, see the related work of M.~Rubey \cite{Rubey} that shows log-concavity holds in
a special ladder determinantal case (note that $w$ is not covexillary in our counterexample).

Even knowing Cohen-Macaulayness of
${\rm gr}_{\mathfrak{m}_{e_v}}{\mathcal O}_{e_v,X_w}$ does not, in and of itself, prove unimodality. In fact,
R.~Stanley had conjectured~\cite[Conjecture 4(a)]{Stanley:unimodal} unimodality for a
general graded Cohen-Macaulay domain $R$ over a field which is
generated by $R_1$. Actually, he even conjectured the stronger claim of log-concavity, although counterexamples to the stronger
claim were later found by G.~Niesi-L.~Robbiano, see \cite[Section~5]{Brenti}.
(The above example gives a different counterexample to Stanley's log-concavity conjecture.)

It should also be mentioned that in contrast, the Kazhdan-Lusztig polynomials are not in general unimodal
and in fact P.~Polo \cite{Polo} proved that every nonnegative integral polynomial with constant coefficient $1$ is some $P_{v,w}(q)$.

While Theorem~\ref{thm:Kvwrule} allows us to prove
the nonnegativity, upper-semicontinuity and degree properties
for covexillary $X_w$, a resolution to the following has alluded us:

\begin{problem}
Give a combinatorial proof (e.g., using Theorem~\ref{thm:Kvwrule}) for the unimodality conjecture,
when $w$ is covexillary (or even cograssmannian) by establishing a sequence of explicit
injections and surjections of the relevant
Young tableaux.
\end{problem}

Concerning (D), we do not expect the characterization to be valid for all $n$. Instead, one aims to
expand this list into a (human-readable) classification, via a finite list of families of patterns to avoid,
as is the case for many other properties studied in \cite{WYII}.

Using the analogy with Kazhdan-Lusztig theory, numerous further problems, that had been previously
considered for $P_{v,w}(q)$ but not $H_{v,w}(q)$, make sense. To name a few: Is $H_{v,w}(q)$
determined by the poset isomorphism class of the interval $[v,w]$ in Bruhat order? (This is an analogue
of a conjecture of G.~Lusztig.) Can one give a combinatorial algorithm for computing $H_{v,w}(q)$?
Better yet, can one find a positive combinatorial rule for $H_{v,w}(q)$, thus establishing the nonnegativity conjecture?

\subsection{Proof of Theorem~\ref{thm:Kvwrule}}
Continuing the definitions before the statement of Theorem~\ref{thm:Kvwrule} in Section~1,
set
$$\sup: {\rm SetSSYT}(\lambda,{\bf b})\to {\rm SSYT}(\lambda, {\bf b})$$ by sending $U$ to $T$ where $T(i,j)=\max U(i,j)$.

Clearly,
\begin{lemma}
\label{lemma:mutuallyinv}
The maps
\[\sat: {\rm SSYT}(\lambda,{\bf b})\to {\rm Lower}(\lambda,{\bf b})
\mbox{\ \ and \ \ }
\sup|_{{\rm Lower}(\lambda,{\bf b})}: {\rm Lower}(\lambda,{\bf b})\to {\rm SSYT}(\lambda,{\bf b})\]
are mutually inverse bijections.
\end{lemma}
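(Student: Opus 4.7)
The plan is to verify directly from the definitions that both maps are well-defined and that their compositions (in both orders) are the identity. This is essentially a bookkeeping argument, so I would organize it as two short verifications.

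First I would check well-definedness. Given $T\in {\rm SSYT}(\lambda,{\bf b})$, I need to see that $\sat(T)$ lies in ${\rm Lower}(\lambda,{\bf b})$. By construction, $\sat(T)(i,j)=[\alpha_{ij},T(i,j)]$ where $\alpha_{ij}=\max\{T(i,j-1),1+T(i-1,j)\}$. Since $T$ is semistandard, $T(i,j)\ge T(i,j-1)$ and $T(i,j)\ge 1+T(i-1,j)$, so the interval is nonempty. The set-valued semistandardness of $\sat(T)$ reduces to the inequalities $\max \sat(T)(i,j-1)\le \min \sat(T)(i,j)$ and $\max\sat(T)(i-1,j)<\min\sat(T)(i,j)$, both of which are immediate from the formula for $\alpha_{ij}$. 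The flagging condition is inherited from that of $T$, since the largest entry in row $i$ of $\sat(T)$ equals the largest entry in row $i$ of $T$. Finally, the identity $\alpha_{ij}=\max\{\max\sat(T)(i,j-1),\,1+\max\sat(T)(i-1,j)\}$ is exactly the lower-saturation requirement. In the other direction, $\sup(U)$ is easily verified to be semistandard and flagged: if $U$ is semistandard as a set-valued tableau, then a fortiori the tableau of maxima is semistandard, and the flag constraint is the same condition.

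Next I would verify the two compositions. For $T\in {\rm SSYT}(\lambda,{\bf b})$, the cell $(i,j)$ of $\sup(\sat(T))$ equals $\max[\alpha_{ij},T(i,j)]=T(i,j)$, so $\sup\circ\sat={\rm id}$. Conversely, for $U\in{\rm Lower}(\lambda,{\bf b})$, write $U(i,j)=[\alpha,\beta]$ where $\alpha=\max\{\max U(i,j-1),\,1+\max U(i-1,j)\}$ and $\beta=\max U(i,j)$. Setting $T=\sup(U)$, we have $T(i,j)=\beta$, $T(i,j-1)=\max U(i,j-1)$, and $T(i-1,j)=\max U(i-1,j)$. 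Plugging into the definition of $\sat$ recovers exactly the interval $[\alpha,\beta]=U(i,j)$, so $\sat\circ\sup|_{\rm Lower}={\rm id}$.

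There is no real obstacle here; the only mild subtlety is being careful with the boundary conventions $U(i,0)=1$, $U(0,j)=0$ (and correspondingly for $T$), which enter the definition of $\alpha_{ij}$ on the first row and column. Once those conventions are applied uniformly, both verifications are just substitution. Thus the lemma follows, and the statement is indeed as transparent as the ``Clearly'' suggests.
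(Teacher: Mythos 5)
Your verification is correct and is precisely the routine unwinding of the definitions that the paper leaves implicit behind the single word ``Clearly.'' Nothing to add.
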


Let us recall some definitions and terminology utilized in \cite{Li.Yong}.
Define $r^{w}_b=r^w_{(i,j)}$ to be the number of $\bullet$ of $G(w)$ weakly southwest of the box $b=(i,j)$.
Given $v\le w$ and $w$ covexillary, $\Theta_{v,w}\in S_n$ is defined \cite{Li.Yong} to be the unique permutation such
that $\lambda(w)=\lambda(\Theta_{v,w})$ and
$$\Ess(\Theta_{v,w})=\{{\mathfrak e}': \; {\mathfrak e}' \textrm{ is obtained by moving each ${\mathfrak e}\in \Ess(w)$ diagonally
southwest by $r^v_{\mathfrak e}$ units}\}.$$
The permutation $\Theta_{v,w}$ was proved to be itself covexillary.

Define $B(w)$ to be the smallest Young diagram
with southwest corner in position $(1,1)$ that contains all of $\Ess(w)$. Set
\[B(v,w)=B(\Theta_{v,w}).\]
If $\lambda(w)=(\lambda(w)_1\ge\cdots\geq\lambda(w)_\ell>0)$ then define ${\bf b}={\bf b}(w)=(b_1,\dots,b_\ell)$ by
$$b_i=\max\{m:\; B(w)_m\ge \lambda(w)_i+m-i\}.$$
The above agrees with, and slightly reformulates, the definitions of $B(v,w)$ and ${\bf b}$ from the introduction.

In \cite[Theorem 6.6]{Li.Yong} we proved that
\[{\rm Hilb}({\rm gr}_{{\mathfrak m}_{e_v}}{\mathcal O}_{e_v,X_w},q)=G_{\lambda(w)}(q)/(1-q)^{{n\choose 2}}\]
where
\[G_{\lambda(w)}(q)=\sum_{k\geq |\lambda(w)|}(-1)^{k-|\lambda(w)|}(1-q)^k\times \#{\rm SetSSYT}(\lambda(w),{\bf b},k)\]
and $\#{\rm SetSSYT}(\lambda(w),{\bf b},k)$ is the number of flagged set-valued semistandard Young tableaux
of shape $\lambda(w)$ with flag ${\bf b}={\bf b}(\Theta_{v,w})$ which use exactly $k$ entries.

Since the local ring $\mathcal{O}_{e_v,X_w}$ is of dimension $\ell(w)={n\choose 2}-|\lambda(w)|$, we rewrite
$${\rm Hilb}({\rm gr}_{{\mathfrak m}_{e_v}}{\mathcal O}_{e_v,X_w},q)=H_{v,w}(q)/(1-q)^{\ell(w)}$$
where
$$H_{v,w}(q)=\sum_{U\in {\rm SetSSYT}(\lambda(w),{\bf b})}(q-1)^{\ex(U)}.$$

We need to show that
\begin{equation}\label{eq:SetSSYT}
\sum_{U\in {\rm SetSSYT}(\lambda(w),{\bf b})}(q-1)^{\ex(U)}=\sum_{T\in {\rm SSYT}(\lambda(w),{\bf b})}q^{\ey(T)}
\end{equation}
by proving that, for every $T\in {\rm SSYT}(\lambda(w),{\bf b})$,
$$\sum_{U\in \sup^{-1}(T)}(q-1)^{\ex(U)}=q^{\ey(T)}.$$
There are $\ey(T)$ elements in $\sat(T)$ but not in $T$. We can delete any subset of those elements from $\sat(T)$ and obtain
$T'\in\sup^{-1}(T)$ (so $\#\sup^{-1}(T)=2^{\ey(T)}$). Hence the left hand side is equal to
$$(1+(q-1))^{\ey(T)}=q^{\ey(T)},$$
and therefore the equality (\ref{eq:SetSSYT}) follows. Thus, the first equality of the theorem holds and the second is clear from
Lemma~\ref{lemma:mutuallyinv}.

The nonnegativity claim is manifest from the combinatorial rule; however, let us also give
a geometric proof. In \cite{Li.Yong}
we proved that for covexillary $w$, ${\mathcal N}_{v,w}$ degenerates, under a choice of $\prec_{v,w,\pi}$ to a Cohen-Macualay
limit scheme ${\mathcal N}_{v,w}'$.
Hence, nonnegativity of $H_{v,w}(q)$ follows from \cite[Corollary~4.1.10]{Bruns.Herzog} and the discussion of Section~2.1.

For the upper semicontinuity claim, fix $w\in S_n$ and suppose $v'\le v\leq w$. Consider an essential box ${\mathfrak e}\in \Ess(w)$. In
the construction of $\Ess(\Theta_{v,w})$, the essential box ${\mathfrak e}$ is moved diagonally southwest by $r_{\mathfrak e}^v$ units.
Since $v'\le v$, a standard characterization of Bruhat order
shows $r_{\mathfrak e}^{v'}\le r_{\mathfrak e}^v$. Thus, each
essential box ${\mathfrak e}$ moves further southwest in to its position in $\Ess(\Theta_{v,w})$ than it does for $\Ess(\Theta_{v',w})$.
Therefore,
\[B(v,w)\subseteq B(v',w),\]
and hence,
\[{\bf b}(\Theta_{v,w})=(b_1,\dots,b_\ell)\le{\bf b}(\Theta_{v',w})=(b'_1,\dots,b'_\ell),\]
in the sense that $b_i\le b'_i$ for every $i$. Consequently, ${\rm SSYT}(\lambda,{\bf b})\subseteq {\rm SSYT}(\lambda,{\bf b}')$, which
clearly implies $H_{v,w}(q)\preceq H_{v',w}(q)$, as desired. \qed

\section{Kazhdan-Lusztig theory}

\subsection{The Hecke algebra}
Let $R={\mathbb Z}[q^{\frac{1}{2}}, q^{-\frac{1}{2}}]$ be the ring of Laurent polynomials over ${\mathbb Z}$ in the indeterminate
$q^{\frac{1}{2}}$. The {\bf Hecke algebra} $\mathcal H_{n-1}$ of $S_n$
is the algebra over $R$ with basis
$\{T_w: w\in S_n\}$ and relations

\begin{eqnarray}\nonumber
T_{s_i}T_w & = & T_{s_iw} \mbox{\ \ \ \ \ \ \ \ \ \ \ \ \ \ \ \ \ \ \ \ \ \ \  if $\ell(s_iw)>\ell(w)$}\\ \nonumber
T_{s_i}^2 & = & (q-1)T_{s_i}+qT_{id}.
\end{eqnarray}
There is an involution $\iota:{\mathcal H}_{n-1}\to {\mathcal H}_{n-1}$ defined by
$\iota(q^{\frac{1}{2}})=q^{-\frac{1}{2}}$ and $\iota(T_w)=T_{w^{-1}}^{-1}$.

\excise{The Kazhdan-Lusztig polynomial $P_{v,w}(q)$ can be defined recursively as follows. Define $P_{v,w}(q)=0$ if $v\nleq w$ and set
$P_{w,w}(q)=1$. For any simple reflection $s\in S_n$ such that $sw<w$, we have
$$P_{v,w}(q)=q^{1-c}P_{sv,sw}(q)+q^cP_{v,sw}(q)-\sum\mu(z,sw)q^{\frac{1}{2}(\ell(w)-\ell(z))}P_{v,z}(q),$$
were the sum is over all $z<sw$ for which $sz<z$. Here $\mu(z,sw)$ is the coefficient of $q^{\frac{1}{2}(\ell(sw)-\ell(z)-1)}$ in
$P_{z,sw}(q)$, and $c=0$ if $v<sv$ and $c=1$ otherwise. It is known that $P_{v,w}(q)\in\mathbb{N}[q]$, and if $v<w$ then $\deg
P_{v,w}(q)\le\frac{\ell(w)-\ell(v)-1}{2}$.}

It was proved in \cite{Kazhdan.Lusztig} that there exists a basis
$\{{\mathcal C}'_w\}$ of ${\mathcal H}_{n-1}$ that is uniquely determined by the conditions that
\[\iota({\mathcal C}'_w)={\mathcal C}'_w\]
and
\[C'_w=(q^{-\frac{1}{2}})^{\ell(w)}\sum_{v\le w} P_{v,w}(q) T_v,\]
where
\begin{itemize}
\item[(i)] $P_{w,w}(q)=1$;
\item[(ii)] $P_{v,w}(q)=0$ if $v\not\leq w$; and
\item[(iii)] $P_{v,w}(q)\in {\mathbb Z}[q]$ is of degree $\leq \frac{\ell(w)-\ell(v)-1}{2}$ if $v< w$.
\end{itemize}

The existence of this basis was established by an explicit recursion for the
{\bf Kazhdan-Lusztig polynomials} $P_{v,w}(q)$ which we omit.
Our source for these facts is \cite[Chapter 6]{Billey.Lakshmibai} where we refer the reader to for further details.

The conditions (i) and (ii) also hold for the $H_{v,w}(q)$, while (iii) conjecturally
holds (cf. Proposition~\ref{prop:less6} and the discussion thereafter).
It is mildly tempting to think about another basis of the Hecke algebra defined by replacing $P_{v,w}(q)$ by
$H_{v,w}(q)$ in the above definition of $C'_{w}$. While this other basis has a unimodular transition matrix
with the Kazhdan-Lusztig basis, it doesn't possess any of the other nice properties, such as positive structure constants,
or invariance under the involution $\iota$.

\subsection{Proof of Theorem~\ref{thm:mainineq}}
Recall that in what follows, we are assuming the formula for $P_{v,w}(q)$ from Theorem~\ref{thm:main} that
we prove in Section~5.

Given any box $(i,j)\in \lambda(w)$, let
$(\hat{i},j)$ be the top-most box in the column $j$.

Let ${\bf b}={\bf b}({\Theta}_{v,w})$, cf. just before Theorem~\ref{thm:Kvwrule}, or Section~2.3. Define
$$\Psi:{\rm drift}(v,w)\to {\rm SSYT}(\lambda(w),{\bf b})$$
by sending a drift configuration $\mathcal{D}$ to the semistandard tableau $T$, as follows.
For each special box $(i,j)\in\lambda(w)$ we fill $(\hat{i},j)$
with the entry $(\hat{i}+d)$, where $d$ is the distance moved in ${\mathcal D}$
by the continent associated to $(i,j)$, from ${\rm Pangaea}(v,w)$. Note that the value of this
entry is the height of the box $(\hat{i},j)$ after drifting in the drift configuration ${\mathcal D}$.
Now fill in the remaining
empty boxes of $\lambda(w)$ by working down columns, from right to left, according to the following prescription:
\begin{equation}
\label{eqn:prescription}
T(i,j)=\min\{T(i+1,j)-1, T(i-1,j+1)+1\}.
\end{equation}
By convention, set
\begin{equation}
\label{eqn:convention1}
T(i,j)=\infty \mbox{\ if $i>0$ and $(i,j)\notin \lambda(w)$, or if $j>m$;}
\end{equation}
and
\begin{equation}
\label{eqn:convention2}
T(i,j)=0 \mbox{\ if $i=0$ and $j\leq m$,}
\end{equation}
where $m$ is the number of columns in $\lambda(w)$.

\begin{example}
For the five drift configurations $\mathcal{D}$ in Example~\ref{exa:433} (see Figure~\ref{figure3}), the corresponding $\Psi(\mathcal{D})$
are as follows, where the boxes $(\hat{i},j)$ corresponding to special boxes are underlined.
$$\ktableau{{3}&{\underline{3}}&{\underline{3}}\\{2}&{2}&{2}&{\underline{2}}\\{1}&{1}&{1}&{1}} \quad
\ktableau{{3}&{\underline{3}}&{\underline{4}}\\{2}&{2}&{2}&{\underline{2}}\\{1}&{1}&{1}&{1}}\quad
\ktableau{{3}&{\underline{3}}&{\underline{3}}\\{2}&{2}&{2}&{\underline{3}}\\{1}&{1}&{1}&{2}}\quad
\ktableau{{3}&{\underline{3}}&{\underline{4}}\\{2}&{2}&{3}&{\underline{3}}\\{1}&{1}&{1}&{2}}\quad
\ktableau{{3}&{\underline{4}}&{\underline{4}}\\{2}&{2}&{3}&{\underline{3}}\\{1}&{1}&{1}&{2}}
\ .
$$
We will also need ${\rm sat}(\Psi({\mathcal D}))$, which here are
$$\ktableau{{3}&{3}&{3}\\{2}&{2}&{2}&{2}\\{1}&{1}&{1}&{1}} \quad
\ktableau{{3}&{3}&{3,4}\\{2}&{2}&{2}&{2}\\{1}&{1}&{1}&{1}}\quad
\ktableau{{3}&{3}&{3}\\{2}&{2}&{2}&{3}\\{1}&{1}&{1}&{1,2}}\quad
\ktableau{{3}&{3}&{4}\\{2}&{2}&{2,3}&{3}\\{1}&{1}&{1}&{1,2}}\quad
\ktableau{{3}&{3,4}&{4}\\{2}&{2}&{2,3}&{3}\\{1}&{1}&{1}&{1,2}} \ .
$$
\qed

\end{example}

\begin{lemma}\label{lemma:map g}
Suppose $\mathcal{D}\in{\rm drift}(v,w)$ and $T=\Psi(\mathcal{D})$. Then:
\begin{itemize}
\item[(i)] $T$ is a semistandard Young tableau (i.e., $\Psi$ is well-defined);

\item[(ii)] $\Psi$ is an injection;

\item[(iii)] if the $j$-th column of $\lambda(w)$ has no special box, then $T(i,j)=i$ for all $1\le i\le \hat{i}$; and

\item[(iv)] ${\rm wt}(\mathcal{D})=\ex(\sat(T))=\ey(T)$.
\end{itemize}
\end{lemma}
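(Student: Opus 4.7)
The plan is to prove the four items in the order (iii), (i), (ii), (iv), since (iii) provides the structural backbone that streamlines the later arguments. Throughout, I will use the conventions (\ref{eqn:convention1})--(\ref{eqn:convention2}) implicitly and induct on columns of $\lambda(w)$ from right to left.

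For (iii), I would argue inductively on $j$ descending. The top-right corner of $\lambda(w)$ is $0$-special, so the rightmost column is not in the hypothesis of (iii). When column $j$ contains no special box, the prescription (\ref{eqn:prescription}) is the sole filling rule. At the top cell $(\hat{i}_j, j)$ the first candidate $T(\hat{i}_j+1,j)-1$ is $\infty$, so $T(\hat{i}_j,j)=T(\hat{i}_j-1,j+1)+1$; proceeding down the column, $T(i,j)=\min\{T(i+1,j)-1,\,T(i-1,j+1)+1\}$. I would show that the absence of a special box in column $j$ forces both candidates to equal $i$ at every cell, by a structural comparison with the values in column $j+1$ coming from the inductive hypothesis and the special-box structure (using that specials occur only at arm$=$leg positions that are maximally northeast).

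For (i), column-strictness $T(i,j)<T(i+1,j)$ is immediate from (\ref{eqn:prescription}). Row-weakness $T(i,j)\le T(i,j+1)$ reduces, via the second argument of the min, to column-strictness in column $j+1$. The flag bound $T(i,j)\le b_i$ follows because the drift distance $d$ of any continent is bounded by the room available inside $B(v,w)$ (this is exactly how $\mathbf{b}(\Theta_{v,w})$ was defined), and the min rule only decreases entries as one moves left across $\lambda(w)$. For (ii), injectivity is transparent: from $T=\Psi(\mathcal{D})$ one recovers the drift distance of the continent of each special box $(i,j)$ as $T(\hat{i},j)-\hat{i}$, and since continents drift rigidly, this data determines $\mathcal{D}$.

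For (iv), the identity $\ex(\sat(T))=\ey(T)$ is built into (\ref{eqn:depth}), so the content is $\mathrm{wt}(\mathcal{D})=\ey(T)$. I plan to write
\[
\ey(T)=\sum_{(i,j)\in\lambda(w)}\bigl(T(i,j)-\max\{T(i,j-1),\,1+T(i-1,j)\}\bigr)
\]
and evaluate column by column. In a column $j$ with no special box, item (iii) gives $T(i,j)=i$, and the summand at each cell vanishes. In a column $j$ carrying a special box whose continent drifts by $d$, I expect the prescription (\ref{eqn:prescription}) to propagate the ``spike'' of height $d$ at the top of the column downward at unit rate, so the column contributes exactly $d$ to $\ey(T)$. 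Summing over columns, and using that each continent contributes through its top-of-column special cell, yields $\ey(T)=\sum_c d_c=\mathrm{wt}(\mathcal{D})$.

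The main obstacle will be the bookkeeping in (iv) when several special boxes lie in nearby columns, since then the min in (\ref{eqn:prescription}) can switch between its two candidates and the ``spike'' of one continent can interact with that of another. Handling this cleanly will likely require a refined version of (iii) that tracks not just values in special-free columns but the precise ``slope'' of $T$ between adjacent specials, and then exploits the non-overlapping continents condition of Definition~\ref{def:drift} to rule out double-counting.
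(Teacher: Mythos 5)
Your proposal gets the broad shape right (column-by-column analysis, recovering drift distances from special-cell entries for injectivity), but three of the four items have genuine gaps against what the proof actually requires.

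\textbf{Item (i).} Your reduction of row-weakness $T(i,j)\le T(i,j+1)$ to column-strictness in column $j+1$ via the second argument of the min works only for boxes filled \emph{by} the prescription~(\ref{eqn:prescription}). When $(i,j)=(\hat{i_0},j)$ sits atop a special box, its entry is assigned directly (as the post-drift height $\hat{i_0}+d$), and the min rule plays no role. The missing step is to show that in this case $(i,j+1)$ (if it exists) is \emph{also} atop a special box; this is a nontrivial structural claim proved by contradiction using the fact that special boxes have $|\mathrm{arm}|=|\mathrm{leg}|$ and are maximally northeast. Once both entries are geometric heights, the inequality $T(i,j)\le T(i,j+1)$ follows at a glance; without that structural claim your argument does not close.

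\textbf{Item (iii).} The assertion that ``the absence of a special box in column $j$ forces both candidates to equal $i$ at every cell'' is false. The candidate $T(i-1,j+1)+1$ can exceed $i$ (e.g.\ when column $j+1$ carries a continent that has drifted). What is actually proved is only that the \emph{minimum} equals $i$, and this requires a different mechanism: one first shows the anti-diagonal cells $(\hat{i},j+1),(\hat{i}-1,j+2),\dots,(1,j+\hat{i})$ all lie in $\lambda(w)$ (again via the special-box structure), then chains~(\ref{eqn:prescription}) along that anti-diagonal, $T(\hat{i},j)\le T(\hat{i}-1,j+1)+1\le\cdots\le T(1,j+\hat{i}-1)+\hat{i}-1$, and anchors at the bottom with $T(1,j+\hat{i}-1)=1$ (forced by the boundary convention $T(0,\cdot)=0$). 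This gives $T(\hat{i},j)\le\hat{i}$, and column-strictness then pins $T(i,j)=i$. Your inductive plan omits the anti-diagonal containment, which is precisely where the work is.

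\textbf{Item (iv).} You correctly identify this as the main obstacle, and the slope/spike bookkeeping you anticipate needing is actually avoidable. The clean argument is a counting one, column by column: if column $j$ has a special box with drift $d$, then the saturated column $\sat(T)(\cdot,j)$ consists of $\hat{i}$ consecutive intervals $[\alpha_i,\beta_i]$ with $\alpha_1=1$, $\alpha_{i+1}=\beta_i+1$, and $\beta_{\hat{i}}=\hat{i}+d$; this follows from the boundary convention and from the fact that the prescription~(\ref{eqn:prescription}) in column $j-1$ forces $T(i+1,j-1)\le T(i,j)+1$. Hence the column of $\sat(T)$ contains each of $1,2,\dots,\hat{i}+d$ exactly once, contributing exactly $d$ extra entries, with no interaction terms between nearby specials to untangle. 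Columns without special boxes contribute nothing by (iii). Summing columns gives $\ey(T)=\mathrm{wt}(\mathcal{D})$ directly. This is the key idea your plan is missing.

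Item (ii) is fine and essentially identical to the paper's argument.
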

\begin{proof}
    For (i) notice that since each corner of $\lambda(w)$ is special, it is assigned a finite number. Hence
(\ref{eqn:prescription}) assigns each box of $\lambda(w)$ a finite number. Moreover, the column semistandardness conditions are immediate from (\ref{eqn:prescription}). We now establish the row semistandardness condition $T(i,j)\le T(i,j+1)$, considering the two
cases that can occur.

\noindent
\emph{Case 1: $(i,j)$ is atop a special box.} That is, there is a special box $(i_0,j)$ and  $i=\widehat{i_0}$.
Then if $(i,j+1)\in\lambda(w)$, it is atop another special box: Suppose not. Then let the arm and leg
length of $(i,j)$ be ${\mathcal L}$. Note that since $\lambda(w)$ is a Young diagram,
$(i-{\mathcal L}+1, j+{\mathcal L}+1)\not \in \lambda(w)$. Thus there is a smallest integer $k$ such that $1\leq k\leq {\mathcal L}$
and $(i-k+1, j+k+1)\not\in \lambda(w)$. For this $k$ note that $(i-k+1,j+1)$ has equal arm and leg length
equal, no other special boxes are above it (by assumption) and no boxes to strictly to its right can be special (their leg lengths
are strictly longer than their arm lengths). Hence $(i-k+1,j+1)$ is special, but this is a contradiction.

Now that we know that both $(i,j)$ and $(i,j+1)$ are atop special boxes, hence $T(i,j)$ and $T(i,j+1)$ are the heights
of the boxes $(i,j)$ and $(i,j+1)$ in the drift configuration  $\mathcal{D}$. From this interpretation,
it is clear that $T(i,j)\le T(i,j+1)$.

\noindent
\emph{Case 2: $(i,j)$ is not atop a special box.} In this situation, by (\ref{eqn:prescription}):
$$T(i,j)\le T(i-1,j+1)+1\le T(i,j+1).$$

%

Next, (ii) is immediate
since different drift configurations will lead to different initial fillings, of the boxes $(\hat{i},j)$ where
$(i,j)$ is a special box.

Now we prove (iii). First note that
$(\hat{i},j+1), (\hat{i}-1,j+2), (\hat{i}-2,j+3),\dots, (1,j+\hat{i})$
must lie in $\lambda(w)$. Otherwise suppose $k\in\mathbb{Z}_{\geq 0}$ is the smallest integer that $(\hat{i}-k, j+k+1)$ is not in
$\lambda(w)$. Since the $j$-th column does not contain a special box, $(\hat{i},j)$ is not a corner, so $(\hat{i},j+1)$ must lie in
$\lambda(w)$, and we have $k\ge 1$. Since $k$ is the smallest integer where the failure occurs,
$(\hat{i}-k+1,j+k)$ must lie in $\lambda(w)$, and therefore $(\hat{i}-k,j+k)$ lies in $\lambda(w)$.
The conclusion that $(\hat{i}-k,j)$ is deduced is a similar manner as in ``Case 1'' of (i).

Now applying (\ref{eqn:prescription}) repeatedly, we have
$$T(\hat{i},j)\le T(\hat{i}-1,j+1)+1\le T(\hat{i}-2,j+2)+2\le \cdots \le T(1,j+\hat{i}-1)+\hat{i}-1,$$
and each of the boxes being considered actually lie in $\lambda(w)$, because of what we just argued.
Since $T(1,j+\hat{i}-1)=1$ (which holds because $(1,j+\hat{i})\in\lambda(w)$ so (\ref{eqn:prescription}) is assigned
using the boundary value $T(0,j+\hat{i})=0$), we have $T(\hat{i},j)\le \hat{i}$, which forces by the fact $T$ is semistandard that
$T(i,j)=i$ for $1\le i\le \hat{i}$.

In (iv), the second equality is just the definition (\ref{eqn:depth}). Now we establish the first
equality. Consider the $j$-th column of $\lambda(w)$.

\noindent
\emph{Case 1: this column contains a special box $(i,j)$.} The column contains $\hat{i}$ boxes and so each of the numbers
$1,2,\dots,(\hat{i}+d)$ appears exactly once in this column of ${\rm sat}(T)$, by the definition of ${\rm sat}$ and $\Psi$.
Hence the
number of extra entries of ${\rm sat}(T)$
in column $j$ is equal to $(\hat{i}+d)-\hat{i}=d$, which is the same as the distance moved by the continent of $(i,j)$.

\noindent
\emph{Case 2: the column contains no special box.} By (iii), there are not any extra entries in this column.

Summing up the number of extra entries in each column $j$ of $\sat(T)$, we conclude
$\ex(\sat(T))$ is equal to ${\rm wt}(\mathcal{D})$, as desired.
\end{proof}

Therefore,
$$P_{v,w}(q)=\sum_{\mathcal{D}\in {\rm drift}(v,w)} q^{{\rm wt}(\mathcal{D})}=\sum_{\mathcal{D}\in {\rm drift}(v,w)}
q^{\ey(\Psi(\mathcal{D}))}\preceq \sum_{T\in {\rm SSYT}(\lambda(w),{\bf b})} q^{\ey(T)}=H_{v,w}(q).$$
Here the first equality holds by Theorem~\ref{thm:main}(I), the second equality is by (iv), the ``$\preceq$'' is by (ii), and
the final equality is by Theorem~\ref{thm:Kvwrule}.

It remains to prove that
$$\deg H_{v,w}(q)=\deg P_{v,w}(q).$$
Since we have already proved that $P_{v,w}(q)\preceq H_{v,w}(q)$ which implies $\deg P_{v,w}(q)\le\deg H_{v,w}(q)$, we need only to prove
that $\deg H_{v,w}(q)\le\deg P_{v,w}(q)$.
To do so, we will need the following lemma.
\begin{lemma}\label{lemma:image of g}
$T\in {\rm SSYT}(\lambda(w),{\bf b})$ is in the image of
$\Psi:{\rm drift}(v,w)\to {\rm SSYT}(\lambda(w),{\bf b})$ if and only if both of the following conditions are true:
\begin{itemize}
\item[(a)]  For any box $(i,j)$ that is not equal to $({\widehat{i'}},j)$ for a special box $(i',j)$,
(\ref{eqn:prescription}) holds under the conventions (\ref{eqn:convention1}) and (\ref{eqn:convention2}).
\item[(b)] If $(i,j)$ and $(i',j')$ are any two special boxes with $(i,j)$ weakly southwest of $(i',j')$, then
$$T(\hat{i},j)-\hat{i}\le T(\widehat{i'},j')-\widehat{i'}.$$
\end{itemize}
\end{lemma}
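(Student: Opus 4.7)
My plan is to handle each implication separately. The forward direction is essentially an unwinding of the construction of $\Psi$ together with Definition~\ref{def:drift}, while the backward direction is the substantive one, requiring reconstruction of a drift configuration from the tableau data.

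For the forward direction, suppose $T = \Psi(\mathcal{D})$. Condition (a) is immediate from the definition of $\Psi$: after placing the ``initial'' values at each top-of-column $(\widehat{i'},j)$ associated with a special box $(i',j)$, the remaining boxes of $\lambda(w)$ are filled precisely by the recursion (\ref{eqn:prescription}), processed down columns from right to left. For (b), for a special box $(i,j)$ the construction sets $T(\hat{i},j) = \hat{i}+d_{(i,j)}$, where $d_{(i,j)}$ is the drift distance of the continent associated to $(i,j)$. Thus $T(\hat{i},j)-\hat{i} = d_{(i,j)}$, and the clause of Definition~\ref{def:drift} demanding that ``relative southwest-northeast positions of special cells are maintained'' translates exactly to $d_{(i,j)} \le d_{(i',j')}$ whenever $(i,j)$ is weakly southwest of $(i',j')$ among special boxes.

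For the backward direction, given $T$ satisfying (a) and (b), define $d_{(i,j)} := T(\hat{i},j)-\hat{i}$ for each special $(i,j)$. Semistandardness up columns forces $T(\hat{i},j)\ge \hat{i}$, so $d_{(i,j)}\ge 0$; the flag inequality $T(\hat{i},j)\le b_{\hat{i}}$ keeps the drifted column top inside $B(v,w)$, and by the shape of $B(v,w)$ this bound propagates to the whole continent. I drift the continent of $(i,j)$ rigidly along the northeast diagonal by $d_{(i,j)}$ to obtain a candidate $\mathcal{D}$. Condition (b) supplies the relative-ordering property of Definition~\ref{def:drift}, and (since all motion is along diagonals) prevents a southwest continent from overtaking a northeast neighbor, giving non-overlap. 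Once $\mathcal{D}\in {\rm drift}(v,w)$ is confirmed, I check $\Psi(\mathcal{D})=T$: the two tableaux agree at each $(\widehat{i'},j)$ by the definition of $d_{(i',j)}$, and at every other box both satisfy (\ref{eqn:prescription}) (the former by construction, the latter by (a)), so the common rule, processed down columns from right to left, forces agreement.

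The main obstacle I expect is making rigorous the claim that (b) alone suffices to prevent any two drifted continents from overlapping inside $B(v,w)$. This should reduce to examining pairs of continents sharing a boundary in ${\rm Pangaea}(v,w)$ and using (b) applied to the specific special boxes flanking that boundary to rule out collision; the equal arm/leg condition defining ``special'' should make these pairs easy to identify. A subsidiary issue is verifying that (a), together with the boundary conventions (\ref{eqn:convention1})--(\ref{eqn:convention2}), forces the filling $T(i,j)=i$ on columns lacking any special box, consistently with Lemma~\ref{lemma:map g}(iii); this should follow by a short induction down such a column.
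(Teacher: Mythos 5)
The forward direction of your argument matches the paper and is fine. In the backward direction, your overall strategy---read off $d_{(i,j)}:=T(\hat{i},j)-\hat{i}$ for each special box, drift each continent rigidly by that amount, check that this yields an element of ${\rm drift}(v,w)$, and finally use (a) to conclude $\Psi(\mathcal{D})=T$---is the same as the paper's. However, your justification that each drifted continent fits inside $B(v,w)$ has a real gap.

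You claim that the flag inequality $T(\hat{i},j)\le b_{\hat{i}}$ keeps the drifted column top $(\hat{i}+d,j+d)$ inside $B(v,w)$, and that ``by the shape of $B(v,w)$ this bound propagates to the whole continent.'' The first half is correct, but the propagation step fails. The box $(\hat{i},j)$ is the \emph{leg end} of the special box, so after drifting it is the northwest-most box in its continent's column strip; it does \emph{not} dominate the rest of the continent. A box near the end of the arm, say $(i,\lambda(w)_i)$, drifts to $(i+d,\lambda(w)_i+d)$, which is strictly lower and strictly further east than $(\hat{i}+d,j+d)$. Since $B(v,w)$ is a Young diagram, column heights decrease as you move east, so containment of $(\hat{i}+d,j+d)$ in $B(v,w)$ gives no control over $(i+d,\lambda(w)_i+d)$. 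In fact one can easily set up a shape where $(\hat{i}+d,j+d)\in B(v,w)$ but a box at the arm end, after the same rigid shift, lands outside $B(v,w)$; the flag on $(\hat{i},j)$ alone simply does not see it.

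The paper closes this gap precisely by invoking condition (b) at this step, not just in the non-overlap step as your proposal does. Every box $(a,b)$ of a continent is weakly southwest of some northeast corner of $\lambda(w)$, i.e.\ of a $1\times 1$ continent $(i',j')$; condition (b) forces $d_{(i,j)}\le d_{(i',j')}$, so $(a+d,b+d)\le(i'+d_{(i',j')},j'+d_{(i',j')})$ coordinatewise, and the latter point equals $(T(i',j'),\,j'+T(i',j')-i')$, which is inside $B(v,w)$ exactly because $T(i',j')\le b_{i'}$. Down-closedness of $B(v,w)$ then finishes. To repair your argument, move the use of (b) forward so it controls containment (not only non-overlap) in exactly this way. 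Your candid flagging of the non-overlap step as the main obstacle is fair---the paper is also terse there---but the containment step as you've written it is the bigger issue.
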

\begin{proof}
Let ${\mathcal D}\in {\rm drift}(v,w)$. We show that $\Psi({\mathcal D})$ satisfies (a) and (b). The condition (a) holds by the
definition of $\Psi$. The condition (b) follows since $T(\hat{i},j)-\hat{i}$ equals the distance drifted by the continent containing
$(i,j)$, $T(\widehat{i'},j')-\widehat{i'}$ equals the distance drifted by the continent containing $(i',j')$, and the continent
 associated to $(i,j)$ cannot move further northeast than the continent associated to $(i',j')$.

Conversely, we now show
that every $T\in {\rm SSYT}(\lambda(w),{\bf b})$ satisfying (a) and (b) is in the image of $\Psi$. Consider the (putative)
drift configuration ${\mathcal D}$ defined as follows. To each continent of
${\mathcal D}$ associated to a special box $(i,j)$, shift it northeast by
$T({\hat i},j)-\hat{i}$ units. We first prove that each continent fits inside $B(v,w)$: Consider the continent with special box $(i,j)$.
If part of the continent is shifted out of the boundary $B(v,w)$, then by (b) there is some northeast corner of $\lambda(w)$
(i.e., a $1\times 1$ continent) that has been pushed out of $B(v,w)$ by that part of the continent.
Hence the corresponding $T$ is not in ${\rm SSYT}(\lambda(w),{\bf b})$, a contradiction.

Now, the condition (b) guarantees
that ${\mathcal D}$ can in fact be obtained without any continents overlapping.
Hence ${\mathcal D}\in {\rm drift}(v,w)$. Finally, by (a), we have $\Psi({\mathcal D})=T$.
\end{proof}

Given any $T_0\in {\rm SSYT}(\lambda(w),{\bf b})$, suppose
\begin{equation}
\label{eqn:thingtoavoid}
\mbox{there is a box $(i,j)$ in $\lambda(w)$ which is not a northeast corner and (\ref{eqn:prescription}) does
not hold}
\end{equation}
for $T=T_0$. Furthermore let us assume $(i,j)$ is chosen such that $j$ is smallest, with ties broken by taking $i$ smallest.

A brief outline of the remainder of the proof is as follows. Starting from $T_0$, we
construct a sequence $T_1,T_2,\dots\in {\rm SSYT}(\lambda(w),{\bf b})$ with increasing depth until we arrive at a $T_k$ that fails
(\ref{eqn:thingtoavoid}). This $T_{k}$ is proved to be in the image of $\Psi$. Then we show $\mathcal{D}:=\Psi^{-1}(T_k)\in{\rm drift}(v,w)$ satisfies ${\rm wt}(\mathcal{D})\geq {\rm depth}(T_0)$. From this the result follows; see (\ref{eqn:finale}).


Then let
$T_1\in {\rm SSYT}(\lambda(w),{\bf b})$
be the augmentation of $T_0$ obtained by setting
\begin{equation}
\label{eqn:augment}
T_1(i,j)=\min\{T_0(i+1,j)-1, T_0(i-1,j+1)+1\}
\end{equation}
and letting all other entries in $T_1$ be the same as in $T_0$.

Now we show that $T_1 \in {\rm SSYT}(\lambda(w),{\bf b})$.
To do this, we need to check semistandardness conditions
\begin{equation}\label{eq:T1_1}T_1(i,j-1)\le T_1(i,j)\le T_1(i,j+1)\end{equation}
and
\begin{equation}\label{eq:T1_2}T_1(i-1,j)< T_1(i,j)< T_1(i+1,j).\end{equation}
We first check (\ref{eq:T1_1}). The second inequality is trivial from (\ref{eqn:augment}). For the first inequality, we have
$$\aligned
&T_0(i,j-1)\le T_0(i+1,j-1)-1\le T_0(i+1,j)-1,\\
&T_0(i,j-1)\le T_0(i-1,j)+1\le T_0(i-1,j+1)+1.\\\endaligned$$
(The second line above uses the minimality of our choice of $(i,j)$.)
Hence $$T_1(i,j-1)=T_0(i,j-1)\le \min\{T_0(i+1,j)-1,T_0(i-1,j+1)+1\}=T_1(i,j).$$
Similarly for (\ref{eq:T1_2}), the second inequality is similarly trivial from (\ref{eqn:augment}),
whereas for the first inequality, we have
$$\aligned
&T_0(i-1,j)< T_0(i,j)\le T_0(i+1,j)-1,\\
&T_0(i-1,j)\le T_0(i-1,j+1)< T_0(i-1,j+1)+1,\\\endaligned$$
and hence $$T_1(i-1,j)=T_0(i-1,j)< \min\{T_0(i+1,j)-1,T_0(i-1,j+1)+1\}=T_1(i,j).$$

Next, we claim that
$${\rm depth}(T_1)\ge {\rm depth}(T_0).$$
The difference in depth between $T_1$ and $T_0$ can only be blamed on the boxes in positions $(i,j),(i,j+1)$
and $(i+1,j)$. Without loss of generality, let us assume that each of the latter two boxes
actually lie in $\lambda(w)$ (at least one of $(i,j+1)$ or $(i+1,j)$ is in $\lambda(w)$ since $(i,j)$ is assumed to not be a
northeast corner; analyzing the resulting cases is similar and easier).
Taking this into account leads to:
$$\aligned {\rm depth}(T_1)-{\rm depth}(T_0)&=T_1(i,j)-T_0(i,j)\\
&+\min\{T_1(i,j+1)-T_1(i,j), T_1(i,j+1)-T_1(i-1,j+1)-1\}\\
&-\min\{T_0(i,j+1)-T_0(i,j), T_0(i,j+1)-T_0(i-1,j+1)-1\}\\
&+\min\{T_1(i+1,j)-T_1(i+1,j-1), T_1(i+1,j)-T_1(i,j)-1\}\\
&-\min\{T_0(i+1,j)-T_0(i+1,j-1), T_0(i+1,j)-T_0(i,j)-1\}.
\endaligned
$$
For simplicity, set
\[y:=T_r(i+1,j), \ z:=T_r(i,j+1), \ u:=T_r(i+1,j-1), \ v:=T_r(i-1,j+1)\]
for $r=0,1$. Also let
\[x:=T_0(i,j), \ x':=T_1(i,j)=\min(y-1,v+1).\]
Using $\min(a,b)=(a+b-|a-b|)/2$, we have
$$\aligned {\rm depth}(T_1)-{\rm depth}(T_0) &=x'-x+\min(z-x',z-v-1)-\min(z-x,z-v-1)\\&\quad+\min(y-x'-1,y-u)-\min(y-x-1,y-u)\\
&=x'-x+\\
&\ \ \ \ \frac{2z-x'-v-1-|x'-v-1|}{2}-\frac{2z-x-v-1-|x-v-1|}{2}\\&\quad+\frac{2y-x'-u-1-|x'-u+1|}{2}-\frac{2y-x-u-1-|x-u+1|}{2}\\
&=\frac{1}{2}\big{[}(|x-u+1|+|x-v-1|)-(|x'-u+1|+|x'-v-1|)\big{]}\\
&=\frac{1}{2}[f(x)-f(x')]
\endaligned$$
where
\[f(a):=|a-u+1|+|a-v-1|.\]
It is elementary that $f(a)$ takes the minimal value throughout (real) interval
\[[\min(v+1,u-1),\max(v+1,u-1)].\]
Notice $x'$ is in the interval: $x'\ge \min(v+1,u-1)$ since $y\ge u$. On the other hand,
$x'\le v+1\le \max(v+1,u-1)$. Since $f$ attains its minimum at $x'$ then  $f(x)-f(x')\ge 0$ and so
${\rm depth}(T_1)\ge {\rm depth}(T_0)$ as required.

Repeating this procedure while the undesirable (\ref{eqn:thingtoavoid}) still is true,
we obtain successively $T_0, T_1, T_2, T_3, \cdots$. We claim that after finite number of iterations
(\ref{eqn:thingtoavoid}) finally fails for some
$T_k$, $k\ge 0$. To see this, let the
vector $\textbf{u}(T)=(u_1,u_2,\dots,u_{|\lambda(w)|})$
measure how ``far'' is $T\in {\rm SSYT}(\lambda(w),{\bf b})$ from failing (\ref{eqn:thingtoavoid}): Order the boxes in $\lambda(w)$ from
left to right, and in each column from bottom up. For example, in Example \ref{exa:433}, the order is
$$\ktableau{{3}&{6}&{9}\\{2}&{5}&{8}&{11}\\{1}&{4}&{7}&{10}}$$
For each $1\le i\le |\lambda(w)|$, define $u_i$ to be $0$ if the $i$-th box is a northeast corner or if (\ref{eqn:prescription}) holds,
otherwise let $u_i=1$. Then ${\bf u}(T)=(0,0,\dots,0)$ means that we are in the good case that
(\ref{eqn:thingtoavoid}) fails.
We define a pure reverse lex order on $\{0,1\}^{|\lambda(w)|}$: given
${\bf u}, {\bf u'}\in \{0,1\}^{|\lambda(w)|}$,
we say that ${\bf u}>{\bf u'}$ if
\[u_{|\lambda(w)|}=u'_{|\lambda(w)|}, u_{|\lambda(w)|-1}=u'_{|\lambda(w)|-1},\cdots, u_{i+1}=u'_{i+1}, u_i>u'_i,\]
for some $i$. It is straightforward to check that,
at each step $t$, we have ${\bf u}(T_t)>{\bf u}(T_{t+1})$
and hence the above procedure
must eventually terminate, say at step $k$, with ${\bf u}(T_k)=(0,0,\ldots,0)$, as desired.

Let $T=T_k$ be the output of the above procedure. Now we want to apply Lemma \ref{lemma:image of g} to conclude that $T_k(i,j)$ is in the
image of $\Psi$, by
verifying its conditions (a) and (b).

Since
(\ref{eqn:thingtoavoid}) fails, every box that is not a northeast corner has (\ref{eqn:prescription}) holding.
In particular, this includes every box described by (a) and so (a) holds.

To check (b), let ${\mathcal L}:=\hat{i}-i$ be the leg length of $(i,j)$. Since
$(i,j)$ is special, ${\mathcal L}=|{\rm arm}(i,j)|$, and moreover, we can apply the argument in the proof of Lemma~\ref{lemma:map g}(iii) to the subset of the Young diagram $\lambda(w)$ consisting of those boxes strictly above row $i$ and weakly to the right of column $j$,
and conclude that the following boxes lie in $\lambda(w)$:
$$(\hat{i},j+1), (\hat{i}-1,j+2),\dots, (\hat{i}-{\mathcal L}+1, j+{\mathcal L}).$$
In particular, the boxes
$$(\hat{i},j), (\hat{i}-1,j+1), (\hat{i}-2,j+2),\dots, (\hat{i}-{\mathcal L}, j+{\mathcal L})$$
are not the northeast corners of $\lambda(w)$, hence (\ref{eqn:prescription}) holds for them by the construction of $T=T_k$. By (\ref{eqn:prescription}), we have
\begin{equation}
\label{eqn:bycons}
T(\hat{i}-m,j+m)\geq T(\hat{i},j)-m, \quad \textrm{for $m=0,1,\dots,\mathcal{L}$.}
\end{equation}

Since $(\widehat{i'},j')$ is to the right of $(\widehat{i'}, j+(\hat{i}-\widehat{i'}))$, we have
\[T(\widehat{i'},j')\ge T(\widehat{i'},j+(\hat{i}-\widehat{i'}))=T(\widehat{i}-(\widehat{i}-\widehat{i'}),j+(\hat{i}-\widehat{i'}))\ge T(\hat{i},j)-(\hat{i}-\widehat{i'}),\]
where the last inequality holds because of (\ref{eqn:bycons}) for $m={\hat i}-\widehat{i'}$, and since the hypothesis that $(i,j)$ is weakly southwest of $(i',j')$
 implies ${\hat i}-\widehat{i'}\leq \mathcal{L}-1$. Thus,
$$ T(\hat{i},j)-\hat{i}\le T(\widehat{i'},j')-\widehat{i'}.$$
Therefore condition (b) holds.

 Concluding, there exists ${\mathcal D}\in {\rm drift}(v,w)$ such that $\Psi({\mathcal D})=T_k$ and ${\rm wt}({\mathcal D})={\rm
 depth}(T_k)$. Then
 \begin{equation}
 \label{eqn:finale}
 {\rm wt}({\mathcal D})={\rm depth}(T_k)\ge {\rm depth}(T_{k-1})\ge\cdots\ge{\rm depth}(T_0)
 \end{equation}
  and so $\deg P_{v,w}(q)\ge \deg H_{v,w}(q)$.

This completes the proof of the theorem.\qed

\excise{

\subsection{Another basis of the Hecke algebra}

In order to provide further avenues of comparison between these two families of polynomials,
we find it interesting to define a new basis of ${\mathcal H}$, as follows:
\begin{defprop} For each $w\in S_n$, define the basis $\{{\widetilde C}'_w\}$ by
$$\widetilde{C}'_w:=(q^{-\frac{1}{2}})^{\ell(w)}\sum_{v\le w} H_{v,w}(q) T_v.$$
The transition matrix $M$ between the two bases $\{\widetilde{C}'_w\}$ and $\{C'_w\}$
is a unimodular matrix, i.e. the entries are in $\mathbb{Z}[q^{\pm \frac{1}{2}}]$ and the determinant is $\pm 1$.
\end{defprop}
\begin{proof}
Let $N={n\choose 2}$ and order the permutations $(v_1,\dots, v_N)$ in $S_n$ such that they are compatible with the increasing Bruhat
order. Then the transition matrix $M_1$ (resp. $M_2$) between bases $\{C'_w\}$ (resp. $\{\widetilde{C}'_w\}$) and $\{T_w\}$ is
lower-triangular, unimodular with $1$'s on the diagonal (the latter assertion holds since $K_{w,w}(q)=1$ since $e_w$
is always a smooth point of $X_w$). Then $M=M_1M_2^{-1}$ is also unimodular.
\end{proof}

In small cases, the two bases are equal.

\begin{proposition}
If $n\le 4$, then $\widetilde{C}'_w=C'_w$ for any $w\in S_n$. Also, for a simple reflection $s=s_i$, we have
$\widetilde{C}'_s=C'_s=q^{-\frac{1}{2}}(T_{id}+T_s)$ because $X_s$ is smooth.
\end{proposition}

For $n=5$, of the $120$ basis elements, only in $10$ cases do we have
$\widetilde{C}'_w\neq C'_w$, the complete list being:
$$\aligned
&\widetilde{C}'_{45123}&= &-q^{-1} C'_{14235}+C'_{45123}
&\widetilde{C}'_{42513}&=&q^{-1} C'_{21435}+C'_{42513}\\
&\widetilde{C}'_{35142}&=&q^{-1} C'_{13254}+C'_{35142}
&\widetilde{C}'_{34512}&=&-q^{-1} C'_{13425}+C'_{34512}\\
&\widetilde{C}'_{52341}&=&q^{-\frac{3}{2}} C'_{21354}+C'_{52341}
&\widetilde{C}'_{53412}&=&-q^{-1} C'_{31524}+C'_{53412}\\
&\widetilde{C}'_{53241}&=&q^{-1} C'_{32154}+C'_{53241}
&\widetilde{C}'_{52431}&=&q^{-1} C'_{21543}+C'_{52431}\\
&\widetilde{C}'_{45312}&=&(-q^{-\frac{1}{2}}+q^{-\frac{3}{2}}) C'_{14325}+C'_{45312}
&\widetilde{C}'_{45231}&=&-q^{-1} C'_{24153}+C'_{45231}\endaligned$$

The transition matrix $M$ contains an entry $(-q^{-\frac{1}{2}}+q^{-\frac{3}{2}})$ which is not positive.
In addition, $M^{-1}$ contains $(q^{-\frac{1}{2}}-q^{-\frac{3}{2}})$ which is also not positive.

By way of contrast with the Kazhdan-Lusztig basis, the structure constants for multiplication in the
$\{{\widetilde C}'_w\}$ basis are not positive in any sense we could determine. For example, if
$w=42513$, $v=21435$, $s=s_1=21345$, then
$\widetilde{C}'_{sw}\widetilde{C}'_s=\widetilde{C}'_w+(1-q^{-1})\widetilde{C}'_v+\widetilde{C}'_{21543}$.
}

\section{A ball of drift configurations}
\subsection{Construction of ${\rm KL}_{v,w}$}
In order to emphasize the combinatorial relations of drift configurations to Young tableaux, consider an
equivalent formulation of drift configurations: A {\bf semistandard (ordinary) drift tableau} $T$
bijectively associated to ${\mathcal D}$ is a filling of each continent $C$ of ${\rm Pangaea}(v,w)$ by the distance $C$
has moved from ${\rm Pangaea}(v,w)$.

Similarly, a {\bf set-valued drift tableau} is a filling of each
continent by some non-empty set
of nonnegative integers; it is {\bf semistandard} if any ordinary drift tableau it contains (in the obvious sense) is semistandard.
It is {\bf limit semistandard} if it contains at least one semistandard (ordinary) drift tableau. The {\bf empty-face drift tableau}
${\mathcal E}_{v,w}$ is the set-valued drift tableau that is the union of all semistandard ordinary ones.

Define ${\rm KL}_{v,w}$ to be the
simplicial complex whose faces are indexed by limit semistandard drift tableau
and where face containment is by reverse containment of drift tableau. In particular, the vertices are labeled by limit
semistandard tableaux $(b\not\mapsto y)$ obtained by removing precisely one entry $y$ from a set ${\mathcal E}_{v,w}(b)$ of the box
$b\in \lambda(w)$, provided $|{\mathcal E}_{v,w}(b)|>1$. (It will be convenient to also consider {\bf phantom vertices} which are those
$(b\not\mapsto y)$ where $|{\mathcal E}_{v,w}(b)|=1$; these become honest vertices after coning over ${\rm KL}_{v,w}$.)

This gives an example of a tableau complex in the sense of \cite{KMY}. See Figure~4 for an example of ${\rm KL}_{v,w}$.

\begin{figure}[h]
    \centering 
    \includegraphics[width=.8\columnwidth]     {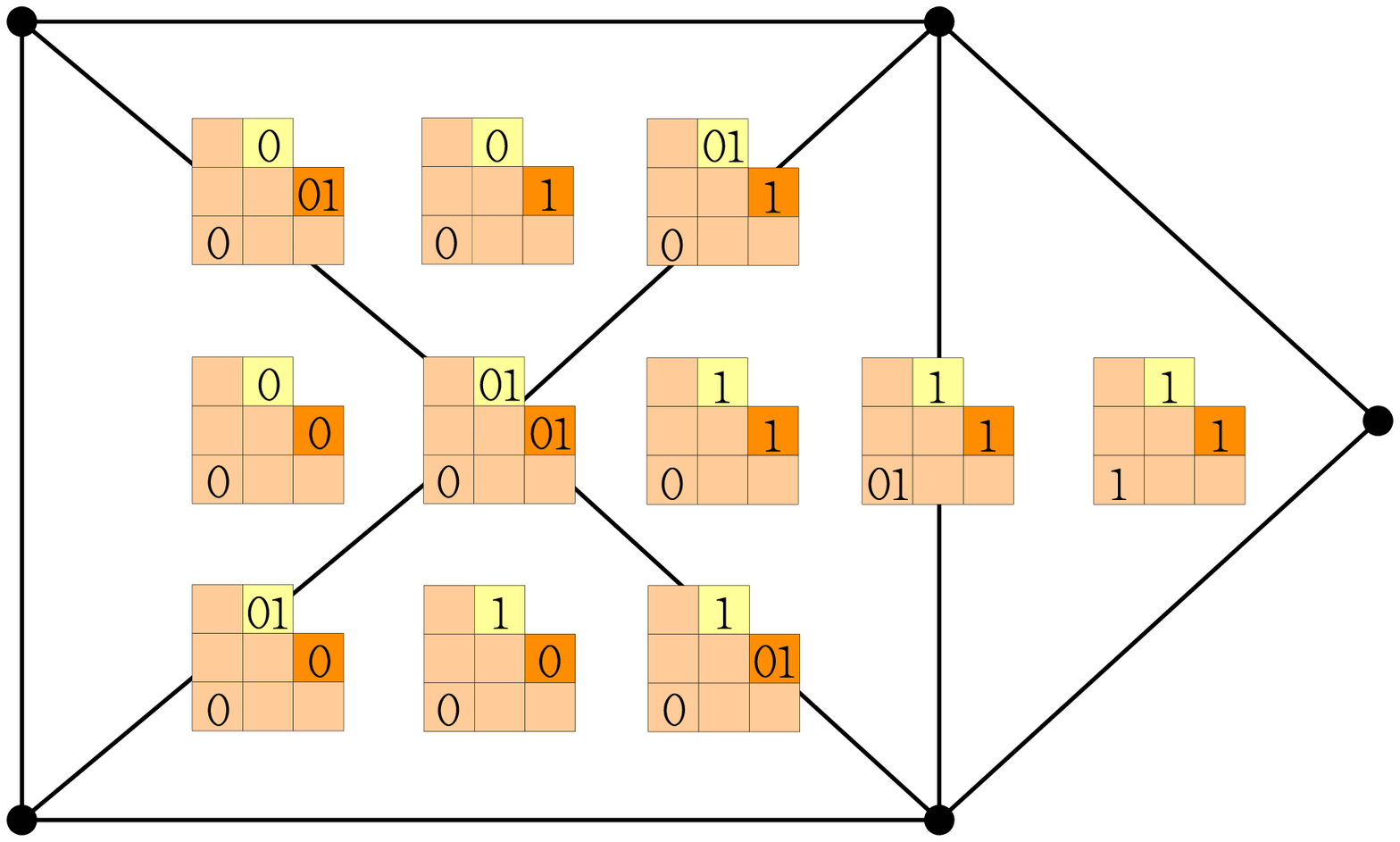}
\caption{Continuing Example~\ref{exa:433}; the interior faces of the $2$-dimensional ${\rm
KL}_{234651789\underline{10},\underline{10}954382761}$}
    \end{figure}

The claims in Theorem~\ref{thm:main} about the structure of ${\rm KL}_{v,w}$ then follow immediately from
\cite[Theorem~2.8]{KMY}. This was, we conclude that the interior faces of ${\rm KL}_{v,w}$ are labeled by
semistandard set-valued drift tableaux while the exterior faces are labeled by non-semistandard but limit semistandard tableaux.
Also the codimension of a face ${\mathcal D}$ is $|{\mathcal D}|-\#{\rm continents}$, the number of ``extra'' entries of ${\mathcal D}$.

\subsection{$K$-polynomials of ${\rm KL}_{v,w}$}
Let us take this opportunity to formalize a connection between the
$K$-polynomials of ${\rm KL}_{v,w}$ and $P_{v,w}(q)$. We will utilize facts collected
about general tableau complexes from \cite[Section~4]{KMY}.
Let $V$ be the set of vertices of a simplicial complex $\Delta$ and set $R=\Bbbk[\Delta]$ to be the polynomial
ring in variables $x_{\mathfrak v}$ for ${\mathfrak v}\in V$. This is the ambient ring for the {\bf Stanley-Reisner ideal}
$I_{\Delta}=\left\langle \prod_{{\mathfrak v}\in F} x_{\mathfrak v}: F \mbox{\ is not a face of $\Delta$}\right\rangle$
of $\Delta$, and $R/I_{\Delta}$ is the {\bf Stanley-Reisner ring}. We use the alphabet
${\bf t}_{\mathfrak v}=\{t_{\mathfrak v}:{\mathfrak v}\in V\}$ for the finely graded Hilbert series ${\rm Hilb}(R/I_\Delta; {\bf t})$
and $K$-polynomials ${\mathcal K}(R/I_{\Delta},{\bf t})$.

Let us define a family of polynomials for $v\leq w$ where $w$ is covexillary.
We will see this is a hybrid of the $K$-polynomial of ${\rm KL}_{v,w}$
and the Kazhdan-Lusztig polynomial $P_{v,w}(q)$:
\begin{equation}
\label{eqn:hybrid}
{\mathfrak P}_{v,w}(\beta; {\bf t})=\sum_{{\mathcal D}\in SVDT(v,w)} \beta^{|{\mathcal D}|-\#{\rm continents}(v,w)}\prod_{b\in
\lambda(w)}\prod_{y\in {\mathcal D}(b)}(1-t_{(b\not\mapsto y)}),
\end{equation}
where $SVDT(v,w)$ is the set of set-valued drift tableaux associated to drift configurations in ${\rm drift}(v,w)$,  $\#{\rm
continents}(v,w)$ is the number of continents in ${\rm Pangaea}(v,w)$, $|{\mathcal D}|$ is the number of entries in ${\mathcal D}$.
There are a number of interesting specializations of this polynomial. Here we do not assume $|{\mathcal E}_{v,w}(b)|>1$, i.e.,
$(b\not\mapsto y)$ might be a phantom vertex.

By the ballness/sphereness claim of ${\rm KL}_{v,w}$ from Theorem~\ref{thm:main}, together with \cite[Theorem~4.3]{KMY} it follows that
\begin{equation}\label{eqn:ballsphereclaim}
{\mathfrak P}_{v,w}(-1;{\bf t})={\mathcal K}(R/I_{{\rm KL_{v,w}}}; {\bf t})
\end{equation}
\excise{More precisely, one can cone over ${\rm KL}_{v,w}$ so that every phantom vertex becomes an actual vertex ${\mathfrak v}$ in this
auxiliary complex $\Delta$. Then the left hand side of (\ref{eqn:ballsphereclaim}) computes the $K$-polynomial of $\Delta$. However,
since coning does not change the $K$-polynomial, the asserted equality of (\ref{eqn:ballsphereclaim}) holds.}

\excise{
For any simplicial complex $\Delta$, if we introduce new elements of the ground set that the complex is defined on and cone $\Delta$ over
them, the $K$-polynomial does not change.
In the case $\Delta={\rm KL}_{v,w}$ this can be achieved by adding ``$\infty$'' to each box of $b\in\lambda(w)$ such that  $|{\mathcal
E}_{v,w}(b)|=1$, to obtain a tableau ${\hat {\mathcal E}_{v,w}}$.
Then for all such $b$, if $y\in b$, then ${\mathfrak v}=(b\not\mapsto y)$ is a vertex and $(b\not\mapsto  \infty)$ is a cone point in the
tableau complex obtained by coning
${\rm KL}_{v,w}$. Here ${\hat {\mathcal E}_{v,w}}$ is the empty face tableau.
Then the stated results of \cite{KMY} show the left hand side of (\ref{eqn:ballsphereclaim}) actually
equals the $K$-polynomial of ${\rm KL}_{v,w}$ after coning with these points.}

One can consider a vertex decomposition of any complex $\Delta$ at a vertex ${\mathfrak v}$. This is given by
$\Delta={\rm del}_{\mathfrak v}(\Delta)\cup {\rm star}_{\mathfrak v}(\Delta)$ where ${\rm del}_{\mathfrak v}(\Delta)=\{F\in
\Delta:{\mathfrak v}\not\in F\}$ is the {\bf deletion} of ${\mathfrak v}$
and ${\rm star}_{\mathfrak v}(\Delta)=\{F\in \Delta:F\cup\{{\mathfrak v}\}\in \Delta\}$ is the {\bf star} of ${\mathfrak v}$.
Automatically one has, for ${\mathfrak v}=(b\not\mapsto y)$
\begin{equation}
\label{eqn:vertexdec}
{\mathcal K}(R/I_{{\rm KL}_{v,w}}; {\bf t})=t_{(b\not\mapsto y)}{\mathcal K}(R/I_{{\rm del}_{(b\not\mapsto y)}({\rm KL}_{v,w})}; {\bf t})
+(1-t_{(b\not\mapsto y)}){\mathcal K}(R/I_{{\rm star}_{(b\not\mapsto y)}({\rm KL}_{v,w})}; {\bf t}).
\end{equation}
By tracing the specializations below, one should eventually
interpret recursions from \cite{LS:KL} for $P_{v,w}(q)$ using (\ref{eqn:vertexdec})
and thus vertex decompositions of ${\rm KL}_{v,w}$. We do not pursue this here.

Consider
\begin{equation}
{\mathfrak P}_{v,w}(-1; t_{(b\not\mapsto y)}\mapsto 1-x_y)=\sum_{{\mathcal D}\in SVDT(v,w)} (-1)^{|{\mathcal D}|-\#{\rm continents}(v,w)}
{\bf x}^{\mathcal D},
\end{equation}
where
\[{\bf x}^{\mathcal D}=\prod_{i\geq 0}x_i^{\tiny{\#\mbox{$i$'s appearing in ${\mathcal D}$}}}.\]

Another specialization is given by
\begin{equation}
\label{eqn:likeschur}
{\mathfrak P}_{v,w}(0; t_{(b\not\mapsto y)}\mapsto 1-x_y)=\sum_{{\mathcal D}\in SSDT(v,w)}
{\bf x}^{\mathcal D},
\end{equation}
where $SSDT(v,w)$ is the set of ordinary, semistandard drift tableau associated to $v,w$. (In setting $\beta=0$ we take the convention
that $0^0=1$ in (\ref{eqn:hybrid}).)

Finally, by considering the principal specialization of (\ref{eqn:likeschur}) we have
\[{\mathfrak P}_{v,w}(0; t_{(b\not\mapsto y)}\mapsto 1-q^y)=P_{v,w}(q).\]


\section{The proof of Theorem \ref{thm:main}(I)}

\subsection{Proof of $Q_{v,w}(q)=P_{v,w}(q)$}
\label{sec:PeqQ}
We give a weight-preserving bijection between ${\rm drift}(v,w)$ and the trees weight-enumerated by
Lascoux's rule \cite{Lascoux} for
$P_{v,w}(q)$. We mostly follow the presentation of his rule found in \cite[6.3.29]{Billey.Lakshmibai}.

Given $\mathcal{D}\in {\rm drift}(v,w)$, construct a rooted, edge-labeled tree ${\mathcal T}$ as follows.
Associate to each continent $C$ a non-root vertex of ${\mathcal T}$.
Moreover if the special box $b$ of $C$ is southwest of the special box $b'$ of an adjacent continent $C'$,
then we draw an edge between the corresponding vertices.
If there is no special box strictly southwest of $b$, then the corresponding vertex
is joined to the root of ${\mathcal T}$.

Thus, each $1\times 1$ continent $C=\{(h,\lambda(w)_h)\}$ (equivalently, those that come from northeast corners of $\lambda(w)$)
corresponds to a leaf $p$ of ${\mathcal T}$.
Now we bound the edge incident  to $p$ by $b_h-h$, where
\[b_h=\max \{m \ | \ B(v,w)_{m}\geq \lambda(w)_h+m-h\}.\]
Let $DL({\mathcal T})$ be the
set of all edge labelings of ${\mathcal T}$ by nonnegative integers such that the labels weakly increase from root
to leaf. For any edge labeled tree ${\mathcal G}$ let $|{\mathcal G}|$ be the sum of the edge labels of ${\mathcal G}$.

For example, below are the trees for drift configurations in Figure \ref{figure3}. The framed number below each leaf is the bound for that
leaf.
  \begin{figure}[h]
    \centering 
    \includegraphics[width=1\columnwidth]{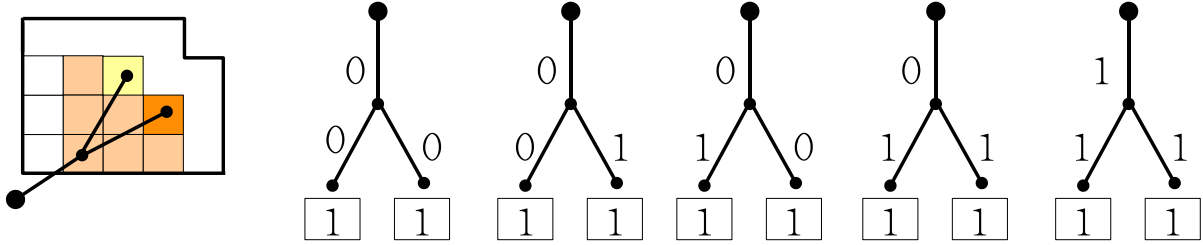}
\caption{\label{figure5} Edge labeled trees from $DL({\mathcal T})=EL({\mathcal T})$,
respectively corresponding to drift configurations in Figure \ref{figure3}.}
    \end{figure}

\begin{lemma}
There is a bijection $\Phi:{\rm drift}(v,w) \to DL({\mathcal T})$ such that ${\rm wt}({\mathcal D})=|\Phi({\mathcal D})|$.
\end{lemma}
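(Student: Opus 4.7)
The plan is to define $\Phi(\mathcal{D})$ by labeling, for each non-root vertex $C$ of $\mathcal{T}$, the unique edge incident to $C$ and going root-ward with the value $d_C$, where $d_C \in \mathbb{Z}_{\ge 0}$ is the diagonal distance that the continent $C$ has drifted from its position in $\mathrm{Pangaea}(v,w)$. Because each continent contributes exactly one such edge, the weight identity $|\Phi(\mathcal{D})| = {\rm wt}(\mathcal{D})$ is immediate: both sides sum the drift distances across all continents.

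To verify $\Phi(\mathcal{D}) \in DL(\mathcal{T})$, I would use that, by the construction of $\mathcal{T}$, an edge between $C$ and its parent $C'$ witnesses that the special box of $C'$ lies strictly southwest of that of $C$. Iterated along any root-to-leaf path $C_1, \ldots, C_k$, this yields a chain of special boxes in strict SW-NE order, and the second bullet of Definition~\ref{def:drift}, demanding that relative SW-NE positions of special cells be maintained under drifting, forces $d_{C_1} \le \cdots \le d_{C_k}$, which is precisely the weak-increase condition. The leaf bound $d \le b_h - h$ reflects that a leaf labels a $1\times 1$ corner continent $\{(h, \lambda(w)_h)\}$, which can drift diagonally by at most $b_h - h$ units before exiting $B(v,w)$, by the definition of $b_h$.

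To construct the inverse, I would, given $L \in DL(\mathcal{T})$, shift each continent $C$ diagonally northeast from Pangaea by the value that $L$ assigns to $C$'s root-ward edge, producing a candidate $\mathcal{D}_L$. Weak increase along tree paths directly supplies the required preservation of relative SW-NE positions for pairs of special cells that are comparable in SW-NE order (the incomparable pairs are unconstrained by Definition~\ref{def:drift}), and propagating the leaf bounds via weak increase keeps every continent inside $B(v,w)$. The main obstacle I anticipate is verifying non-overlap of the drifted continents: since $\mathrm{Pangaea}(v,w)$ already tiles its support disjointly, I expect the argument to be local, showing that an overlap between two continents $C$ and $C'$ would force a strict decrease in labels along the path in $\mathcal{T}$ between them, contradicting the weak-increase hypothesis. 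This step will rest on a careful description of how the continents are shaped within $\lambda(w)$, namely that their northeast boundaries staircase along diagonals in a way compatible with drift, so that the only way two continents can collide is if the NE one lags behind the SW one in its displacement.
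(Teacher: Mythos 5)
Your plan mirrors the paper's proof exactly at the level of the construction: $\Phi$ labels the edge below a continent $C$ by its drift distance, the weight identity is definitional, and the inverse shifts each continent by the corresponding label. The gap is in how you justify weak increase of labels along root-to-leaf paths. You attribute $d_{C_1}\le\cdots\le d_{C_k}$ to the second bullet of the definition of a drift configuration (relative SW-NE positions of special cells are maintained). Under the natural reading of that clause --- if the special box $b=(i,j)$ is weakly southwest of $b'=(i',j')$ in Pangaea then after drifting by $d$ and $d'$ it remains so --- one only obtains $d-d'\le\min(i'-i,\,j'-j)$, which is weaker than $d\le d'$ whenever the special boxes are not in adjacent rows and columns. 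Concretely, in Example~\ref{exa:433} with $\lambda(w)=(4,4,3)$, letting the special box $(1,2)$ of the large continent drift by $1$ while the $1\times 1$ continent $\{(2,4)\}$ stays put sends $(1,2)\mapsto(2,3)$, which is still weakly SW of $(2,4)$, so your criterion is satisfied; yet the drifted box $(1,3)\mapsto(2,4)$ collides with the stationary continent. So the SW-NE maintenance clause alone does \emph{not} force weak increase; it is the non-overlap requirement that does, which is exactly what the paper invokes here. You do recognize that non-overlap is the crux when constructing the inverse, but you leave it as something you ``expect'' rather than prove, and the same verification is also what is missing from your forward direction. (The paper's own proof is likewise very terse at this step, asserting the implication and declaring the remainder ``easy to check,'' but it at least attributes weak increase to the correct hypothesis.) To close the gap you would need a concrete argument that adjacent continents $C$ (parent, SW) and $C'$ (child, NE) overlap whenever $d_C>d_{C'}$, together with the converse for reconstructing $\Phi^{-1}$; the structure of continents (they are order ideals bounded by staircase shapes in $\lambda(w)$) is what makes this work.
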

\begin{proof}
Define $\Phi({\mathcal D})$ to be the edge labeling of ${\mathcal T}$ such that the edge associated to a continent $C$
(i.e., the edge whose child end is the vertex associated to $C$)
is labeled by the distance that $C$ has drifted in ${\mathcal D}$. That the labels are weakly increasing
in $\Phi({\mathcal D})$ is implied by the condition that the continents do not overlap in ${\mathcal D}$.
Note that if $C$ is a $1\times 1$ continent then $b_h -h$ is the largest distance that $C$ can drift inside $B(v,w)$; this accounts for
the leaf bound (see Figure~5 for a diagram).
It is then easy
to check that $\Phi$ is the desired bijection.
\end{proof}

Lascoux's rule constructs a tree ${\mathcal T}'$ as follows:
For the partition $\lambda(w)$, the {\bf parenthesis-word} is a word using ``('' and ``)'' and
obtained by walking with east and south steps along the northeast
border of $\lambda(w)$. We record a ``('' for each east step and a ``)'' for each south step.
Now pair left and right parentheses starting from the
the closest pairs ``$()$''. Each pair corresponds to a vertex of the tree, the closest pairs
are associated to leaves and a pair encloses its children.
Unpaired parentheses do not contribute to the tree. This process results in a directed forest.
Finally, we introduce an additional root and attach an edge to the root of each tree in the forest.

\begin{lemma}
There is a graph isomorphism $\delta:{\mathcal T}\to {\mathcal T}'$. Moreover
under this isomorphism if $v$ corresponds to a $1\times 1$ continent associated to a corner $c$ of $\lambda(w)$,
then $\delta(v)$ corresponds to a closest parenthesis pair associated to the same corner $c$.
\end{lemma}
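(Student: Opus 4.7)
The plan is to define $\delta$ on non-root vertices by sending each special box $b=(i,j)\in\lambda(w)$, with $|{\rm arm}(b)|=|{\rm leg}(b)|=k$, to the parenthesis pair whose opening ``('' is the east step of the boundary walk across the top of column $j$ and whose closing ``)'' is the south step across the right edge of row $i$; the root of ${\mathcal T}$ is sent to the root of ${\mathcal T}'$. This is well-defined because $|{\rm arm}(b)|=|{\rm leg}(b)|=k$ means $\lambda(w)_i=j+k$ and the topmost box of column $j$ is $(i+k,j)$, so between the east step at column $j$ and the south step at row $i$ the boundary walk performs exactly $k$ further east steps (for columns $j+1,\dots,j+k$) and $k$ further south steps (for rows $i+k,\dots,i+1$); hence the two steps form a valid pair under Lascoux's nested matching.

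To prove that $\delta$ is a bijection of vertex sets I would induct on the specialness level $z$. The base case $z=0$ is immediate: the corners of $\lambda(w)$ are precisely the boxes with $|{\rm arm}|=|{\rm leg}|=0$ and correspond bijectively to the innermost ``()'' substrings of the parenthesis word, which are exactly the pairs produced in the first round of Lascoux's construction. For the inductive step, a box $b=(i,j)$ is $z$-special exactly when, after erasing from the parenthesis word all letters used by $y$-special pairs for $y<z$, the east step at column $j$ and the south step at row $i$ form an innermost remaining pair. The three defining conditions on $z$-specialness translate as follows: $|{\rm arm}(b)|=|{\rm leg}(b)|$ corresponds to the substring between these steps being balanced (equal east and south steps), the absence of $y$-special boxes in $\{b\}\cup {\rm arm}(b)\cup {\rm leg}(b)$ for $y<z$ corresponds to none of those east/south steps having been consumed by earlier rounds (a crossing with an earlier pair would violate the non-crossing property of nested matchings), and the maximally-northeast condition corresponds to the innermost-remaining selection.

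Next I would check that $\delta$ preserves the edge structure. Edges of ${\mathcal T}'$ join a pair to its immediately enclosing pair, which under $\delta$ is the condition that $b=(i,j)$ is strictly southwest of $b'=(i',j')$ (i.e., $i<i'$ and $j<j'$) with no special box strictly between. On the ${\mathcal T}$ side, the continent of $b$ consists of those $x\in\lambda(w)$ for which $b$ is the maximally northeast special box weakly southwest of $x$, and continent-adjacency with that of $b'$ is precisely the condition that no special box lies strictly northeast of $b$ and strictly southwest of $b'$. The non-crossing property guarantees uniqueness of this minimally enclosing special box when it exists, so the parent assignment is well-defined on both sides. Root-edges match analogously: a special box has no strictly southwestern special box iff its image pair is not enclosed by any other pair.

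The ``moreover'' statement is then immediate: a $1\times 1$ continent is a corner $c=(i,\lambda(w)_i)$ of $\lambda(w)$, which is $0$-special with $|{\rm arm}|=|{\rm leg}|=0$, and $\delta$ sends it to the parenthesis pair in which the east step at the top of column $\lambda(w)_i$ is immediately followed by the south step at the right edge of row $i$, namely the closest ``()'' pair associated to $c$. I expect the main obstacle to be the inductive step establishing the bijection: one must carefully translate each of the three conditions defining $z$-specialness into the inductive removal/innermost-selection description of nested parenthesis matching, handling the boundary of $\lambda(w)$ correctly and verifying that no ``merely balanced'' box is spuriously identified as special. The remaining verifications are essentially bookkeeping once the correspondence is established.
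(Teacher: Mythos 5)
Your plan is the same as the paper's: send a special box $b=(i,j)$ to the parenthesis pair consisting of the east step above column $j$ and the south step to the right of row $i$, match the $0$-special corners with the innermost ``$()$'' pairs, and establish the vertex bijection and parent/child structure inductively on the specialness level $z$ (the paper's source in fact contains a commented-out inductive argument very close to your sketch).

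There is, however, a genuine gap in your well-definedness paragraph. You conclude that the two steps ``form a valid pair'' because the intermediate substring has exactly $k$ east steps and $k$ south steps; but nested matching requires that substring to be \emph{well-formed} (every prefix must contain at least as many ``('' as ``)''), not merely to have equal totals, and $|\mathrm{arm}(b)|=|\mathrm{leg}(b)|$ alone does not give this. For instance, in $\lambda=(5,5,1,1,1)$ the box $b=(1,1)$ has $|\mathrm{arm}(b)|=|\mathrm{leg}(b)|=4$, yet the boundary walk starts with one east step over column $1$ followed by three south steps, so the opening ``('' is matched with the very first ``)'' rather than with the step closing row $1$. Of course this $b$ is not special — its leg contains the $0$-special corner $(5,1)$ — and that is exactly the extra hypothesis your paragraph needs but never invokes. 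You do flag the point at the end (``verifying that no `merely balanced' box is spuriously identified as special''), so you have correctly located the crux; but as written the paragraph asserts well-definedness from $|\mathrm{arm}|=|\mathrm{leg}|$ alone, which is false. The fix is to derive well-definedness as a byproduct of the inductive claim (that $b$ is $z$-special iff, after erasing the letters used by $y$-special pairs for $y<z$, its ``('' and ``)'' become an innermost remaining pair), which you identify as the real work; once that is proved carefully — in particular distinguishing ``balanced'' from ``equal counts'' there as well — the rest of your argument goes through and matches the paper's.
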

\begin{proof}
Each leaf of ${\mathcal T}$ corresponds to a corner $c$ of $\lambda(w)$. On the other hand, this corner
gives rise to a closest pair ``$(\ )$'' in Lascoux's construction, which corresponds to a leaf of ${\mathcal T}'$.
Thus we can construct a bijection between the leaves of the two trees, which we now argue extends
to the bijection $\delta$ between the two trees themselves.

A continent $C$ is a {\bf $z$-continent} if it is defined by a $z$-special box $b$. Fix a vertex $v\in {\mathcal T}$ associated
to such a continent. By construction, each child of $v$ is a vertex $\{v'\}$ associated to a $y$-continent $C'$ adjacent
and northeast of $C$ in ${\rm Pangaea}(v,w)$, where $y<z$. Since $b\in C$ is a special box, by
using the fact that $|{\rm arm}(b)|=|{\rm leg}(b)|$ we have that the column $b$ is in corresponds to a $($ and the
row $b$ in in corresponds to a $)$ where these two parentheses are paired with one another in the parenthesis word. Clearly, this pair
gives a vertex $v'\in {\mathcal T'}$, and all vertices of ${\mathcal T'}$ arise this way. That is, there is a bijection at the level of
vertices $\delta:{\mathcal T}\to {\mathcal T}'$. Moreover, that the children of $\delta(v)$ are exactly $\{\delta(v')\}$ (for children
$v'$ of $v$) is also immediate from the constructions of ${\mathcal T}$ and ${\mathcal T}'$
\excise{We will build up the isomorphism by induction on $z\geq 0$. Let a $z$-continent refer to a continent defined  by a $z$-special
box.
In the base case $z=0$ continents are $1\times 1$ northeast corners of $\lambda(w)$. These correspond to leaves of
${\mathcal T}$. On the other hand, these corners are associated to closest parentheses, which give rise to leaves of ${\mathcal T}'$.
Hence the second sentence of the lemma's statement will hold for our isomorphism.

Now suppose $z>0$, and consider a vertex $v\in {\mathcal T}$ associated to a $z$-special box $b$

consider a vertex $v'$ of ${\mathcal T}'$. In the parenthesis word this corresponds to
\begin{equation}
\label{eqn:balanced}
\cdots (A_1\ A_2 \ A_3 \cdots \ A_t )\cdots
\end{equation}
where the ``$($'' and ``$)$'' correspond to $v'$ and the $A_i$'s are balanced parenthesis words and correspond to children of $v'$.
Moreover, we inductively assume
the outermost pair of parentheses of each $A_i$ biject with vertices of ${\mathcal T}$ associated to $(z-1)$-continents. Now look at the
box $b$ that is in the column associated to the ``$($'' and in the row associated to the
``$)$''. The balanced condition implies $|{\rm arm}(b)|=|{\rm leg}(b)|$. Finally, by the form of (\ref{eqn:balanced})
no special boxes can appear among $\{b\}\cup {\rm arm}(b)\cup {\rm leg}(b)$. Thus it follows $b$ is $z$-special and corresponds to
a vertex $v\in {\mathcal T}$ whose children biject with the children of $v'\in {\mathcal T}'$.}
\end{proof}

Lascoux's rule similarly defines increasing edge labelings $EL({\mathcal T})$ on ${\mathcal T}$ as we did for $DL({\mathcal T})$.
It remains to check that these labelings are the same as the ones in $DL({\mathcal T})$.
For this, we only need to show that the bound attached to the leaves are the same.
In \cite[6.3.29, Step 2]{Billey.Lakshmibai}, for each given leaf, a
bigrassmannian permutation is determined in three sub-steps, from which Lascoux's leaf bounds are determined.
We now explain these steps. (For readers comparing what follows with \cite{Billey.Lakshmibai}, note
their $x$ is our ${\widetilde w}=w^{-1}w_0$ while their $w$ is our ${\widetilde v}=v^{-1}w_0$.)

The reader may find the following diagram useful for the description of Lascoux's labeling process:

\begin{figure}[h]\setlength{\unitlength}{.8pt}
\begin{picture}(200,150)
\thicklines
\put(0,0){\line(1,0){140}} \put(140,0){\line(0,1){60}}
\put(140,60){\line(-1,0){40}} \put(100,60){\line(0,1){20}}
\put(100,80){\line(-1,0){100}} \put(0,0){\line(0,1){80}}
\put(40,20){\line(1,0){60}}
\put(0,40){\line(1,0){60}}
\put(40,20){\line(0,1){20}}
\put(60,20){\line(0,1){20}}
\put(100,0){\line(0,1){20}}
\put(160,140){\line(0,1){20}}
\put(160,140){\line(1,0){20}}
\put(180,140){\line(0,1){20}}
\put(160,160){\line(1,0){20}}

\put(80,60){\line(1,0){20}}
\put(80,60){\line(0,1){20}}
\put(85,65){${\mathfrak e}'$}
\thinlines
\put(12,5){$\lambda{(w)}$}
\put(20,85){$B(v,w)$}
\put(165,145){${\mathfrak e}=(j,{\widetilde w}_k)$}
\put(42,24){${\mathfrak e}''=\!(h,\lambda(w)_h)$}

\put(-5,0){\line(-1,0){100}}
\put(-25,15){$h$}
\put(-10,20){\vector(0,1){20}}\put(-10,20){\vector(0,-1){20}}
\put(-65,40){\line(1,0){60}}
\put(-55,55){$b_h-h$}
\put(-10,60){\vector(0,1){20}}\put(-10,60){\vector(0,-1){20}}
\put(-5,80){\line(-1,0){40}}
\put(-35,115){$r^v_{{\mathfrak e}}$}
\put(-10,120){\vector(0,1){40}}\put(-10,120){\vector(0,-1){40}}
\put(-100,160){\line(1,0){95}}
\put(-80,100){$r^w_{{\mathfrak e}}$}
\put(-60,100){\vector(0,1){60}}\put(-60,100){\vector(0,-1){60}}
\put(-100,65){$j$}
\put(-85,100){\vector(0,1){60}}\put(-85,100){\vector(0,-1){100}}
\end{picture}
\caption{Diagram for the proof of $P_{v,w}(q)=Q_{v,w}(q)$}
\end{figure}

Sub-step (1) [leaves $p$ of ${\mathcal T}$ correspond to distinct numbers in
the code of  $\widetilde{w}$]: The code $(c_1,\dots,c_n)$ of $\widetilde{w}$ is given by
\[c_i=\#\{j>i \ | \ {\widetilde w}_j<{\widetilde w}_i\}=\#\{\mbox{boxes of $D(w)$ in row $i$}\}.\]
Recall $\lambda(w)$ is the result of sorting this code into decreasing order.
A leaf $p$ of ${\mathcal T}$ corresponds to a corner ${\mathfrak e}''=(h,\lambda(w)_h)$ of $\lambda(w)$.
Associate $\lambda(w)_h$ to $p$. This $\lambda(w)_h$ is equal to $c_i$ for some $i$. Clearly a different $c_i$ is assigned
to each~$p$.

Sub-step (2) [$\lambda(w)_h$ gives a crossing of $\widetilde{w}$]: By definition, a {\bf crossing} of $\widetilde{w}$ is a
4-tuple $(i,j,j+1,k)$ satisfying
\begin{equation}
\label{eqn:crossing}
\widetilde{w}_{j+1}\le \widetilde{w}_k<\widetilde{w}_i\le \widetilde{w}_j,\quad \widetilde{w}_i=\widetilde{w}_k+1
\mbox{\quad for $i\le j<k$;}
\end{equation}
cf. \cite{treillis}.
Now given the ${\mathfrak e}''$ associated to $p$, there is
a unique essential box ${\mathfrak e}$ in $D(w)$
that is diagonally northeast of ${\mathfrak e}''$. We define $j$ and $k$ by declaring that the
coordinates of ${\mathfrak e}$ are $(j, {\widetilde w}_k)$. Let $i$ be such that
$\widetilde{w}_i=\widetilde{w}_k+1$.

We claim that $(i,j,j+1,k)$ forms a crossing. Let us first check the weak inequalities of
$\widetilde{w}_{j+1}\le \widetilde{w}_k<\widetilde{w}_i\le \widetilde{w}_j$ (the strict inequality being
true by definition). For the rightmost inequality, we have
${\widetilde w}_j=w^{-1}w_0(j)=w^{-1}_{n-j+1}$, which in words is the column position of the $\bullet$ of $G(w)$
that necessarily must be to the right of ${\mathfrak e}$, which itself is in column ${\widetilde w}_k$. In other
words ${\widetilde w}_k\leq {\widetilde w}_j$. Now, for the leftmost inequality, note ${\widetilde w}_{j+1}=w^{-1}w_0(j+1)=w^{-1}(n-j)$
which is the column position of the $\bullet$ of $G(w)$ in row $j+1$. Since ${\mathfrak e}$ is an essential box, that $\bullet$
must be weakly to the left, i.e., ${\widetilde w}_{j+1}\leq {\widetilde w}_{k}$, as desired. It remains to check
$i\leq j$ and $j<k$. For the former inequality, we compute $w{\widetilde w}_i=n-i+1$ which is the row position of the $\bullet$
of $G(w)$ in column ${\widetilde w}_i$. Since ${\mathfrak e}$ is an essential box, the $\bullet$ is weakly below the ${\mathfrak e}$, i.e., $i\leq j$. Similarly, for the latter inequality, we consider $w{\widetilde w}_k=n-k+1$, which is the position of the $\bullet$
of $G(w)$ in column ${\widetilde w}_k$. This must be strictly above the ${\mathfrak e}$, i.e., $j<k$.

Now associate
the crossing $(i,j,j+1,k)$ to $p$ (and hence $\lambda(w)_h$).
Actually, the description in \cite{Billey.Lakshmibai} gives a different way to assign a crossing
to $p$. However, it is straightforward to check that their crossing is same as the one described above.


Sub-step (3) [each crossing gives a maximal bigrassmannian
$[a,b,c,d]$ below $\widetilde{w}$]:
Here $[a,b,c,d]$ denotes
\[(1,\dots, a, a+c+1,\dots, a+c+b, a+1,\dots, a+c, a+c+b+1,\dots, a+b+c+d)\in S_n.\]

Lascoux's rule corresponds $(i,j,j+1,k)$ to a maximal bigrassmannian
\[[z,j-z, {\widetilde w}_k-z, n-{\widetilde w}_k-j+z],\]
where
\[z=\#\{p<j: {\widetilde w}_p<{\widetilde w}_k\}.\]
Notice $z$ is the number
of $\bullet$'s in $G(w)$ weakly southwest of ${\mathfrak e}=(j,{\widetilde w}_k)$, i.e.
\begin{equation}
\label{eqn:simplefact}
z=r^w_{\mathfrak{e}}.
\end{equation}

This concludes Sub-step (3) of step 2 of \cite{Billey.Lakshmibai}.

Lascoux's rule then assigns to $p$ the following leaf bound:
\[{\rm distance}([z, j-z, {\widetilde w}_k-z, n-{\widetilde w}_k-j+z],
\widetilde{v}),\]
where
\[{\rm distance}([a,b,c,d],\widetilde{v})=\max\{r\ge 0| [a-r,b+r,c+r,d-r]\leq \widetilde{v}\, \},\]
and where ``$\leq$'' refers to Bruhat order on $S_n$.

This completes the description of Lascoux's algorithm.

Recall $r^v_{(a+b, a+c)}$ equals the number of dots of $G(v)$ weakly southwest of
$(a+b,a+c)$. Observe the following fact, whose proof is straightforward to argue (and also follows from the deeper
developments in \cite{treillis}):
\begin{lemma}\label{lemma:compare bigrass}
 For any bigrassmannian permutation $[a,b,c,d]$ and permutation $\widetilde{v}$ in
 $S_n$,
 the inequality $[a,b,c,d]\leq \widetilde{v}$ is equivalent to
$r^v_{(a+b, a+c)}\le a$, where $v=w_0\widetilde{v}^{-1}$.\qed
\end{lemma}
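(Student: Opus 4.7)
The plan is to combine two ingredients: (i) the classical fact that a Bruhat inequality $\beta\le\pi$ for a bigrassmannian permutation $\beta=[a,b,c,d]$ is governed by a single rank inequality at its unique essential box, and (ii) a direct change-of-variables relating the rank function of $\widetilde v$ to the southwest dot count $r^v$ via $v=w_0\widetilde v^{-1}$.

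For (i), recall that in the standard (rows from the top) conventions, $\sigma\le\pi$ in Bruhat order iff $r^{\mathrm{std}}_\sigma(i,j)\ge r^{\mathrm{std}}_\pi(i,j)$ for all $(i,j)$, where $r^{\mathrm{std}}_\pi(i,j)=\#\{p\le i:\pi(p)\le j\}$. For $\beta=[a,b,c,d]$, a direct inspection of the one-line notation shows that the Rothe diagram of $\beta$ is the $b\times c$ rectangle with southeast corner $(a+b,a+c)$, so this is the unique essential box, and that $r^{\mathrm{std}}_\beta(a+b,a+c)=a$. Because bigrassmannian permutations are the join-irreducibles of Bruhat order (cf.\ \cite{treillis}), the full family of rank inequalities collapses to the single condition
\[
\beta\le\pi\ \Longleftrightarrow\ r^{\mathrm{std}}_\pi(a+b,a+c)\le a.
\]

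For (ii), specialize to $\pi=\widetilde v$. From $v=w_0\widetilde v^{-1}$ one has $\widetilde v(k)=v^{-1}(n-k+1)$, so
\[
r^{\mathrm{std}}_{\widetilde v}(a+b,a+c)=\#\{k\le a+b:\ v^{-1}(n-k+1)\le a+c\}.
\]
Setting $m=n-k+1$ reindexes this as $\#\{p\le a+c:\ v(p)\ge n-(a+b)+1\}$, which is by definition the number of dots of $G(v)$ weakly southwest of $(a+b,a+c)$, namely $r^v_{(a+b,a+c)}$. Combining (i) and (ii) yields the asserted equivalence
$[a,b,c,d]\le\widetilde v \iff r^v_{(a+b,a+c)}\le a$.

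The hard part is not any genuine combinatorial difficulty but rather bookkeeping: reconciling the paper's bottom-indexed-row, southwest-dot-count convention with the top-indexed, northwest-count convention in which the standard essential-set characterization is usually stated, and keeping track of the $v\leftrightarrow\widetilde v$ passage. Once these are aligned the argument is the one-line computation above, which is why the authors can call it ``straightforward''; the deeper treatment in \cite{treillis} is cited as an alternative source for the bigrassmannian fact in~(i).
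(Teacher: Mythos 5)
Your argument is correct and takes the approach the authors left implicit: the paper gives no proof, remarking only that the verification is ``straightforward'' and also follows from the Lascoux--Sch\"{u}tzenberger theory of join-irreducibles (bigrassmannians) in Bruhat order, which is exactly ingredient (i) in your write-up. Your change of variables is also correct: substituting $p = v^{-1}(n-k+1)$ converts the standard northwest rank $r^{\mathrm{std}}_{\widetilde v}(a+b,a+c)=\#\{k\le a+b:\widetilde v(k)\le a+c\}$ into $\#\{p\le a+c: v(p)\ge n-(a+b)+1\}$, which is precisely the paper's southwest dot count $r^v_{(a+b,a+c)}$ under its bottom-to-top row convention with $G(v)$ dots at $(n-v(j)+1,j)$, and the identity $v=w_0\widetilde v^{-1}$ is consistent with the paper's convention $\widetilde v = v^{-1}w_0$.
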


\begin{proposition}
\label{prop:finalcalc}
The leaf bounds on $DL({\mathcal T})$ and $EL({\mathcal T})$ are the same.
\end{proposition}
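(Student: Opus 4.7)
The plan is to reduce both leaf bounds to the same closed form $r^w_{\mathfrak{e}} - r^v_{\mathfrak{e}}$ and deduce their equality. First, Lascoux's bound simplifies via Lemma~\ref{lemma:compare bigrass}: in the one-parameter family of bigrassmannians $[z-r,\,j-z+r,\,\widetilde{w}_k-z+r,\,n-\widetilde{w}_k-j+z-r]$, both sums $a+b=j$ and $a+c=\widetilde{w}_k$ are invariant as $r$ varies, so by the lemma this bigrassmannian lies below $\widetilde{v}$ in Bruhat order iff $r^v_{(j,\widetilde{w}_k)} = r^v_{\mathfrak{e}} \le z-r$. Combined with (\ref{eqn:simplefact}), this gives $\mathrm{distance} = z - r^v_{\mathfrak{e}} = r^w_{\mathfrak{e}} - r^v_{\mathfrak{e}}$.

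Next, we compute $b_h-h$ by placing $\mathfrak{e}''=(h,\lambda(w)_h)$, the shifted box $\mathfrak{e}'$, and the essential box $\mathfrak{e}$ on a common antidiagonal. Two counting identities do the work:
\[\widetilde{w}_k - \lambda(w)_h = r^w_{\mathfrak{e}}, \qquad j - h = r^w_{\mathfrak{e}}.\]
For the first, the columns $j' < \widetilde{w}_k$ absent from row $j$ of $D(w)$ correspond bijectively to dots of $G(w)$ southwest of $\mathfrak{e}$ (a column $j'$ is absent precisely when $j'=\widetilde{w}_i$ for some $i<j$ with $\widetilde{w}_i<\widetilde{w}_k$), so $c_j = \widetilde{w}_k - r^w_{\mathfrak{e}}$ and this equals $\lambda(w)_h$. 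For the second, $h$ is the number of rows of $D(w)$ with code $\ge c_j$; by the nested-row structure of covexillary diagrams, these are precisely the rows $i\le j$ such that $(i,\widetilde{w}_k) \in D(w)$, whose count is $j - r^w_{\mathfrak{e}}$.

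Combining the identities, $\mathfrak{e}' = (j - r^v_{\mathfrak{e}},\,\widetilde{w}_k - r^v_{\mathfrak{e}})$ sits exactly $r^w_{\mathfrak{e}} - r^v_{\mathfrak{e}}$ diagonal steps northeast of $\mathfrak{e}''$, so $\mathfrak{e}' \in B(v,w)$ yields $b_h \ge j - r^v_{\mathfrak{e}}$. The reverse inequality requires $(j-r^v_{\mathfrak{e}}+1,\,\widetilde{w}_k-r^v_{\mathfrak{e}}+1) \notin B(v,w)$; this point would lie in $B(v,w)$ only if some other $\mathfrak{e}_\beta'$ were strictly northeast of $\mathfrak{e}'$, but this cannot happen because $\Ess(\Theta_{v,w}) = \{\mathfrak{e}_\alpha'\}$ is an antichain ($\Theta_{v,w}$ being itself covexillary). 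Hence $b_h = j - r^v_{\mathfrak{e}}$, and therefore $b_h - h = r^w_{\mathfrak{e}} - r^v_{\mathfrak{e}}$ matches Lascoux's bound.

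The main obstacle is the identity $j-h = r^w_{\mathfrak{e}}$ in the presence of repeated code values: several rows of $D(w)$ can share the same code (as happens already for $w = 2143$), and the bijection between essential boxes and corners of $\lambda(w)$ must pick out a specific row per code-block. Showing rigorously, from the covexillary structure, that essential boxes are precisely the topmost rows of their code-blocks (so that $\#\{\text{rows with code} \ge c_j\}$ coincides with $\#\{i \le j : (i,\widetilde{w}_k) \in D(w)\}$) is where the careful bookkeeping lies.
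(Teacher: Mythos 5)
Your argument follows the same route as the paper: Lascoux's bound is reduced via Lemma~\ref{lemma:compare bigrass} and \eqref{eqn:simplefact} to $r=r^w_{\mathfrak e}-r^v_{\mathfrak e}$, and then $b_h-h$ is computed from the geometry of $B(v,w)$ to give the same quantity. The place where you diverge is in the justification of the auxiliary identities $j=h+r^w_{\mathfrak e}$ and $b_h=j-r^v_{\mathfrak e}$: the paper simply invokes these from~\cite[Lemma~5.7]{Li.Yong} together with the antichain structure of $\Ess(\Theta_{v,w})$, whereas you attempt to re-derive them directly via counting absent columns and rows of $D(w)$. That is a legitimate alternative in spirit, but you have not actually closed the re-derivation: you explicitly flag the verification of $j-h=r^w_{\mathfrak e}$ in the presence of repeated code values (i.e.\ showing that the corner $(h,\lambda(w)_h)$ of $\lambda(w)$ and the essential box in row $j$ are matched so that $h=\#\{i:c_i\ge c_j\}$ coincides with $j-r^w_{\mathfrak e}$) as undone ``careful bookkeeping.'' That is exactly the combinatorial content the paper outsources to~\cite{Li.Yong}, so as written your proof has a genuine gap there; you should either carry out that verification using the covexillary structure of $D(w)$ (nested rows / antichain of essential boxes), or cite~\cite[Lemma~5.7]{Li.Yong} as the paper does. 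The remainder of your argument (invariance of $a+b$ and $a+c$ under $r$, the antichain argument for the reverse inequality $b_h\le j-r^v_{\mathfrak e}$) is correct and matches the paper's reasoning.
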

\begin{proof}
By Lemma \ref{lemma:compare bigrass},
\begin{equation}\label{eq:distance}
\aligned & [z-r,j-z+r, {\widetilde w}_k-z+r, n-{\widetilde w}_k-j+z-r]\leq \widetilde{v}\\
\iff & r^v_{(z-r)+(j-z+r),
(z-r)+({\widetilde w}_k-z+r)}\leq z-r\\
 \iff & r^v_{(j,{\widetilde w}_k)}\leq z-r\\
\iff & r^v_{{\mathfrak e}}\leq z-r.\endaligned
\end{equation}

Hence, the maximal $r$ such that any of the inequalities
(\ref{eq:distance}) hold is
\[r=z-r^v_{{\mathfrak e}}=r^w_{{\mathfrak e}}-r^v_{{\mathfrak e}},\]
where we have used (\ref{eqn:simplefact}).

In terms of drift configurations, $r$ is the largest distance that a corner
${\mathfrak e}''=(h,\lambda(w)_h)$ can be moved diagonally northeast and remain in $B(v,w)$
(cf. \cite[Lemma 5.7]{Li.Yong}).
By the definition of $B(v,w)$, $b_h=j-r^v_{\mathfrak{e}}$. It is also easy to
check that $j=h+r^w_{\mathfrak{e}}$  (again by \cite[Lemma 5.7]{Li.Yong}). Then
\[b_h-h=j-r^v_{\mathfrak{e}}-h=(j-h)-r^v_{\mathfrak{e}}=r^w_{\mathfrak{e}}-r^v_{\mathfrak{e}}=r.\]
This completes the proof of the proposition.
\end{proof}

By Lascoux's rule,
\[P_{w_0 {\widetilde v}, w_0 {\widetilde w}}(q)(=P_{w_0v^{-1}w_0, w_0w^{-1}w_0}(q)=P_{v,w}(q))=\sum q^{|T|}\]
where the sum is over $EL(T)$ and $|T|$ is the total sum of the edge labels.
Since we have established the desired weight-preserving bijection, the claim $Q_{v,q}(q)=P_{v,w}(q)$ then follows.

\begin{remark}
\label{remark:twosymmetries}
There are two basic symmetries of the Kazhdan-Lusztig polynomials: (1) $P_{v,w}(q)=P_{w_0 v^{-1} w_0,w_0 w^{-1} w_0}(q)$ and
(2) $P_{v,w}(q)=P_{v^{-1},w^{-1}}(q)$. The symmetry (1) is manifest in our rule and ${\rm drift}(w_0 v w_0, w_0 w w_0)$
is obtained by transposing the drift configurations of ${\rm drift}(v,w)$. For (2), it is an exercise to prove
that $\lambda(w)=\lambda(w^{-1})$ and $B(v,w)=B(v^{-1},w^{-1})$ and so ${\rm drift}(v^{-1},w^{-1})={\rm drift}(v,w)$.
\end{remark}

\begin{remark}
\label{remark:anotherbound}
From Theorem~\ref{thm:main}(I) it is not hard to show the following. For $w, v\in S_n$ where $w$ is covexillary  and $v\le
w$, let
$k$ be the number of special boxes of $\lambda(w)$ and let $m=\lfloor\frac{n-k+1}{2}\rfloor$. If $[m]_q=1+q+\cdots +q^{m-1}$, then
$[q^{i}]P_{v,w}(q)\le [q^{i}]([m]_q)^k$ for all $i$. In particular, $P_{v,w}(1)\le m^k$.
\end{remark}

\section{Another $q$-analogue of multiplicity}

We can think of $H_{v,w}(q)$ as a $q$-analogue of Hilbert-Samuel multiplicity, in the sense that $H_{v,w}(1)={\rm mult}_{e_v}(X_w)$.
Let us point out that in the covexillary setting, there is another $q$-analogue available. As in Theorem~\ref{thm:main}(II),
regard each box of $\lambda(w)$ as a separate country; the
``drift configurations'' are precisely the \emph{pipe dreams} $P\in {\rm Pipes}(v,w)$ in \cite{Li.Yong}.
Now let \[{\widetilde {\rm wt}}(P)=q^d\] where $d$ is the total of the distance drifted by the countries, and set
\[{\widetilde H}_{v,w}(q)=\sum_{\rm P\in {\rm Pipes}(v,w)} {\widetilde {\rm wt}}(P).\]
In the following theorem we use the standard $q$-notation:
\[[a]_q=1+q+\cdots+q^{a-1}
\mbox{\ and ${a\choose b}_q=\frac{[a]_q[a-1]_q\cdots[a-b+1]_q}{[b]_q\cdots [1]_q}$.}\]

\begin{theorem}
\[{\widetilde H}_{v,w}(q)=q^{-\sum_{i\geq 1}(i-1)\lambda_i}\det\left({b_i+\lambda_i-i+j-1\choose \lambda_i-i+j}_q\right)_{1\leq i,j\leq
\ell(\lambda)},\]
where $\ell(\lambda)$ is the number of nonzero parts of $\lambda$ and ${\bf b}={\bf b}(\Theta_{v,w})$.
\end{theorem}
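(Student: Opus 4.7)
The plan is to reinterpret $\widetilde{H}_{v,w}(q)$ as a principal specialization of a flagged Schur polynomial, and then recognize the right-hand side as the corresponding flagged Jacobi--Trudi determinant.

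First, I will pass from pipe dreams to semistandard Young tableaux. Every single-box drift configuration $P\in {\rm Pipes}(v,w)$ is encoded by its drift vector $(d_{ij})_{(i,j)\in\lambda(w)}$, and sending $P$ to the tableau $T$ with $T(i,j):=i+d_{ij}$ should give a bijection
\[{\rm Pipes}(v,w)\;\longleftrightarrow\;{\rm SSYT}(\lambda(w),\mathbf{b}),\qquad \mathbf{b}=\mathbf{b}(\Theta_{v,w}).\]
Indeed, the non-overlap/relative-position axioms of Definition~\ref{def:drift} become row-weak and column-strict increase; the constraint that each drifted cell lies inside $B(v,w)$ reduces, by row-weak increase and the Young-diagram shape of $B(v,w)$, to $T(i,\lambda_i)\leq b_i$, which is exactly the flag condition via the definition of $b_i$. (This is essentially a restatement of the multiplicity rule from \cite{Li.Yong} recalled in Theorem~\ref{thm:main}(II).) Under this bijection
\[\widetilde{\rm wt}(P)=\sum_{(i,j)\in\lambda(w)}d_{ij}=|T|-\sum_{i\geq 1}i\,\lambda_i,\]
so
\[\widetilde H_{v,w}(q)=q^{-\sum_i i\lambda_i}\sum_{T\in{\rm SSYT}(\lambda(w),\mathbf{b})}q^{|T|}.\]

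Second, I will invoke the flagged Jacobi--Trudi identity (Wachs):
\[s_{\lambda}(x_1,x_2,\ldots;\mathbf{b})=\det\bigl(h_{\lambda_i-i+j}(x_1,\ldots,x_{b_i})\bigr)_{1\leq i,j\leq\ell(\lambda)}.\]
Under the principal specialization $x_k=q^{k-1}$, the tableau weight becomes $\prod_{(i,j)}q^{T(i,j)-1}=q^{|T|-|\lambda|}$, and the complete homogeneous functions become $q$-binomials via the identity $h_n(q^0,q^1,\ldots,q^{b-1})=\binom{n+b-1}{n}_q$. Thus
\[\sum_{T\in{\rm SSYT}(\lambda(w),\mathbf{b})}q^{|T|-|\lambda(w)|}=\det\!\left(\binom{b_i+\lambda_i-i+j-1}{\lambda_i-i+j}_q\right)_{1\leq i,j\leq\ell(\lambda)}.\]
Combining the two formulas and using the identity $\sum_i i\lambda_i-|\lambda(w)|=\sum_i(i-1)\lambda_i$ yields the claim.

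The bookkeeping of the weight is routine; the genuine content lies in the first step, namely verifying that the drift constraints in $B(v,w)$ really do collapse to the single flag inequality $T(i,\lambda_i)\leq b_i$ in each row. The cleanest way I expect to execute this is to reuse the bijective translation between pipe dreams and flagged tableaux already used to prove the multiplicity count in \cite{Li.Yong}, and simply observe that the drift statistic $\sum d_{ij}$ matches $|T|-\sum i\lambda_i$ cell-by-cell. Once that is in hand, the determinantal identity is the standard $q$-analogue of LGV/Jacobi--Trudi, so no new combinatorics is needed.
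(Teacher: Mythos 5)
Your proposal is correct and follows essentially the same route as the paper: both pass through the pipe-dream/flagged-SSYT bijection from \cite{Li.Yong}, identify $\widetilde{H}_{v,w}(q)$ with a normalization of the principal specialization $s_{\lambda(w),\mathbf{b}}(1,q,q^2,\ldots)$, and then read off the determinant via the flagged Jacobi--Trudi identity and the $q$-binomial evaluation of $h_n$. The only difference is cosmetic: the paper establishes the weight identity $\mathrm{wt}_q(P)=\widetilde{\mathrm{wt}}(P)$ by induction on the number of drift moves, whereas you obtain it directly from the cellwise relation $T(i,j)=i+d_{ij}$, a modest streamlining rather than a different argument.
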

\begin{proof}
For brevity, we refer the reader to the setup of \cite[Sections~5.2 and~6.2]{Li.Yong}.
Notice that
\[s_{\lambda,{\bf b}}(1,q,q^2,q^3,\ldots)=\det\left({b_i+\lambda_i-i+j-1\choose \lambda_i-i+j}_q\right)_{1\leq i,j\leq \ell(\lambda)}\]
where the lefthand side of the equality is the principal specialization
of the (single) flagged Schur
polynomial for shape $\lambda(w)$ with flag ${\bf b}={\bf b}(\Theta_{v,w})$.

Given a pipe dream $P\in {\rm Pipes}(v,w)$ that corresponds to a flagged semistandard Young
tableau $T$, write
\[{\rm wt}_x(P):={\rm wt}_x(T)\]
to mean the usual multivariate weight assigned
to $T$ (i.e., so that $s_{\lambda,{\bf b}}(x_1,x_2,x_3,\ldots)=\sum_{T} {\rm wt}_x(T)$).
Let ${\rm wt}_q'(P)$ be the principal specialization of ${\rm wt}_x(P)$ given by
$x_i\mapsto q^{i-1}$
and finally set
\[{\rm wt}_q(P)=q^{-\sum_{i\geq 1} (i-1)\lambda_i}\times {\rm wt}_{q}'(P).\]

It remains to show that for each $P$, ${\rm wt}_q(P)={\widetilde {\rm wt}}(P)$. To do this,
let us induct on ${\widetilde {\rm wt}}(P)\geq 0$. The base case that ${\widetilde {\rm wt}}(P)=0$, i.e.,
where $P$ is the starting configuration holds since ${\rm wt}'_q(P)=q^{\sum_{i\geq 1} (i-1)\lambda_i}$.

Now suppose
${\widetilde {\rm wt}}(P)>0$. Then there is a $P'$ such that a move of the form
\[\begin{matrix}
\cdot & \cdot\\
+ & \cdot
\end{matrix}\ \ \mapsto \ \
\begin{matrix}
\cdot & +\\
\cdot & \cdot
\end{matrix}\]
in some $2\times 2$ subsquare of $[n]\times [n]$
brought us to $P$ (and no other $+$ in $P'$ has changed). Thus, we can compare ${\rm wt}_x(P')$
and ${\rm wt}_x(P)$: the latter only differs from the former in that some factor of $x_i$ changed to
$x_{i+1}$ (where $i$ and $i+1$ are the rows changed by the move above). Hence
applying induction we have
\[{\rm wt}_q(P)={\rm wt}_q(P')\times q={\widetilde {\rm wt}}(P')\times q={\widetilde {\rm wt}}(P),\]
as desired.
\end{proof}

It is clear from Theorem~\ref{thm:main}
that
\[P_{v,w}(q)\preceq {\widetilde H}_{v,w}(q).\]
With the same proof that we used for $H_{v,w}(q)$,
one shows that ${\widetilde H}_{v,w}(q)$ is upper semicontinuous.
However, in general ${\widetilde H}_{v,w}(q)\neq H_{v,w}(q)$. Moreover,
we do not know any algebraic/geometric measure for general Schubert varieties
that specializes to ${\widetilde H}_{v,w}(q)$.

\section{Concluding remarks}

We are presently unaware of any geometric proof of the inequality of Theorem~\ref{thm:mainineq}.
For general $Y$, let us assume, for simplicity of our discussion, that all odd local intersection cohomology groups vanish, and set
\[P_{p,Y}(q)=\sum_{i\geq 0}\dim({\mathcal H}_p^{2i}(Y))q^i.\]

\begin{question} Under what assumptions is either the inequality
$P_{p,Y}(q)\preceq H_{p,Y}(q)$ and/or the weaker inequality $P_{p,Y}(1)\leq H_{p,Y}(1)(={\rm mult}_{p}(Y))$ true?
\end{question}

\excise{The case $Y$ is toric variety appears to us to be related but not subsumed by
work on $h$-vectors of rational convex polytopes being \emph{equal} to \emph{global} intersection cohomology
betti numbers of their associated toric variety \cite{Stanley:hvec}.}

\excise{A simpler case is when
$Y$ is a curve, as all $Y$ admit small resolutions. Now
consider the following example: suppose
$Y$ has a node $p$ and its preimage contains $2$ points, then
$H_{p,Y}(q)=1+q$ but $P_{p,Y}(q)=2$.
Hence we should require the variety $Y$ to be at least normal to hope that
the inequality $P_{p,Y}(q)\le H_{p,Y}(q)$ holds (Schubert varieties are normal).
On the other hand, even without this restriction, $P_{p,Y}(1)\le H_{p,Y}(1)$ does
hold: Embed $Y$ into a projective space and consider a general hyperplane $H$ that is
sufficiently close
to the point $p$. Then number $m$ of intersection points
of $Y\cap H=\{y_1,\dots,y_m\}$ is the multiplicity $H_{p,Y}(1)$. Let the pull back of $\{y_1,\dots,y_m\}$ to $Z$ be $z_1,\dots,z_m$.
Denote the inverse points of $p$ by $x_1,\cdots,x_\ell$. Then each $x_i$ is close to one point
$y_{f(i)}$, and if for $i\neq i'$, $y_{f(i)}$ and $y_{f(i')}$ are different.
So we have an injection $i\mapsto f(i)$, hence $\ell\le m$,
i.e. $P_{p,Y}(1)\le H_{p,Y}(1)$.}

Our results on $H_{v,w}(q)$ are based on the degeneration, flat over ${\rm Spec}({\mathbb Z})$, given in \cite{Li.Yong}. Hence
Theorem~\ref{thm:Kvwrule} is valid over a field ${\Bbbk}$ of arbitrary characteristic and Conjecture~\ref{conj:mainintro} seems similarly
valid. However, the arguments of \cite{Li.Yong} also prove that
the projectivized tangent cones
of the Kazhdan-Lusztig varieties ${\mathcal N}_{v,w}$ are isomorphic
to those for ${\mathcal N}_{id,\Theta_{v,w}}$. It is then not hard to construct
some cograssmannian $v',w'$ with the same property.
We do not know if ${\mathcal N}_{v,w}$ and any such ${\mathcal N}_{v',w'}$ are actually isomorphic, although a number of useful
implications would be a consequence of this fact.

A number of formulae have been obtained for $P_{v,w}(q)$. For example, general, non-positive
formulae have been obtained by \cite{Billera} and \cite{Brenti}. Beyond the covexillary case, few positive formulae are known, see, e.g.,
\cite{Billey.Warrington} (which treats the $321$-hexagon avoiding case) and the references therein.
It would be interesting to try to extend our main theorems to these other contexts as well.

Finally, we believe many of the ideas of this paper can be extended to other Lie groups. In particular, we expect Theorems~\ref{thm:mainineq},
\ref{thm:main} and~\ref{thm:Kvwrule} to have analogues for (co)minuscule $G/P$, cf.~\cite{Boe}.
However, this requires sufficient technicalities that it is better left to a separate treatment.

\section*{Acknowledgements}
We thank Sara Billey, Xuhua He, Hiroshi Naruse and Alexander Woo for useful suggestions and questions that inspired this work. We also
thank Jonah Blasiak, Allen Knutson, Venkatramani Lakshmibai, Ezra Miller, Greg Warrington and the anonymous referee for helpful comments.
AY is partially supported by NSF grants DMS-0601010 and DMS-0901331.


\begin{thebibliography}{9999999999999}
\bibitem[BilBre07]{Billera} L.~Billera and F.~Brenti, \emph{Quasisymmetric functions and Kazhdan-Lusztig polynomials}, preprint 2007.
\textsf{arxiv:0710.3965}
\bibitem[BilLak01]{Billey.Lakshmibai} S.~Billey and V.~Lakshmibai, \emph{Singular loci of Schubert varieties}, Progr.~Math. {\bf
    182}(2000),
Birkh\"{a}user, Boston.
\bibitem[BilWar01]{Billey.Warrington} S.~Billey and G.~Warrington, \emph{Kazhdan-Lusztig polynomials for $321$-hexagon-avoiding
    permutations}, J.~Alg.~Comb., {\bf 13}(2) (2001), 111--136.
\bibitem[Boe88]{Boe} B.~D.~Boe, \emph{Kazhdan-Lusztig polynomials for hermitian symmetric spaces}, Trans.~Amer. Math.~Soc. {\bf
    309}(1988), 279--294.
\bibitem[BraMac01]{Braden} T.~Braden and R.~MacPherson, \emph{From moment graphs to intersection cohomology},
Math. Ann. {\bf 321}(2001), no.~3, 533--551.
  \bibitem[Bre98]{Brenti} F.~Brenti, \emph{Lattice paths and Kazhdan-Lusztig polynomials}, J.~Amer.~Math.~Soc., {\bf 11}(1998), 229--259.
\bibitem[Bre94]{Brenti} \bysame, \emph{Log-concave and unimodal sequences in algebra, combinatorics, and geometry: an update},
  Jerusalem combinatorics '93,  71--89, Contemp. Math., 178, Amer. Math. Soc., Providence, RI, 1994.
\bibitem[Bri03]{Brion} M.~Brion, \emph{Lectures on the geometry of flag varieties},
Notes de l'\'{e}cole d'\'{e}t\'{e} ``Schubert Varieties'' (Varsovie, 2003), 59 pages.
\bibitem[BruHer93]{Bruns.Herzog} W.~Bruns and J.~Herzog, \emph{Cohen-Macaulay rings}, Cambridge Studies in Advanced Mathematics, 39.
    Cambridge University Press, Cambridge, 1993. xii+403 pp.
\bibitem[Buc00]{Buch:KLR} A.~S.~Buch, \emph{A Littlewood-Richardson rule for the $K$-theory of Grassmannians},
Acta Math.~{\bf 189}(2002), no.~1, 37--78.
\bibitem[Ful92]{Fulton:Duke92} W.~Fulton, \emph{Flags, Schubert polynomials, degeneracy loci, and determinantal formulas}, Duke
    Math.~J.~{\bf 65} (1992), no.~3, 381--420.
\bibitem[IkeNar09]{Naruse} T.~Ikeda and H.~Naruse, \emph{Excited Young diagrams and equivariant Schubert calculus},
Trans.~Amer.~Math.~Soc.~{\bf 361}(2009), 5193--5221.
\bibitem[Irv88]{Irving} R.~Irving, \emph{The socle filtration of a Verma
module}, Ann.~Sci.~\'{E}cole.~Norm.~Sup.~series 4{\bf 21}(1988), no.~1, 47--65.
\bibitem[JonWoo10]{Jones.Woo} B.~Jones and A.~Woo, \emph{Kazhdan-Lusztig polynomials for cograssmannian permutations}, preprint, 2010.
\bibitem[KazLus80]{KL:proof} D.~Kazhdan and G.~Lusztig, \emph{Schubert varieties and Poincar\'{e} duality}, Proc.~Symp.~Pure.~Math., A.~M.~S., {\bf
    36}(1980), 185--203.
\bibitem[KazLus79]{Kazhdan.Lusztig} \bysame,
\emph{Representations of Coxeter Groups and Hecke Algebras},
Invent.~Math.~{\bf 53} (1979), 165--184.
\bibitem[Knu09]{Knutson:frob} A.~Knutson, \emph{Frobenius splitting, point
counting, and degeneration}, preprint, 2009. \textsf{arXiv:0911.4941}
\bibitem[KnuMil05]{KM:annals} A.~Knutson and E.~Miller, \emph{Gr\"{o}bner geometry of Schubert polynomials}, Ann.~of.~Math. (2) {\bf 161}(2005), no.~3, 1245--1318.
\bibitem[KnuMil04]{KM:subword} \bysame, \emph{Subword complexes in Coxeter groups}, Adv.~Math. {\bf 184}(2004), no.~1,
    161--176.
\bibitem[KnuMilYon09]{KMY:crelle} A.~Knutson, E.~Miller and A.~Yong, \emph{Gr\"{o}bner geometry of vertex decompositions and of flagged tableaux}, J.~Reine Angew.
    Math.~{\bf 630}(2009), 1--31.
\bibitem[KnuMilYon08]{KMY} \bysame, \emph{Tableau complexes}, Israel J.~Math., {\bf 163}(2008), 317--343.
\bibitem[KreRob05]{Kreuzer.Robbiano} M.~Kreuzer, L.~Robbiano, {Computational commutative algebra. 2}, Springer-Verlag, Berlin, 2005. x+586
    pp.
\bibitem[LakSan90]{Lakshmibai.Sandhya} V.~Lakshmibai and B.~Sandhya, \emph{Criterion
for smoothness of Schubert varieties in $SL(n)/B$}, Proc.~Indian Acad.~Sci.~Math.~Sci.~{\bf 100} (1990), no.~1, 45--52.
\bibitem[Las95]{Lascoux} A.~Lascoux, \emph{Polynomes de Kazhdan-Lusztig pour les varietes de Schubert vexillaires},
C.~R.~Acad.~Sci.~Paris Ser.~I Math. {\bf 321}(6), (1995), 667ö€Žà¤¡"1ऄ1¤770.
\bibitem[LasSch96]{treillis} A.~Lascoux and M.~-P. Sch\"{u}tzenberger, \emph{Treillis et bases des groupes de Coxeter}, Electron.~J.~Combin., {\bf 3}:2 (1996).
\bibitem[LasSch81]{LS:KL} \bysame,
\emph{Polynomes de Kazhdan $\&$ Lusztig pour les Grassmanniennes},
Ast\'{e}risque 87--88 (1981), 249--266.
\bibitem[LiYon10]{Li.Yong} L.~Li and A.~Yong, \emph{Some degenerations of Kazhdan-Lusztig polynomials
and multiplicities of Schubert varieties}, preprint 2010. \textsf{arXiv:1001.3437}
\bibitem[Man01a]{Manivel} L.~Manivel, \emph{Symmetric functions, Schubert polynomials and
degeneracy loci}, American Mathematical Society, Providence 2001.
\bibitem[Man01b]{Manivel:paper} \bysame, \emph{Generic singularities
of Schubert varieties}, preprint 2001. \textsf{arXiv:math.AG/0105239}.
\bibitem[MilStu04]{Miller.Sturmfels} E.~Miller and B.~Sturmfels, \emph{Combinatorial Commutative Algebra},
Graduate Texts in Mathematics Vol.~{\bf 227}, Springer-Verlag, New York, 2004.
\bibitem[Pol00]{Polo} P.~Polo, \emph{Construction of arbitrary Kazhdan-Lusztig polynomials},
Represent.~Theory  3  (1999), 90--104.
\bibitem[Ram85]{Raman} A.~Ramanathan, \emph{Schubert varieties are arithmetically Cohen-Macaulay},
Invent.~Math., {\bf 80}(1985), 283--294.
\bibitem[Rub05]{Rubey} M.~Rubey, \emph{The $h$-vector of a ladder determinantal ring
cogenerated by $2\times 2$ minors is log-concave}, J. Algebra {\bf 292}(2005), no. 2, 303--323.
\bibitem[ShiZin10]{Zinn} K.~Shigechi and
P.~Zinn-Justin, \emph{Path representation
of maximal parabolic Kazhdan-Lusztig polynomials}, preprint 2010. \textsf{arXiv.1001.1080}
\bibitem[Sta89a]{Stanley:unimodal}
R.~P.~Stanley, \emph{Log-concave and unimodal sequences in algebra, combinatorics, and geometry}.
Graph theory and its applications: East and West (Jinan, 1986), 500--535,
Ann. New York Acad. Sci., 576, New York Acad. Sci., New York, 1989.
\bibitem[WooYon09]{WYIII} A.~Woo and A.~Yong, \emph{A Gr\"{o}bner basis for Kazhdan-Lusztig ideals},
preprint 2009. \textsf{arxiv:0909.0564}
\bibitem[WooYon08]{WYII} \bysame, \emph{Governing singularities of Schubert varieties},
J.~Algebra, {\bf 320}(2008), no.~2, 495--520.
\end{thebibliography}
\end{document}